\documentclass[12pt]{amsart}
\usepackage{mathrsfs}
\usepackage{amssymb}

\usepackage{amscd}
\usepackage{amsmath, amssymb}
\usepackage{amsfonts}
\usepackage[colorlinks,linkcolor=blue,citecolor=blue, pdfstartview=FitH]{hyperref}

  \setlength{\textwidth}{5.6in} \setlength{\oddsidemargin}{0.3in}
\setlength{\evensidemargin}{0.3in}\setlength{\footskip}{0.3in}
\setlength{\headsep}{0.25in}
\numberwithin{equation}{section}

\theoremstyle{plain}
\newtheorem{thm}{Theorem}[section]
\newtheorem{theorem}[thm]{Theorem}
\newtheorem{lemma}[thm]{Lemma}
\newtheorem{corollary}[thm]{Corollary}
\newtheorem{proposition}[thm]{Proposition}

\theoremstyle{definition}

\newtheorem{remark}[thm]{Remark}

\newtheorem{definition}[thm]{Definition}

\newtheorem{example}[thm]{Example}

\newtheorem{defn-thm}[thm]{Definition-Theorem}

\newcommand{\sL}{{\mathcal L}}




\newcommand{\Ker}{{ Ker}}

\newcommand{\bp}{\bar{\partial}}

\newcommand{\btheorem}{\begin{theorem}}
\newcommand{\etheorem}{\end{theorem}}
\newcommand{\bproposition}{\begin{proposition}}
\newcommand{\eproposition}{\end{proposition}}
\newcommand{\bdefinition}{\begin{definition}}
\newcommand{\edefinition}{\end{definition}}
\newcommand{\bcorollary}{\begin{corollary}}
\newcommand{\ecorollary}{\end{corollary}}
\newcommand{\bproof}{\begin{proof}}
\newcommand{\eproof}{\end{proof}}
\newcommand{\bremark}{\begin{remark}}
\newcommand{\eremark}{\end{remark}}
\newcommand{\eexample}{\end{example}}
\newcommand{\bexample}{\begin{example}}

\newcommand{\elemma}{\end{lemma}}
\newcommand{\blemma}{\begin{lemma}}

\newcommand{\p}{\partial}

\renewcommand{\bar}{\overline}

\renewcommand{\phi}{\varphi}

\newcommand{\ee}{\end{eqnarray*}}
\newcommand{\be}{\begin{eqnarray*}}

\newcommand{\beq}{\begin{equation}}
\newcommand{\eeq}{\end{equation}}

\newcommand{\bd}{\begin{enumerate}}
\newcommand{\ed}{\end{enumerate}}

\renewcommand{\tilde}{\widetilde}




\usepackage{fancyhdr}
\pagestyle{fancy}


\begin{document}
\title{Solving equations with Hodge theory}
\makeatletter
\let\uppercasenonmath\@gobble
\let\MakeUppercase\relax
\let\scshape\relax
\makeatother

\author{Kefeng Liu}
\address{Kefeng Liu, School of Mathematics,
Capital Normal University, Beijing, 100048, China}
\address{Department of Mathematics, University of California at Los Angeles, California 90095}

\email{liu@math.ucla.edu}

\author{Shengmao Zhu}
\address{Shengmao Zhu, Center of Mathematical Sciences,
Zhejiang University, Hangzhou, 310027, China}

\email{szhu@zju.edu.cn, shengmaozhu@126.com} \maketitle
\begin{abstract} We treat two quite different problems related to changes of complex structures on K\"ahler manifolds by using global geometric method.
 First, by using operators from Hodge
theory on compact K\"ahler manifold, we present a closed explicit extension
formula for holomorphic  canonical forms in different complex
structures. As applications, we give a closed explicit formula for certain
canonical sections of Hodge bundles on marked and polarized moduli
spaces of projective manifolds, and provide a closed explicit extension formula for holomorphic pluricanonical
forms under certain natural conditions. Second, by using the
operators in $L^2$-Hodge theory on Poincar\'e disk, we present a
simple and unified method to solve the Beltrami equations with measurable coefficients for
quasi-conformal maps.
\end{abstract}

\maketitle 
%

\section{Introduction}
Let $M$ be a complex manifold of complex dimension
$\dim_{\mathbb{C}}M=n$, and $\varphi$ is a Beltrami differential
which is a tangent bundle valued $(0,1)$-form in $A^{0,1}(M,
T^{1,0}M)$. See Section \ref{subsection-Beltrami} for a detailed
discussion of complex structures and Beltrami differentials.  If the
Beltrami differential $\varphi$ is integrable in the sense that
$$\bp\phi =\frac{1}{2}[\phi, \phi],$$
then $\varphi$ determines a new complex structure on $M$, denoted by
$M_\phi$.

Given any differential form $\sigma$ on $M$, let us denote by
$\phi\lrcorner \sigma= i_\phi\sigma= \phi \sigma$ the natural
contraction morphism throughout this paper, and we define a map
\begin{align} \label{exponentialphi}
\rho_{\varphi}(\sigma)= e^{i_\phi}\sigma= \sum_{k\geq
0}\frac{1}{k!}i_{\varphi}^k\sigma.
\end{align}
Then $\rho_\varphi$ gives a bijection from $A^{n,0}(M)$ to
$A^{n,0}(M_\varphi)$. Therefore, given a smooth $(n,0)$-form
$\Omega$ on complex manifold $M$, $\rho_\varphi(\Omega)$ is a
$(n,0)$-form on $M_\phi$. We will show that
$\rho_\varphi(\Omega)=e^{i_\phi}\Omega$  is holomorphic  on $M_\phi$
if and only if
\begin{align} \label{obstructionequation-intro}
\bp\Omega+\p(\phi\lrcorner \Omega)=0.
\end{align}

In this paper we present a global geometric method to study the changes of complex structures on K\"ahler manifolds and discuss the related geometric and analytic applications without using the local deformation theory as developed by Kodaira-Spencer. The following is a brief outline of the method and main results of
the paper.
\subsection{Extension formula for holomorphic canonical forms on compact K\"ahler manifold}
We consider compact K\"ahler manifold $(M,\omega)$ of complex
dimension $n$ with K\"ahler form $\omega$. Let
$\varphi\in A^{0,1}(M,T^{1,0}M)$ be an integrable Beltrami
differential. Given a holomorphic $(n,0)$-form $\Omega_0$ on $M$,
i.e. a holomorphic section of the canonical bundle $K_M$, our first goal
is to construct a holomorphic $(n,0)$-form $\Omega(\varphi)$ on
$M_\varphi$, such that $\Omega(0)=\Omega_0$. We construct such
$\Omega(\varphi)$ from the solution of extension equation
(\ref{obstructionequation-intro}), which will be solved by using
Hodge theory on $(M,\omega)$.

More precisely, we introduce the operator
$$ T=\bp^*G\p$$
where $d=\p+\bp$, with $\p$ and $\bp$ the $(1,0)$ and $(0,1)$
differentials, $\p^*$ and $\bp^*$ the corresponding adjoint
operators, and $G$ denotes the Green operator of the Laplacian
operator $\Box_{\bp}$. Then we will show that, for any $g\in
A^{p,q}(M)$,
\begin{align} \label{quasiisometry-intro}
\|\bar{\partial}^* G\partial g\|^2\leq \|g\|^2,
\end{align}
where $\|\cdot \|$ denotes the $L^2$-norm induced by the K\"ahler
metric $\omega$. (\ref{quasiisometry-intro}) implies that $T$ is an
operator of norm $||T||\leq1$ on the Hilbert space of $L^2$-forms.
Let $\phi$ be a Beltrami differential with $L_\infty$-norm
$||\phi||_\infty<1$, then as a corollary we see that the operator
$I+T\phi$ is invertible, where $\phi$ is considered as a contraction
operator on the Hilbert space of $L^2$-forms.

We will show in Section \ref{Section-extensionforms} that
$$
\Omega=(I+T\varphi)^{-1}\Omega_0
$$
is a solution to the equation (\ref{obstructionequation-intro}).
Then, we obtain
\begin{theorem}\label{extensionprop-intro} Given an integrable Beltrami differential $\phi$
such that the $L_\infty$-norm $||\phi||_{\infty}<1$, and any
holomorphic $(n,0)$-form $\Omega_0$ on $M$, then
\begin{align} \label{extension-intro}
\Omega(\varphi)=\rho_\varphi((I+T\phi)^{-1}\Omega_0)
\end{align}
 is a holomorphic $(n,0)$-form on $M_\phi$. In particular,
 $\Omega(0)=\Omega_0$.
\end{theorem}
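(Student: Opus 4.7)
The strategy is to construct $\Omega := (I+T\phi)^{-1}\Omega_0$ as a smooth $(n,0)$-form on $M$ and then invoke the criterion stated in the introduction: $\rho_\phi(\Omega)$ is holomorphic on $M_\phi$ precisely when $\bp\Omega + \p(\phi\lrcorner\Omega)=0$. Everything reduces to verifying this extension equation for the explicit formula $(I+T\phi)^{-1}\Omega_0$, together with the trivial check that $\Omega(0)=\Omega_0$.

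First I would verify that $I+T\phi$ is invertible on the appropriate function space and preserves type. Since contraction with $\phi$ sends $A^{n,0}$ into $A^{n-1,1}$, then $\p$ lands in $A^{n,1}$, $G$ preserves bidegree, and $\bp^*$ returns us to $A^{n,0}$, the operator $T\phi$ preserves type $(n,0)$. The $L^2$ estimate $\|Tg\|\le\|g\|$ combined with $\|\phi\|_\infty<1$ yields $\|T\phi\|_{L^2\to L^2}<1$, so $I+T\phi$ is invertible by Neumann series. Bootstrapping via the identity $\Omega=\Omega_0 - \bp^*G\,\p(\phi\lrcorner\Omega)$ and the elliptic regularity of $G$ upgrades $\Omega$ to a smooth $(n,0)$-form on $M$.

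The crux is the verification of the extension equation. Applying $\bp$ to $(I+T\phi)\Omega=\Omega_0$ gives $\bp\Omega = -\bp\bp^*G\,\p(\phi\lrcorner\Omega)$; setting $\eta := \bp\Omega + \p(\phi\lrcorner\Omega)\in A^{n,1}(M)$, I would verify $\eta=0$ using the Hodge decomposition $I = H + \bp\bp^*G + \bp^*\bp G$ on $(n,1)$-forms. The harmonic piece $H\eta$ vanishes because $\bp\Omega$ and $\p(\phi\lrcorner\Omega)$ are $\bp$-exact and $\p$-exact respectively, and so pair trivially with harmonic $(n,1)$-forms (which on a K\"ahler manifold are both $\bp^*$- and $\p^*$-closed). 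For the $\bp\bp^*G$-component, the key observation is $\bp^*\Omega=0$ (since $A^{n,-1}=0$): then $\bp^* G\eta = G\bp^*\bp\Omega + \bp^*G\,\p(\phi\lrcorner\Omega) = (\Omega - H\Omega) + (\Omega_0 - \Omega) = \Omega_0 - H\Omega$, whose $\bp$ is zero because $\Omega_0$ is holomorphic and harmonic forms are $\bp$-closed.

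The hardest sub-step is $\bp^*\bp G\eta=0$. Using the K\"ahler identity $\bp^*\p + \p\bp^*=0$ and the commutation of $G$ with $\bp$, $\bp^*$, and $\p$, this expression rewrites as $\p G\bp^*\bp(\phi\lrcorner\Omega)$. To show it vanishes I would invoke the integrability condition $\bp\phi=\frac{1}{2}[\phi,\phi]$ together with the Leibniz-type identity $\bp(\phi\lrcorner\sigma)=(\bp\phi)\lrcorner\sigma+\phi\lrcorner\bp\sigma$ to reduce the residual term to a $\p$-closed expression whose $G$-image is annihilated by $\p$. Once the extension equation is established, the holomorphy criterion for $\rho_\phi$ supplies the desired holomorphic $(n,0)$-form on $M_\phi$; the identity $\Omega(0)=\Omega_0$ follows instantly from $T\phi\big|_{\phi=0}=0$ and $\rho_0=\mathrm{id}$.
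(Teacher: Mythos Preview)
Your overall architecture is right---reduce to the extension equation and study the ``defect'' $\eta=\bp\Omega+\p(\phi\lrcorner\Omega)$---and your first two Hodge components are handled correctly: $H\eta=0$ and $\bp\bp^*G\eta=0$ are fine, and together they give $\eta=\bp^*\bp G\eta$. The gap is in the third component. You propose to show $\bp^*\bp G\eta=0$ directly by ``reducing the residual term to a $\p$-closed expression whose $G$-image is annihilated by $\p$.'' This cannot be made to work: the term $\bp^*\bp G\eta$ is \emph{not} zero on its own, and no $\p$-closedness argument will kill it without already knowing $\eta=0$.

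What actually happens is this. Starting from your correct rewriting $\bp^*\bp G\eta=-\bp^*G\p\bigl(\bp(\phi\lrcorner\Omega)\bigr)$, the identity $\bp(\phi\lrcorner\Omega)=(\bp\phi)\lrcorner\Omega+\phi\lrcorner\bp\Omega$, the integrability $\bp\phi=\tfrac12[\phi,\phi]$, the generalized Cartan formula $\tfrac12[\phi,\phi]\lrcorner\Omega=\phi\lrcorner\p(\phi\lrcorner\Omega)-\tfrac12\p(\phi\lrcorner\phi\lrcorner\Omega)$ (using $\p\Omega=0$), and the substitution $\bp\Omega=\eta-\p(\phi\lrcorner\Omega)$ combine to give
\[
\bp(\phi\lrcorner\Omega)=\phi\lrcorner\eta-\tfrac12\,\p(\phi\lrcorner\phi\lrcorner\Omega).
\]
Hence $\bp^*\bp G\eta=-\bp^*G\p(\phi\lrcorner\eta)=-T\phi\,\eta$. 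Combined with your first two steps this yields the fixed-point relation $\eta=-T\phi\,\eta$, and \emph{now} the quasi-isometry estimate $\|T\|\le1$ together with $\|\phi\|_\infty<1$ forces $\eta=0$. This is exactly the paper's argument (Proposition~\ref{prop1-section}): one does not kill the last Hodge piece independently, but rather uses it to exhibit $\eta$ as a strict contraction of itself. Your sketch never invokes the norm estimate at this stage, which is the indispensable ingredient.
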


Therefore, (\ref{extension-intro}) is the explicit closed extension formula which we
are looking for. Note that the above construction is global in the
sense that it does not depend on the local deformation theory of
Kodaira-Spencer and Kuranishi. On the other hand, Theorem
\ref{extensionprop-intro} can be applied to the integrable Beltrami
differential $\phi(t)$ constructed from the
Kodaira-Spencer-Kuranishi deformation theory with $|t|\le
\varepsilon$ small, such that $\|\varphi(t)\|_{\infty}<1$.
\begin{corollary}\label{extensioncoro-intro}
Let $\pi: \mathcal{X}\rightarrow \Delta_\varepsilon
\subset{\mathbb{C}^m}$ be the Kuranishi family of compact K\"ahler
manifolds with $M_t=\pi^{-1}(t)=M_{\varphi(t)}$, where $t\in
\Delta_\varepsilon$. Given any holomorphic $(n,0)$-form $\Omega_0\in
A^{n,0}(M_0)$, we have that
\begin{align*}
\Omega(t)=\rho_t((I+T\varphi(t))^{-1}\Omega_0)
\end{align*}
 is a holomorphic $(n,0)$-form on $M_t$, where we denote by
 $\rho_t=\rho_{\varphi(t)}$. In particular, $\Omega(0)=\Omega_0$.
\end{corollary}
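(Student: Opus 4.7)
The plan is to reduce Corollary \ref{extensioncoro-intro} to a direct application of Theorem \ref{extensionprop-intro}, applied pointwise in the parameter $t \in \Delta_\varepsilon$. The Kuranishi theory produces a smooth (in fact real-analytic) family of Beltrami differentials $\varphi(t) \in A^{0,1}(M_0, T^{1,0}M_0)$ parametrized by $t$ in a small polydisk $\Delta_\varepsilon \subset \mathbb{C}^m$, each of which is integrable, i.e.\ satisfies $\bar\partial \varphi(t) = \tfrac{1}{2}[\varphi(t), \varphi(t)]$, with $\varphi(0) = 0$ and $M_t = M_{\varphi(t)}$. So the integrability hypothesis needed to invoke Theorem \ref{extensionprop-intro} comes for free.

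Next, I would verify the $L_\infty$ smallness hypothesis. Since $\varphi(t)$ depends continuously on $t$ in $L_\infty$-norm (indeed, in any reasonable topology coming from the Kuranishi construction) and $\varphi(0) = 0$, continuity at $t=0$ gives $\|\varphi(t)\|_\infty < 1$ whenever $|t|$ is sufficiently small; after possibly shrinking $\varepsilon$, this holds uniformly on $\Delta_\varepsilon$. Thus both hypotheses of Theorem \ref{extensionprop-intro} are satisfied by $\varphi(t)$ for every $t \in \Delta_\varepsilon$.

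Applying the theorem for each individual $t$ then gives that
\[
\Omega(t) = \rho_{\varphi(t)}\bigl((I + T\varphi(t))^{-1}\Omega_0\bigr) = \rho_t\bigl((I + T\varphi(t))^{-1}\Omega_0\bigr)
\]
is a holomorphic $(n,0)$-form on $M_{\varphi(t)} = M_t$, as required. The only remaining point is the normalization at $t = 0$: since $\varphi(0) = 0$, the operator $T\varphi(0)$ vanishes, so $(I + T\varphi(0))^{-1}\Omega_0 = \Omega_0$, and $\rho_0 = e^{i_0} = \mathrm{Id}$ on $A^{n,0}(M_0)$, whence $\Omega(0) = \Omega_0$.

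There is no real obstacle here; the corollary is essentially a specialization of Theorem \ref{extensionprop-intro} to a concrete family of Beltrami differentials. The only issue worth noting is the continuity of $t \mapsto \|\varphi(t)\|_\infty$, which justifies shrinking $\varepsilon$ to ensure the smallness hypothesis. Everything else is a direct substitution into the closed formula already established.
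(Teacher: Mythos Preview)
Your proposal is correct and follows essentially the same approach as the paper: the paper simply notes that the Kodaira--Spencer--Kuranishi construction provides an integrable Beltrami differential $\varphi(t)$ with $\|\varphi(t)\|_\infty<1$ for $|t|$ small, and then applies Theorem~\ref{extensionprop-intro} directly. Your additional remarks on the continuity of $t\mapsto\|\varphi(t)\|_\infty$ and the verification that $\Omega(0)=\Omega_0$ are welcome elaborations but do not depart from the paper's argument.
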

\begin{remark}
By the construction of the integrable Beltrami differential
$\varphi(t)$ in Kodaira-Spencer-Kuranishi deformaiton theory
\cite{MK71}, $\varphi(t)=\sum_{\mu\geq 1}\varphi_\mu(t)$, with
$\varphi_1(t)=\sum_{i=1}^m\eta_it_i$, where $\{\eta_i\}$ is a basis
for the harmonic space $\mathbb{H}^{1}(M,T^{1,0}M)$. Now the holomorphic
$(n,0)$ form on $M_t$ is given by $\Omega(t)$, whose first two terms
are
\begin{align} \label{twoterm-intro}
\Omega(t)=\Omega_0+\sum_{i=1}^m (\eta_i\lrcorner
\Omega_0-T(\eta_i\lrcorner \Omega_0))t_i+O(t^2).
\end{align}
From (\ref{twoterm-intro}), one can easily derive the curvature
formula of the $L^2$ metric on the corresponding Hodge bundle.
\end{remark}

\subsection{Closed formula for canonical section of Hodge bundle}
We then consider certain global canonical sections of holomorphic
forms on the moduli spaces of marked and polarized projective
manifolds of complex dimension $n$. Fixing a base point $M$ in the
moduli space $\mathcal{M}$ of marked and polarized projective
manifolds of complex dimension $n$, let $\omega_0$ be a K\"ahler
form on $M$. Here we only consider the connected component of the
 moduli space containing $M$.  Note that the de Rham cohomology group  gives
 us a trivial bundle $H^n(M)$ on $\mathcal{M}$ induced by the markings.

We will show that if the complex structure of any other point $M_1$
in $\mathcal{M}$ can be tamed by the same K\"ahler  form $\omega_0$
considered as a symplectic form, then there is a natural
construction of Beltrami differential $\phi$  on $M$ with its
$L_\infty$-norm $||\phi||_\infty<1$, such that $M_1=M_{\phi}$. By
using a theorem of Moser \cite{Moser} and the above operator $T$
from Hodge theory on $M$, we can deduce the following result which
gives a closed formula of a canonical section of the Hodge bundle of
holomorphic $n$-forms on $\mathcal{M}$.

\btheorem Given a point $(M,\omega_0)$ in the marked and polarized
moduli space $\mathcal{M}$ and a holomorphic $n$-form $s_0$ on $M$,
there is a canonical section $s$ of the Hodge bundle
$\mathcal{H}^{n,0}$, such that for any point $M_1$ in $\mathcal{M}$,
the de Rham cohomology class of $s$ in $H^n(M_1)$ is represented by
$$s = \rho_{\varphi}((I+T\phi)^{-1}s_0).$$
where $\phi$ is the Beltrami differential associated to $M_1$, and
$T=\bp^*G\p$ is the operator of the Hodge theory on $M$ with
K\"ahler metric $\omega_0$. \etheorem

\subsection{Extension formula for pluricanonical form}
Next we generalize the previous method to construct the extensions
of pluricanonical forms. Let $(M,\omega)$ be a compact K\"ahler
manifold of complex dimension $\dim_{\mathbb{C}}M=n$ with K\"ahler
form $\omega$, and $ \varphi\in A^{0,1}(M,T^{1,0}M)$ be an
integrable Beltrami differential. Let $m\geq 2$ be a fixed integer.
We consider a pluricanonical form $\sigma_0$ which is a holomorphic
section of $K_{M}^{\otimes m}$ over $M$, where $K_M$ denotes the
canonical line bundle of $M$. An important question is how to construct
the pluricanonical forms $\sigma(\varphi)$ on $M_\varphi$, such that
$\sigma(0)=\sigma_0$. In fact, for projective manifolds, the existence of extension was proved by Y.-T. Siu. In general  there is a famous conjecture due to
 Siu \cite{Siu}, about the invariance of plurigenera for compact
K\"ahler manifolds.

In our approach, Siu's conjecture is reduced to solving the
extension equation (\ref{extensionequ}). By using Hodge theory, we
provide a closed explicit formula for the solution of this extension equation
under certain conditions.

More precisely, let $(\mathcal{L},h)$ be an Hermitian holomorphic
line bundle over $(M,\omega)$.  Let $\nabla=\nabla'+\bp$ be the
Chern connection of $(\mathcal{L},h)$ with curvature $\Theta$.
We introduce the operator
\begin{align*}
T^{\nabla'}=\bp^* G \nabla'
\end{align*}
where $G$ is the Green operator associated to the Laplacian
$\overline{\square}=\bp\bp^*+\bp^*\bp$.

In particular,  we consider the holomorphic line bundle
$\mathcal{L}_M=K_{M}^{\otimes (m-1)}$ over  $(M,\omega)$ with the
induced Hermitian metric $h_\omega=\det(g)^{-(m-1)}$, where $g$
denotes the K\"ahler metric matrix associated to the K\"ahler form
$\omega$.

We establish the following result in Section
\ref{Section-pluricanonical}.
\begin{theorem} \label{introduction-theoremglobal}
Suppose $(\mathcal{L}_M,h_\omega)$ is a positive line bundle over a
compact K\"ahler manifold $(M,\omega)$  with curvature
$\sqrt{-1}\Theta=\rho\omega$ for a constant $\rho>0$, let
$\varphi\in A^{0,1}(M,T^{1,0}M)$ be an integrable Beltrami
differential satisfying two conditions $div\varphi=0$ and
$L_\infty$-norm $\|\varphi\|_\infty<1$. Then, for any holomorphic
pluricanonical form $\sigma_0\in A^{n,0}(M,\mathcal{L}_M)$,
\begin{align*}
\sigma(\varphi)=\rho_\varphi((I+T^{\nabla'}\varphi)^{-1}\sigma_0)
\end{align*}
is a holomorphic pluricanonical form in
$A^{n,0}(M_\varphi,\mathcal{L}_{M_\varphi})$.
\end{theorem}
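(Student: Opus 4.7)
The plan mirrors the proof of Theorem \ref{extensionprop-intro}, adapted to line-bundle-valued forms by replacing the scalar operator $T = \bar\partial^*G\partial$ with $T^{\nabla'} = \bar\partial^*G\nabla'$. I would proceed in three steps: (i) derive the pluricanonical analog of the extension equation (\ref{obstructionequation-intro}); (ii) verify that $\sigma := (I+T^{\nabla'}\varphi)^{-1}\sigma_0$ solves it; (iii) justify the invertibility of $I+T^{\nabla'}\varphi$ via an $L^2$ bound $\|T^{\nabla'}\|\leq 1$.

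Step (i) amounts to a local computation showing that $\rho_\varphi(\sigma)$ is a holomorphic section of $K_{M_\varphi}^{\otimes m}$ if and only if
\begin{equation*}
\bar\partial\sigma + \nabla'(\varphi\lrcorner\sigma) = 0,
\end{equation*}
the $(1,0)$-part $\nabla'$ of the Chern connection replacing $\partial$ because of the $K_M^{\otimes(m-1)}$-twist. For step (ii), the defining identity $\sigma+\bar\partial^*G\nabla'(\varphi\lrcorner\sigma)=\sigma_0$ combined with $\bar\partial\sigma_0=0$ gives $\bar\partial\sigma = -\bar\partial\bar\partial^*G\nabla'(\varphi\lrcorner\sigma)$. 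Applying the Hodge decomposition to $\nabla'(\varphi\lrcorner\sigma)\in A^{n,1}(M,\mathcal{L}_M)$ reduces the extension equation to two vanishing statements: the harmonic projection $H(\nabla'(\varphi\lrcorner\sigma))=0$, which follows from Kodaira vanishing $\mathbb{H}^{n,1}(M,\mathcal{L}_M)=0$ guaranteed by the strict positivity $\sqrt{-1}\Theta=\rho\omega>0$; and $\bar\partial^*\bar\partial G\nabla'(\varphi\lrcorner\sigma)=0$, equivalent (via $[\bar\partial,G]=0$ and the fact that a $\bar\partial$-exact harmonic form must vanish) to $\bar\partial\nabla'(\varphi\lrcorner\sigma)=0$. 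This last equality is verified using $\bar\partial\nabla'+\nabla'\bar\partial=\Theta$, the integrability $\bar\partial\varphi=\tfrac12[\varphi,\varphi]$, and the divergence-free condition $\operatorname{div}\varphi=0$, which together cancel the Leibniz terms against the curvature contribution.

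Step (iii) is the main technical hurdle, since the scalar estimate (\ref{quasiisometry-intro}) rested on two K\"ahler identities that must be reworked for the line bundle. First, $\bar\partial^*=-i[\Lambda,\nabla']$ combined with $(\nabla')^2=0$ still yields the anticommutation $\{\bar\partial^*,\nabla'\}=0$; since $G$ commutes with $\bar\partial^*$, we get $T^{\nabla'}g=-G\nabla'\bar\partial^*g$, mirroring the scalar identity $Tg=-\partial\bar\partial^*Gg$. Second, the K\"ahler identity $\square_\partial=\square_{\bar\partial}$ is replaced by the Bochner--Kodaira--Nakano formula $\bar\square-\square_{\nabla'}=[\sqrt{-1}\Theta,\Lambda]$; under $\sqrt{-1}\Theta=\rho\omega$, the Lefschetz identity $[\omega,\Lambda]|_{A^{p,q}}=(p+q-n)I$ simplifies the right-hand side to a scalar, producing the clean commutator $[\bar\square,\nabla']=\rho\nabla'$. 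Spectrally decomposing with respect to $\bar\square$, for a $\mu_k$-eigenvector $h_k$ one verifies $\|\nabla'\bar\partial^*h_k\|^2\leq (\mu_k+\rho)\mu_k\|h_k\|^2$, whence $\|T^{\nabla'}h\|^2\leq \sum|c_k|^2\tfrac{\mu_k}{\mu_k+\rho}\|h_k\|^2\leq \|h\|^2$. Combined with $\|\varphi\|_\infty<1$ this yields $\|T^{\nabla'}\varphi\|_{\mathrm{op}}<1$, so $I+T^{\nabla'}\varphi$ is invertible by Neumann series, and elliptic regularity ensures $\sigma$ is smooth, completing the proof.
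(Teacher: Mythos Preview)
Your step (ii) contains a genuine gap. You assert that $\bar\partial\nabla'(\varphi\lrcorner\sigma)=0$ can be verified directly from $\bar\partial\nabla'+\nabla'\bar\partial=\Theta$, integrability, and $\operatorname{div}\varphi=0$. But expanding the left side forces you through $\bar\partial(\varphi\lrcorner\sigma)=(\bar\partial\varphi)\lrcorner\sigma+\varphi\lrcorner\bar\partial\sigma$, and the term $\varphi\lrcorner\bar\partial\sigma$ involves $\bar\partial\sigma$, which is precisely the quantity you are trying to control. There is no cancellation of this term against curvature: for a generic smooth $\sigma$ the expression $\bar\partial\nabla'(\varphi\lrcorner\sigma)$ is simply nonzero, so your claimed identity cannot hold unconditionally. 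What is actually true (this is the paper's Lemma \ref{keylemma}, a local computation using integrability and $\operatorname{div}\varphi=0$) is
\[
\bar\partial\nabla'(\varphi\lrcorner\sigma)=-\nabla'\bigl(\varphi\lrcorner\Phi\bigr),\qquad
\Phi:=\bar\partial\sigma+\nabla'(\varphi\lrcorner\sigma).
\]
Feeding this into your own computation gives $\Phi=\bar\partial^*G\bar\partial\nabla'(\varphi\lrcorner\sigma)=-\bar\partial^*G\nabla'(\varphi\lrcorner\Phi)=-T^{\nabla'}(\varphi\lrcorner\Phi)$, and \emph{only then} does the quasi-isometry from step (iii) together with $\|\varphi\|_\infty<1$ force $\Phi=0$. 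In other words, the norm estimate is needed not just for invertibility of $I+T^{\nabla'}\varphi$ but a second time inside step (ii) to close the bootstrap; your proposed direct vanishing is circular.

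Two smaller remarks. First, in step (i) the extension equation for $\rho_\varphi(\sigma)$ to be holomorphic is in general $\bar\partial\sigma=-\nabla'(\varphi\lrcorner\sigma)+(m-1)\operatorname{div}\varphi\wedge\sigma$ (Proposition \ref{lemmaglobalequ}); it only reduces to your form after imposing $\operatorname{div}\varphi=0$, so that hypothesis already enters at stage (i), not just in the cancellation you cite. Second, your step (iii) is a legitimate alternative to the paper's argument: the paper proves $\|T^{\nabla'}\|\le 1$ by comparing the two Green operators $G$ and $G'$ via $\bar\square=\square'+\rho q$, whereas your commutator $[\bar\square,\nabla']=\rho\nabla'$ and the resulting eigenspace estimate $\|T^{\nabla'}|_{V_\mu}\|^2\le \mu/(\mu+\rho)$ give the same bound by spectral means. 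That part is fine once you group by eigenspaces of $\bar\square$ rather than by individual eigenvectors.
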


Note that Theorem \ref{introduction-theoremglobal}  is global in the
sense that it does not depend on the local deformation family.
Theorem \ref{introduction-theoremglobal} can be used to construct
the closed extension formula for pluricanonical forms of
K\"ahler-Einstein manifold of general type, see Definition
\ref{def-KEgeneraltype}.

Let $\pi: \mathcal{X}\rightarrow B_\varepsilon \subset{\mathbb{C}}$
be a holomorphic family of compact K\"ahler-Einstein manifolds of
general type. For $t\in B_\varepsilon$, we assume
$M_t=\pi^{-1}(t)=M_{\varphi(t)}$, where $\varphi(t)\in
A^{0,1}(M_0,T^{1,0}M_0)$ denotes an integrable Beltrami differential
satisfying the Kuranishi gauge $\bp^*\varphi(t)=0$.

 As an application of Theorem
\ref{introduction-theoremglobal}, we obtain
\begin{corollary} \label{invarianceofKE}
Given any holomorphic pluricanonical form $\sigma_0\in
A^{n,0}(M_0,\mathcal{L}_{M_0})$, then
\begin{align} \label{closedformKE}
\sigma(t)=\rho_t((I+T^{\nabla'}\varphi(t))^{-1}\sigma_0)
\end{align}
is a holomorphic pluricanonical form in
$A^{n,0}(M_t,\mathcal{L}_{M_t})$ with $\sigma(0)=\sigma_0$, where
$\rho_t=\rho_{\varphi(t)}$.
\end{corollary}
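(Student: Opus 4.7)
The plan is to apply Theorem \ref{introduction-theoremglobal} directly on each fiber $M_t$ of the Kuranishi family, with $\mathcal{L}_{M_0}=K_{M_0}^{\otimes(m-1)}$ equipped with the metric $h_{\omega_0}=\det(g_0)^{-(m-1)}$ induced by the K\"ahler--Einstein metric $\omega_0$ on $M_0$. The corollary then reduces to verifying that the hypotheses of that theorem hold for this choice of line bundle and for the family of Beltrami differentials $\varphi(t)$ produced by Kuranishi's construction in the gauge $\bp^*\varphi(t)=0$.

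First I would check the positivity assumption on $(\mathcal{L}_{M_0}, h_{\omega_0})$. By Definition \ref{def-KEgeneraltype} the metric $\omega_0$ satisfies $\mathrm{Ric}(\omega_0)=-\omega_0$. Since the curvature of $K_{M_0}$ with the induced metric equals $-\mathrm{Ric}(\omega_0)$, the curvature of $\mathcal{L}_{M_0}$ is $\sqrt{-1}\Theta=(m-1)\omega_0$, so the positivity condition holds with $\rho=m-1>0$ because $m\geq 2$.

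Next I would verify the three conditions on $\varphi(t)$. Integrability is built into the Kuranishi construction. After possibly shrinking $B_\varepsilon$, the bound $\|\varphi(t)\|_\infty<1$ follows from $\varphi(t)=O(t)$ in the power series expansion $\varphi(t)=\sum_{\mu\geq 1}\varphi_\mu(t)$. The essential point is $\mathrm{div}\,\varphi(t)=0$: the leading harmonic term $\varphi_1(t)$ is automatically divergence-free on a K\"ahler--Einstein manifold, as a consequence of the standard Bochner--Weitzenb\"ock identity relating $\bp^*$ and $\mathrm{div}$ on $A^{0,1}(M_0,T^{1,0}M_0)$ when $\mathrm{Ric}(\omega_0)=-\omega_0$. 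An induction on $\mu$ using the Maurer--Cartan equation $\bp\varphi(t)=\tfrac12[\varphi(t),\varphi(t)]$ together with the Kuranishi gauge should then propagate the divergence-free property from $\varphi_1(t)$ to every $\varphi_\mu(t)$, and hence to $\varphi(t)$.

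With all three conditions in place, Theorem \ref{introduction-theoremglobal} applies and delivers directly that $\sigma(t)=\rho_t((I+T^{\nabla'}\varphi(t))^{-1}\sigma_0)$ is a holomorphic pluricanonical form in $A^{n,0}(M_t,\mathcal{L}_{M_t})$. The equality $\sigma(0)=\sigma_0$ is immediate from $\varphi(0)=0$, which makes $T^{\nabla'}\varphi(0)=0$ and $\rho_0=\mathrm{Id}$. I expect the main technical obstacle to be the verification of $\mathrm{div}\,\varphi(t)=0$: the positivity and $L_\infty$-bounds follow routinely, but the divergence-free property is precisely where the K\"ahler--Einstein structure, rather than a general K\"ahler structure, must be exploited, and where the specific choice of Kuranishi gauge plays a decisive role.
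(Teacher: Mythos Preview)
Your overall strategy is correct and matches the paper's: verify the hypotheses of Theorem~\ref{introduction-theoremglobal} (equivalently Theorem~\ref{theoremglobal}) and apply it. The positivity check and the $L_\infty$-bound on $\varphi(t)$ are handled exactly as the paper does, up to a harmless factor of $2$ in the constant $\rho$: with the paper's convention $Ric(\omega)=\tfrac{\sqrt{-1}}{2}\bp\p\log\det g$, formula~(\ref{curvature-linebundle}) gives $\sqrt{-1}\Theta=-2(m-1)Ric(\omega_0)=2(m-1)\omega_0$, so $\rho=2(m-1)$ rather than $m-1$.

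The one point where you diverge from the paper is the verification of $\mathrm{div}\,\varphi(t)=0$. You propose an induction on the homogeneous pieces $\varphi_\mu(t)$, feeding the Maurer--Cartan equation and the Kuranishi gauge through a Bochner--Weitzenb\"ock identity. The paper instead invokes Proposition~\ref{gaugeequiv} (from \cite{Sun}), which asserts that on a K\"ahler--Einstein manifold of general type an \emph{integrable} Beltrami differential satisfies $\bp^*\varphi=0$ if and only if $\mathrm{div}\,\varphi=0$. Since $\varphi(t)$ is integrable and in Kuranishi gauge, this gives $\mathrm{div}\,\varphi(t)=0$ in one stroke, with no induction needed. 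Your inductive route is plausible but is not carried out and would in any case amount to reproving the content of Proposition~\ref{gaugeequiv} term by term; the paper's direct appeal to that proposition is both shorter and avoids the ``main technical obstacle'' you flag.
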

Corollary \ref{invarianceofKE} implies the invariance of plurigenera
for K\"ahler-Einstein manifolds of general type, which has been
obtained in \cite{Sun}. Formula (\ref{closedformKE}) provides a simple
closed explicit formula for the extension of pluricanonical form.

\subsection{Solving the Beltrami equation} Beltrami equation is very important in the
development of complex analysis and moduli theory of Riemann surfaces. It also has many
important applications in other subjects. See, for examples
\cite{Ahlfors}, \cite{Bojarski} and \cite{Gut}.

Given a  measurable function  $\mu_0$ on the unit disc $D\subset
\mathbb{C}$, suppose $\mathrm{sup}\,|\mu_0|<1$, let $\mu=\mu_0
\frac{\p}{\p z} \otimes d\bar{z}$ be a Beltrami differential on $D$
with coordinate $z$.  Recall that solving the Beltrami equation is
to find a function $f$ on the unit disc $D$, such that $$\bp f  =
\mu\p f.$$ Our observation is that the Beltrami equation can be
solved by using the $L^2$-Hodge theory. We will show in Section
\ref{Section-L2} that the $L^2$-Hodge theory holds on disk $D$ with
the Poinc\'are metric $\omega_P$. So we also have the operator
$T=\bp^*G\p$ with norm $\|T\|\leq 1$.

Note that the $L_\infty$-norm of $\mu$ is independent of Hermitian
metric on $D$ and is equal to $\mathrm{sup}\, |\mu_0|$, i.e.
$\|\mu\|_\infty<1$.  Similarly, we show that for a holomorphic
 one form $h_0$ on $D$, the equation  $$\bp h =- \p \mu h$$ has a solution
$$h= (I + T\mu)^{-1}  h_0.$$
As a corollary we can directly get the a solution
of the Beltrami equation for any measurable $\mu_0$. In particular we have,

\btheorem  Assume that $||\mu||_\infty=\mathrm{sup}\, |\mu_0|<1$, if
$\mu_0$ is of regularity $C^k$,  then the Beltrami equation
$$\bp f  = \mu\p f$$ has a solution $w(z)$ of regularity $C^{k+1}$.
\etheorem

The rest of this paper is organized as follows.  In Section
\ref{Section-extensionequation}, we will first review the basics of
operators on differential forms, Beltrami differentials and
extension equations which are needed for our discussions. Then in
Section \ref{Section-Hodgetheory}, we briefly review the Hodge
theory on compact K\"ahler manifold, introduce the operator $T$ and
discuss the quasi-isometry formula.  In Section
\ref{Section-extensionforms} we write down and prove a closed
formula for extension of holomorphic canonical form on compact
K\"ahler manifold in new complex structures. In Section
\ref{Section-moduli} we present a closed formula for a global
canonical section of the Hodge bundle of holomorphic form on moduli space.

In Section \ref{Section-pluricanonical}, we generalize the methods
in Sections \ref{Section-Hodgetheory}, \ref{Section-extensionforms}
to construct the closed explicit formula for extension of holomorphic
pluricanonical forms under certain conditions.

In Section \ref{Section-L2} we review and prove a few basic facts of
$L^2$-Hodge decomposition theory. In particular we will present a
general result from \cite{Gromov} that $L^2$-Hodge theory holds on
the universal cover of a K\"ahler hyperbolic manifold which is
possibly known and of independent interest for other applications,
although in this paper we only need the case when the universal
cover is the unit ball with standard Poincar\'e hyperbolic metric.
In this section we will also discuss briefly the relationship
between the $L^2$-Hodge theory and the $L^2$-estimate of
H\"ormander. We show that $L^2$-Hodge theory is more general and
implies the $L^2$-estimate. In Section \ref{Beltrami} we apply the
results in  Section \ref{Section-L2} to solve the Beltrami
equations. In Section \ref{general} we discuss various applications
and extensions of our method.

\textbf{Acknowledgements.} This paper grew out of several lectures the
first author presented in the complex geometry seminar we organized
in School of Mathematical Science, Capital Normal University during the academic year
 2016-2017. The first author would like to thank all the
participants of the seminar for their interest. The research of the
first author is supported by NSFC (Grant No. 11531012) and NSF. The
second author would like to thank CSC to support his visiting in
UCLA.

\section{Extension equations} \label{Section-extensionequation}
In this section, we first review some basic results about the
operators on differential forms following \cite{LRY}. Then we
introduce the definitions of Beltrami differentials and
integrability condition. We derive the extension equation for
constructing the holomorphic canonical form on the new complex manifold
$M_\varphi$, which is determined by an integrable Beltrami
differential $\varphi$.
\subsection{Generalized Cartan formulas}
Let $M$ be a complex manifold of dimension $n$. Let $\varphi\in
A^{0,k}(M,T^{1,0}M)$ be a $T^{1,0}M$-value $(0,k)$-form. We
introduce the contraction operator
\begin{align*}
i_{\varphi}: A^{p,q}(M)\rightarrow A^{p-1,q+k}(M)
\end{align*}
as in \cite{LRY}. If we write $\varphi=\eta\otimes Y$ with $\eta\in
A^{0,k}(M)$ and $Y\in C^{\infty}(T^{1,0}M)$, then for $\sigma\in
A^{p,q}(M)$,
\begin{align*}
i_{\varphi}(\sigma)=\eta\wedge i_Y\sigma.
\end{align*}
Sometimes, we also use the notations $\varphi\lrcorner
\eta=\varphi\eta$ to denote the contraction $i_{\varphi} \eta$
alternatively. By definition, we have
\begin{align*}
i_{\varphi}i_{\varphi'}=(-1)^{(k+1)(k'+1)}i_{\varphi'}i_{\varphi}
\end{align*}
if $\varphi\in A^{0,k}(M)$ and $\varphi' \in A^{0,k'}(M)$. The Lie
derivation of $\mathcal{\varphi}$ is defined by
\begin{align*}
\mathcal{L}_{\varphi}=(-1)^kd\circ i_\varphi+i_\varphi \circ d
\end{align*}
which can be decomposed into the sum of two parts
\begin{align*}
\mathcal{L}_{\varphi}^{1,0}=(-1)^k\partial \circ i_\varphi
+i_\varphi \circ \partial, \ \
\mathcal{L}_{\varphi}^{0,1}=(-1)^k\bar{\partial}\circ i_\varphi
+i_\varphi \circ\bar{\partial}.
\end{align*}
The Lie bracket of $\varphi$ and $\varphi'$ is defined by
\begin{align*}
[\varphi,\varphi']=\sum_{i,j=1}^n\left(\varphi^i \wedge
\partial_i \varphi'^j-(-1)^{kk'}\varphi'^i\wedge \partial_i
\varphi^j\right)\otimes \partial_j,
\end{align*}
if  $\varphi=\sum_{i}\varphi^i\partial_{i}\in A^{0,k}(M,T^{1,0}M)$
and $\varphi'=\sum_{i}\varphi'^i\partial_{i}\in
A^{0,k'}(M,T^{1,0}M)$.

We have the following generalized Cartan formula \cite{LR,LRY} which
can be proved by direct computations.
\begin{lemma}
For any $\varphi, \varphi'\in A^{0,1}(M,T_M^{1,0})$, then on $
A^{*,*}(M)$,
\begin{align} \label{genralizedcartan}
i_{[\varphi,\varphi']}=\mathcal{L}_{\varphi}\circ
i_{\varphi'}-i_{\varphi'}\circ\mathcal{L}_{\varphi},
\end{align}
\end{lemma}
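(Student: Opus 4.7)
The plan is to reduce the identity, which is $\mathbb{C}$-bilinear in $\varphi$ and $\varphi'$ and local in nature, to the decomposable case $\varphi = \eta\otimes X$ and $\varphi' = \eta'\otimes Y$, with $\eta,\eta' \in A^{0,1}(M)$ and $X, Y$ smooth sections of $T^{1,0}M$. In this form the identity becomes essentially the classical Cartan formula $i_{[X,Y]} = \mathcal{L}_X i_Y - i_Y\mathcal{L}_X$ for ordinary vector fields, decorated by wedge products with the $(0,1)$-factors; the substance of the proof is the careful tracking of Koszul signs.

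First I would establish two preliminary identities. Writing $X = X^i\partial_i$ and $Y = Y^j\partial_j$, the coordinate formula given in the excerpt for the Lie bracket, together with the observation $X^i\partial_i\eta' = \mathcal{L}_X\eta'$ (which follows from $i_X\eta' = 0$ and the classical Cartan formula, noting that $i_X$ kills $\bar\partial\eta'$), yields after collecting terms
\begin{equation*}
[\varphi,\varphi'] = \eta\wedge\mathcal{L}_X\eta'\otimes Y - \mathcal{L}_Y\eta\wedge\eta'\otimes X + \eta\wedge\eta'\otimes[X,Y].
\end{equation*}
Next, expanding $\mathcal{L}_\varphi\sigma = -d(\eta\wedge i_X\sigma) + \eta\wedge i_X d\sigma$ by the graded Leibniz rule and applying the classical Cartan formula $\mathcal{L}_X = di_X + i_Xd$ gives the operator identity
\begin{equation*}
\mathcal{L}_\varphi = -d\eta\wedge i_X + \eta\wedge\mathcal{L}_X,
\end{equation*}
and analogously for $\varphi'$.

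With these ingredients in hand, I would apply both sides of the desired identity to an arbitrary $\sigma\in A^{p,q}(M)$ and expand. The terms of the form $\eta\wedge\eta'\wedge i_Y\mathcal{L}_X\sigma$ arising from $\mathcal{L}_\varphi i_{\varphi'}\sigma$ and $i_{\varphi'}\mathcal{L}_\varphi\sigma$ cancel in pairs, while the terms containing $d\eta$ wedged with $i_Xi_Y\sigma$ cancel after using $i_Xi_Y = -i_Yi_X$ together with the parity of $d\eta$. The three surviving contributions match the expansion of $i_{[\varphi,\varphi']}\sigma$ from the bracket formula above, once we invoke the vector-field identity $i_{[X,Y]} = \mathcal{L}_X i_Y - i_Y\mathcal{L}_X$ on the $\eta\wedge\eta'$-term and use $\mathcal{L}_Y\eta = i_Y d\eta$ (valid because $i_Y\eta = 0$) to identify the $\mathcal{L}_Y\eta\wedge\eta'\wedge i_X\sigma$ contribution with the remaining $\eta'\wedge i_Y d\eta\wedge i_X\sigma$ term.

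The main obstacle is the sign bookkeeping: $\eta$ and $\eta'$ have odd parity, each $i_X$ and $i_Y$ is an antiderivation of degree $-1$, and $d\eta$ has even parity, so every commutation introduces a Koszul sign that must be tracked through the computation. A secondary, mostly cosmetic issue is that the bracket formula in the excerpt is written in local holomorphic coordinates, but since the final identity is tensorial, a local verification is sufficient.
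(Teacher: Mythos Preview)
Your approach is correct. The paper does not actually give a proof of this lemma; it merely cites \cite{LR,LRY} and remarks that the identity ``can be proved by direct computations.'' Your reduction to decomposable elements $\varphi=\eta\otimes X$, $\varphi'=\eta'\otimes Y$, together with the two preliminary identities you derive (the expansion of $[\varphi,\varphi']$ and the formula $\mathcal{L}_\varphi=-d\eta\wedge i_X+\eta\wedge\mathcal{L}_X$), is a clean and standard way to carry out such a direct computation, and the sign bookkeeping you outline goes through.

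One small wording imprecision: the terms $\eta\wedge\eta'\wedge\mathcal{L}_X i_Y\sigma$ from $\mathcal{L}_\varphi i_{\varphi'}\sigma$ and $\eta\wedge\eta'\wedge i_Y\mathcal{L}_X\sigma$ from $i_{\varphi'}\mathcal{L}_\varphi\sigma$ do not ``cancel in pairs'' but rather combine (after the subtraction) into $\eta\wedge\eta'\wedge i_{[X,Y]}\sigma$ via the classical Cartan identity, exactly as you note a few lines later. This is purely a matter of phrasing in your sketch, not a gap in the argument.
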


Let $\sigma\in A^{*,*}(M)$. By applying the formula
(\ref{genralizedcartan}) to $\sigma$ and considering the types,  we
immediately obtain
\begin{align} \label{genralizedcartan_special}
[\varphi,\varphi]\lrcorner \sigma=2\varphi\lrcorner
\partial\varphi\lrcorner \sigma -\partial(\varphi\lrcorner
\varphi\lrcorner\sigma)-\varphi\lrcorner\varphi\lrcorner
\partial\sigma.
\end{align}

\subsection{Beltrami differentials} \label{subsection-Beltrami}
In this section, $M$ is a complex manifold with
$\dim_{\mathbb{C}}M=n$, and we denote by $X$ the underlying real
manifold of $M$ of real dimension $2n$. The associated almost
complex structure of the complex manifold $M$ gives a direct sum
decomposition of the complexified tangent bundle,
\begin{align*}
T_{\mathbb{C}}X=T^{1,0}M\oplus T^{0,1}M.
\end{align*}
Let $J$ be another almost complex structure on $X$. Then, $J$ gives
another direct sum decomposition,
\begin{align*}
T_{\mathbb{C}}X=T^{1,0}M_{J}\oplus T^{0,1}M_J.
\end{align*}
Denote by
\begin{align*}
\iota_1: T_{\mathbb{C}}X\rightarrow T^{1,0}M, \  \iota_2:
T_{\mathbb{C}}X\rightarrow T^{0,1}M,
\end{align*}
the two projection maps.

\begin{definition} [cf. Definition 4.2 \cite{Kir} ]
Let $J$ be an almost complex structure on $X$, we say that $J$ is of
finite distance from the given complex structure $M$ on $X$, if the
restriction map
\begin{align*}
\iota_1|_{T^{1,0}M_J}: T^{1,0}M_J\rightarrow T^{1,0}M
\end{align*}
is an isomorphism.
\end{definition}
Therefore, if $J$ is of finite distance from $M$, one can define a
map
\begin{align*}
\bar{\varphi}: T^{1,0}M\rightarrow T^{0,1}M
\end{align*}
by setting $$\bar{\varphi}(v)=-\iota_2 \circ
\left(\iota_1|_{T^{1,0}M_J}\right)^{-1}(v).$$This map is
well-defined since $\iota_1|_{T^{1,0}M_J}$ is an isomorphism. It is
clear that
\begin{align*}
T^{1,0}M_J=\{v-\bar{\varphi}(v)|v\in T^{1,0}M \},\
T^{0,1}M_J=\{v-\varphi(v)|v\in T^{0,1}M \},
\end{align*}
and their corresponding dual spaces are
\begin{align} \label{dualspace}
\Lambda^{1,0}M_J=\{w+\varphi(w)|w\in \Lambda^{1,0}M \},\
\Lambda^{0,1}M_J=\{w+\bar{\varphi}(w)|w\in \Lambda^{0,1}M \}.
\end{align}
In this way, $\varphi$ determines a $T^{1,0}M$-valued $(0,1)$-form
which is also denoted by $\varphi\in A^{0,1}(M,T^{1,0}M)$ for
convenience. By the condition
\begin{align*}
T^{1,0}M\oplus T^{0,1}M=T_{\mathbb{C}}X=T^{1,0}M_J\oplus T^{0,1}M_J,
\end{align*}
the transformation matrix
\begin{equation*}
\left(
\begin{array}{ll}
I_n & -\bar{\varphi}  \\
-\varphi & I_n
\end{array}\right)
\end{equation*}
from a basis of $T^{1,0}M\oplus T^{0,1}M$ to a basis of
$T^{1,0}M_J\oplus T^{0,1}M_J$ must be nondegenerate. Therefore $
\det(I_n-\varphi\bar{\varphi})\neq 0. $  In fact, we have
\begin{proposition} [cf. Proposition 4.3  \cite{Kir}] \label{propfinitedistance}
There is a bijective correspondence between the set of almost
complex structures of finite distance from $M$ and the set of all
$\varphi\in A^{0,1}(M,T^{1,0}M)$ such that, at each point $p\in X$,
the map $\varphi\bar{\varphi}$ does not have eigenvalue 1.
\end{proposition}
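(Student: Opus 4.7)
The plan is to establish the bijective correspondence by constructing maps in both directions, with the eigenvalue condition governing exactly when each construction succeeds, and then verifying they are mutual inverses.

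First I would handle the forward direction. Starting from an almost complex structure $J$ of finite distance from $M$, the Beltrami differential $\varphi$ is already defined in the discussion preceding the statement via $\bar{\varphi}(v) = -\iota_2 \circ (\iota_1|_{T^{1,0}M_J})^{-1}(v)$. The content is to check the eigenvalue condition. Since the decomposition $T_{\mathbb C}X = T^{1,0}M_J \oplus T^{0,1}M_J$ gives a basis obtained from a $T^{1,0}M \oplus T^{0,1}M$ basis via the block matrix $\left(\begin{smallmatrix} I_n & -\bar{\varphi} \\ -\varphi & I_n \end{smallmatrix}\right)$, this matrix must be invertible at every point. Its determinant equals $\det(I_n - \bar{\varphi}\varphi) = \det(I_n - \varphi\bar{\varphi})$ (conjugate transpose), so $\varphi\bar{\varphi}$ cannot have eigenvalue $1$ at any $p \in X$.

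Next I would do the reverse direction, which is the core of the argument. Given $\varphi \in A^{0,1}(M, T^{1,0}M)$ satisfying the eigenvalue condition, define the two pointwise subspaces
\begin{equation*}
V_J^{1,0} = \{v - \bar{\varphi}(v) : v \in T^{1,0}M\}, \quad V_J^{0,1} = \{w - \varphi(w) : w \in T^{0,1}M\}.
\end{equation*}
I would verify: (i) these are complex conjugates of each other, so declaring $V_J^{1,0}$ to be the $+i$-eigenspace of a real endomorphism $J$ of $TX$ unambiguously defines an almost complex structure; (ii) the sum $V_J^{1,0} + V_J^{0,1}$ is direct and fills $T_{\mathbb C}X$, which follows from invertibility of the same block matrix, equivalently from $\det(I_n - \varphi\bar{\varphi}) \neq 0$; (iii) the projection $\iota_1$ restricted to $V_J^{1,0}$ sends $v - \bar{\varphi}(v) \mapsto v$ and is therefore an isomorphism onto $T^{1,0}M$, so $J$ is of finite distance from $M$.

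Finally I would verify that the two constructions are mutually inverse. Starting with $\varphi$, building $J$ as above, and extracting $\varphi'$ via the forward construction, I compute $(\iota_1|_{V_J^{1,0}})^{-1}(v) = v - \bar{\varphi}(v)$ so $-\iota_2$ of this is $\bar{\varphi}(v)$, giving $\varphi' = \varphi$. Conversely, starting with $J$, producing $\varphi$, and then reconstructing, the spaces $V_J^{1,0}$ and $V_J^{0,1}$ recovered from $\varphi$ coincide with the original $T^{1,0}M_J$ and $T^{0,1}M_J$ by construction of $\varphi$ together with the identity \eqref{dualspace}. No step is genuinely hard; the main bookkeeping obstacle is checking that $V_J^{1,0}$ and $V_J^{0,1}$ are genuine complex conjugates (so that one really obtains a real almost complex structure rather than just a $\pm i$-splitting of the complexified tangent bundle), which amounts to observing that $\overline{v - \bar{\varphi}(v)} = \bar v - \varphi(\bar v)$ by definition of $\varphi$ as the conjugate of $\bar{\varphi}$.
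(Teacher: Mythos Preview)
Your proof is correct and complete. Note, however, that the paper does not actually supply a proof of this proposition: it is stated with a citation to Kiremidjian \cite{Kir}, and the text surrounding it only sets up the forward direction (defining $\bar\varphi$ from $J$ and observing that the transition matrix $\left(\begin{smallmatrix} I_n & -\bar\varphi \\ -\varphi & I_n \end{smallmatrix}\right)$ must be nondegenerate, hence $\det(I_n - \varphi\bar\varphi)\neq 0$). Your argument fills in exactly what the paper omits, namely the reverse construction of $J$ from $\varphi$ via the graph subspaces $V_J^{1,0}$, $V_J^{0,1}$, the verification that these are conjugate and span $T_{\mathbb C}X$ under the eigenvalue hypothesis, and the check that the two maps are mutual inverses. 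This is the standard and expected route, and it is entirely consistent with the paper's setup.
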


\begin{definition}
If $\varphi\in A^{0,1}(M,T^{1,0}M)$ satisfies the condition in
Proposition \ref{propfinitedistance}, we say that $\varphi$ is a
Beltrami differential. If $\varphi$ satisfies the integrability
condition
$$\bar{\partial}\varphi=\frac{1}{2}[\varphi,\varphi],$$ we call
$\varphi$ an integrable Beltrami differential.
\end{definition}
So a Beltrami differential $\varphi$ determines an almost complex
structure of finite distance from $M$. We denote the corresponding
almost complex structure (i.e. almost complex manifold) by
$M_\varphi$. An integrable Beltrami differential $\varphi$ gives a
new complex structure  on $X$ by the Newlander-Nirenberg theorem
\cite{NN}, the corresponding complex manifold is denoted by
$M_\varphi$.

\subsection{Extension equations} \label{subsection-extension}
Given a Beltrami differential $\varphi$, for any $x\in X$, we can
pick a local holomorphic coordinate $(U,z^1,...,z^n)$ near $x$. Then
by $(\ref{dualspace})$,
\begin{align*}
\Lambda^{1,0}_x(M_{\varphi})=\text{Span}_{\mathbb{C}}\{dz^1+\varphi
dz^1,...,dz^n+\varphi dz^n\},
\end{align*}
and for any $1\leq p\leq n$,
\begin{align*}
\Lambda^{p,0}_x(M_{\varphi})=\text{Span}_{\mathbb{C}}\{(dz^{i_1}+\varphi
dz^{i_1})\wedge\cdots \wedge (dz^{i_p}+\varphi dz^{i_p})|1\leq
i_1<\cdots< i_p\leq n\}.
\end{align*}

Considering  the operator $e^{i_\varphi}$ defined by formula
(\ref{exponentialphi}), through a straightforward computation we
obtain
\begin{align*}
e^{i_\varphi}(dz^{i_1}\wedge \cdots \wedge dz^{i_p})=
(dz^{i_1}+\varphi dz^{i_1})\wedge \cdots\wedge (dz^{i_p}+\varphi
dz^{i_p})\in \Lambda_x^{p,0}(M_\varphi).
\end{align*}
Therefore, $e^{i_\varphi}$ is a map from $A^{p,0}(M)$ to
 $A^{p,0}(M_\varphi)$.
Moreover, it is easy to check that $$e^{i_\varphi}:\,
A^{p,0}(M)\rightarrow  A^{p,0}(M_\varphi) $$ is a bijection.

By the generalized Cartan formula (\ref{genralizedcartan}), it
follows that
\begin{proposition} \cite{Clemens,LRY}
Let $\varphi \in A^{0,1}(M,T^{1,0}M)$, then on $A^{*,*}(M)$, we have
\begin{align} \label{commuted}
e^{-i_\varphi}\circ d\circ
e^{i_\varphi}=d-\mathcal{L}_{\varphi}-i_{\frac{1}{2}[\varphi,\varphi]}.
\end{align}
\end{proposition}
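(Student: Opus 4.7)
The plan is to treat this as a BCH-type identity for the conjugation of $d$ by the exponential $e^{i_\varphi}$, viewed as a formal power series in the linear operator $i_\varphi$ acting on the graded vector space $A^{*,*}(M)$. Setting $F(t) = e^{-t\, i_\varphi} \circ d \circ e^{t\, i_\varphi}$, one has $F'(t) = [F(t), i_\varphi]$ in the ordinary operator commutator, so
\[
e^{-i_\varphi} \circ d \circ e^{i_\varphi} \;=\; \sum_{n \geq 0} \frac{1}{n!}\,\mathrm{ad}_{i_\varphi}^{\,n}(d),
\]
where $\mathrm{ad}_{i_\varphi}(X) = [X, i_\varphi] = X\, i_\varphi - i_\varphi\, X$. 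The problem then reduces to computing the iterated commutators and showing that the series truncates at $n = 2$.

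For $n = 1$, the definition of $\mathcal{L}_\varphi$ specialized to form-degree $k = 1$ reads $\mathcal{L}_\varphi = -d\, i_\varphi + i_\varphi\, d$, hence $[d, i_\varphi] = -\mathcal{L}_\varphi$; this is exactly the $-\mathcal{L}_\varphi$ term on the right-hand side. For $n = 2$, I would invoke the generalized Cartan identity of the preceding lemma with $\varphi' = \varphi$, namely $i_{[\varphi,\varphi]} = \mathcal{L}_\varphi\, i_\varphi - i_\varphi\, \mathcal{L}_\varphi$, to obtain
\[
[[d, i_\varphi], i_\varphi] \;=\; [-\mathcal{L}_\varphi, i_\varphi] \;=\; -\,i_{[\varphi,\varphi]},
\]
which, weighted by $1/2!$, produces the remaining term $-\,i_{\frac{1}{2}[\varphi,\varphi]}$.

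It then suffices to kill all higher brackets. Since $[\varphi,\varphi] \in A^{0,2}(M, T^{1,0}M)$ and $\varphi \in A^{0,1}(M, T^{1,0}M)$, the graded-commutativity rule $i_{\varphi}\, i_{\varphi'} = (-1)^{(k+1)(k'+1)} i_{\varphi'}\, i_{\varphi}$ with $(k,k') = (1,2)$ gives $i_\varphi\, i_{[\varphi,\varphi]} = i_{[\varphi,\varphi]}\, i_\varphi$, so the triple bracket $[-i_{[\varphi,\varphi]}, i_\varphi]$ vanishes, and by trivial induction every subsequent term is zero. The main obstacle in practice, and the only step demanding real care, is sign bookkeeping: the $(-1)^k$ in the definition of $\mathcal{L}_\varphi$, the bidegree shift produced by $i_\varphi$, and the graded-commutativity sign for contractions all interact, and the clean identity $[d, i_\varphi] = -\mathcal{L}_\varphi$ is a $k = 1$ coincidence. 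I would therefore treat $i_\varphi$ as an ungraded linear operator on $A^{*,*}(M)$ throughout, keep the BCH expansion in its standard ordinary-commutator form, and derive the vanishing of the triple bracket directly from the stated graded-commutativity relation rather than from any repeat of the Cartan calculation.
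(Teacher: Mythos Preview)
Your argument is correct, and the sign bookkeeping checks out: with $\mathrm{ad}(X)=X\,i_\varphi-i_\varphi X$ one indeed has $\mathrm{ad}(d)=-\mathcal{L}_\varphi$ for $k=1$, $\mathrm{ad}^2(d)=-i_{[\varphi,\varphi]}$ by the preceding lemma, and $\mathrm{ad}^3(d)=0$ from the graded commutativity with $(k,k')=(1,2)$.

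Your route differs from the paper's. The paper does not invoke the Hadamard/BCH expansion; instead it first rewrites the generalized Cartan identity as
\[
d\,i_\varphi^2\eta = 2\,i_\varphi\,d\,i_\varphi\eta - i_\varphi^2\,d\eta - i_{[\varphi,\varphi]}\eta,
\]
and then proves by induction on $k$ the closed formula
\[
d(i_\varphi^k\eta) = k\,i_\varphi^{k-1}d(i_\varphi\eta) - (k-1)\,i_\varphi^k\,d\eta - \tbinom{k}{2}\,i_\varphi^{k-2}\,i_{[\varphi,\varphi]}\eta,
\]
after which summing against $1/k!$ yields the result directly. Both proofs ultimately rest on the same two ingredients (the Cartan identity and the commutation $i_\varphi i_{[\varphi,\varphi]}=i_{[\varphi,\varphi]}i_\varphi$), but your packaging is cleaner: the truncation of the series is explained structurally by a single vanishing commutator rather than carried through an inductive recursion, and you never have to manipulate $i_\varphi^k$ for general $k$. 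The paper's approach, on the other hand, is slightly more self-contained in that it does not assume the reader knows the operator identity $e^{-A}Be^{A}=\sum_n\frac{1}{n!}\mathrm{ad}^n(B)$.
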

\begin{proof}
Let $\eta\in A^{*,*}(M)$, formula (\ref{genralizedcartan}) implies
\begin{align} \label{genralizedcartan1}
d i_\varphi^2\eta=2i_\varphi  d i_\varphi\eta-i_\varphi^2
d\eta-i_{[\varphi,\varphi]}\eta.
\end{align}
Substituting $\eta$ with $i_\varphi \eta$ in
(\ref{genralizedcartan1}), we obtain
\begin{align*}
d(i_\varphi^3 \eta)&=2i_\varphi d(i_\phi^2 \eta)-i_\varphi^2
di_\varphi \eta-i_{[\varphi,\varphi]}i_\varphi\eta\\\nonumber
&=3i_\varphi^2 d(i_\varphi \eta)-2i_\varphi^3d\eta-3i_\varphi
i_{[\varphi,\varphi]}\eta.
\end{align*}
where we have used $i_\varphi
i_{[\varphi,\varphi]}=i_{[\varphi,\varphi]}i_{\varphi}$. Then by
induction, we immediately have
\begin{align*}
d(i_\varphi^k \eta)=ki_\varphi^{k-1}d(i_\varphi
\eta)-(k-1)i_\varphi^kd\eta-\frac{k(k-1)}{2}i_\varphi^{k-2}i_{[\varphi,\varphi]}\eta
\end{align*}
for $k\geq 2$. Through a straightforward computation, it is easy to
show that
\begin{align*}
d(e^{i_\varphi}\eta)=e^{i_\varphi}\left(d\eta-\mathcal{L}_{\varphi}\eta-i_{\frac{1}{2}[\varphi,\varphi]}\eta\right).
\end{align*}
The proof of (\ref{commuted}) is completed.
\end{proof}

\begin{corollary} \label{commuted1}
If $\varphi\in A^{0,1}(M,T^{1,0}M)$ is integrable, then on
$A^{*,*}(M)$, we have
\begin{align} \label{obstructionequation1}
e^{-i_\varphi}\circ d\circ
e^{i_\varphi}=d-\mathcal{L}_{\varphi}^{1,0}=d+\partial
i_\varphi-i_{\varphi} \partial.
\end{align}
\end{corollary}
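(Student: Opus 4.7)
The plan is to pass from the unconditional identity
\begin{align*}
e^{-i_\varphi}\circ d\circ e^{i_\varphi}=d-\mathcal{L}_{\varphi}-i_{\frac{1}{2}[\varphi,\varphi]}
\end{align*}
proved in the preceding proposition to the integrable version by showing that the $(0,1)$-type piece of the Lie derivative cancels against the contraction by $\tfrac12[\varphi,\varphi]$. Decomposing $\mathcal{L}_\varphi=\mathcal{L}_\varphi^{1,0}+\mathcal{L}_\varphi^{0,1}$ according to bidegree, the identity above becomes
\begin{align*}
e^{-i_\varphi}\circ d\circ e^{i_\varphi}=d-\mathcal{L}_{\varphi}^{1,0}-\mathcal{L}_{\varphi}^{0,1}-i_{\frac{1}{2}[\varphi,\varphi]},
\end{align*}
so it suffices to prove the auxiliary identity $\mathcal{L}_\varphi^{0,1}=-\,i_{\bar\partial\varphi}$ valid for every (not necessarily integrable) $\varphi\in A^{0,1}(M,T^{1,0}M)$, and then to invoke integrability to cancel the last two terms.

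To establish the auxiliary identity I would argue in local holomorphic coordinates: write $\varphi=\varphi^j\otimes\partial_j$ with $\varphi^j\in A^{0,1}(M)$, so that $i_\varphi\eta=\varphi^j\wedge i_{\partial_j}\eta$. Since $\partial_j$ is a holomorphic vector field, a direct check on monomials $f\,dz^I\wedge d\bar z^K$ gives the anticommutation $\bar\partial\circ i_{\partial_j}+i_{\partial_j}\circ\bar\partial=0$. The graded Leibniz rule for $\bar\partial$, together with the fact that $\varphi^j$ has total degree $1$, then yields
\begin{align*}
\bar\partial(i_\varphi\eta)=(\bar\partial\varphi^j)\wedge i_{\partial_j}\eta+\varphi^j\wedge i_{\partial_j}(\bar\partial\eta)=i_{\bar\partial\varphi}\eta+i_\varphi(\bar\partial\eta),
\end{align*}
i.e.\ $\bar\partial\circ i_\varphi-i_\varphi\circ\bar\partial=i_{\bar\partial\varphi}$. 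Since $\mathcal{L}_\varphi^{0,1}=-\bar\partial\circ i_\varphi+i_\varphi\circ\bar\partial$ in the $k=1$ case of the definition recalled in the paper, this rearranges precisely to $\mathcal{L}_\varphi^{0,1}=-\,i_{\bar\partial\varphi}$.

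Now assembling everything, the integrability condition $\bar\partial\varphi=\tfrac12[\varphi,\varphi]$ gives
\begin{align*}
-\mathcal{L}_\varphi^{0,1}-i_{\frac{1}{2}[\varphi,\varphi]}=i_{\bar\partial\varphi}-i_{\bar\partial\varphi}=0,
\end{align*}
and plugging $\mathcal{L}_\varphi^{1,0}=-\partial\circ i_\varphi+i_\varphi\circ\partial$ (again the $k=1$ case of the definition) into $d-\mathcal{L}_\varphi^{1,0}$ produces the asserted formula $d+\partial\circ i_\varphi-i_\varphi\circ\partial$. The only real obstacle is the sign bookkeeping in the Leibniz computation establishing $\mathcal{L}_\varphi^{0,1}=-\,i_{\bar\partial\varphi}$; once that is in hand the corollary is a one-line consequence of integrability. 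As an alternative route, one could attempt to extract the identity $\mathcal{L}_\varphi^{0,1}=-\,i_{\bar\partial\varphi}$ from the generalized Cartan formula (\ref{genralizedcartan}) applied to the pair $(\varphi,\varphi')$ by matching bidegrees, but the direct coordinate computation is cleaner.
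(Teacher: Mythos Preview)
Your proposal is correct and follows essentially the same route as the paper: the paper's proof simply asserts the identity $\bar{\partial}i_{\varphi}-i_{\varphi}\bar{\partial}=i_{\bar{\partial}\varphi}$ as a ``straightforward computation'' and then invokes integrability, which is exactly your argument with the computation spelled out in coordinates. The only difference is the level of detail you provide for that auxiliary identity.
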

\begin{proof}
By a straightforward computation, we have $
\bar{\partial}i_{\varphi}-i_{\varphi}\bar{\partial}=i_{\bar{\partial}\varphi}.
$ Then Corollary \ref{commuted1} follows directly from the
integrability of $\varphi$, i.e. $\bp
{\varphi}=\frac{1}{2}[\varphi,\varphi]$.
\end{proof}
In particular, we have
\begin{corollary} \label{coro-obstructionequation2}
Given an integrable Beltrami differential $\varphi\in
A^{0,1}(M,T^{1,0}M)$, for any $(n,0)$-form $\Omega$ on $M$, the
corresponding $(n,0)$-form
$\rho_{\varphi}(\Omega)=e^{i_{\varphi}}(\Omega)$ on $M_\varphi$ is
holomorphic, if and only if
\begin{align} \label{obstructionequation2}
\bar{\partial}\Omega =-\partial(\varphi\lrcorner
                  \Omega).
\end{align}
\end{corollary}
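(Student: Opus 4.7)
The plan is to reduce the claim directly to the commutator identity from Corollary \ref{commuted1}, exploiting the extremality of bidegree $(n,0)$ on an $n$-dimensional complex manifold.

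First I would observe that $\rho_\varphi(\Omega) = e^{i_\varphi}\Omega$ lies in $A^{n,0}(M_\varphi)$, and on a complex manifold of dimension $n$ any $(n,0)$-form $\tau$ automatically satisfies $\partial_{M_\varphi}\tau = 0$ for type reasons (an $(n+1,0)$-form vanishes identically). Hence $\rho_\varphi(\Omega)$ is holomorphic on $M_\varphi$ if and only if $d(e^{i_\varphi}\Omega) = 0$, where $d$ is the ordinary exterior derivative, which is intrinsic and does not depend on the choice of complex structure.

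Next I would apply Corollary \ref{commuted1}, which gives on $A^{*,*}(M)$ the identity
\begin{align*}
e^{-i_\varphi} \circ d \circ e^{i_\varphi} = d + \partial\,i_\varphi - i_\varphi\,\partial.
\end{align*}
Evaluating both sides at $\Omega$, I would use the fact that $\Omega \in A^{n,0}(M)$ forces $\partial \Omega = 0$ (again an $(n+1,0)$-form), so $d\Omega = \bar\partial \Omega$ and $i_\varphi \partial \Omega = 0$. This yields
\begin{align*}
e^{-i_\varphi}\, d\, e^{i_\varphi}\Omega = \bar\partial \Omega + \partial(\varphi\lrcorner \Omega),
\end{align*}
an element of $A^{n,1}(M)$ (both summands are of type $(n,1)$, so no further type decomposition is needed).

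Finally, since $e^{-i_\varphi}$ is a bijection on forms, the vanishing $d(e^{i_\varphi}\Omega)=0$ is equivalent to $e^{-i_\varphi} d(e^{i_\varphi}\Omega)=0$, which by the previous display is equivalent to
\begin{align*}
\bar\partial \Omega + \partial(\varphi\lrcorner \Omega) = 0,
\end{align*}
i.e.\ equation (\ref{obstructionequation2}). There is no real obstacle here beyond bookkeeping of bidegrees; the only subtlety worth emphasizing is that one must combine the commutator formula from Corollary \ref{commuted1} with the type considerations $\partial\Omega = 0$ and $\partial_{M_\varphi}(e^{i_\varphi}\Omega) = 0$ that are special to top holomorphic degree, since otherwise the equivalence between holomorphicity on $M_\varphi$ and the closedness condition $d(e^{i_\varphi}\Omega)=0$ would fail.
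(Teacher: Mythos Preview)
Your proof is correct and follows essentially the same approach as the paper: both arguments apply the commutator identity of Corollary~\ref{commuted1} to $\Omega$, use that $\partial\Omega=0$ and $\partial_{M_\varphi}(e^{i_\varphi}\Omega)=0$ for type reasons in top holomorphic degree, and conclude that holomorphicity of $e^{i_\varphi}\Omega$ on $M_\varphi$ is equivalent to $\bar\partial\Omega+\partial(\varphi\lrcorner\Omega)=0$. Your write-up is slightly more explicit about why the invertibility of $e^{-i_\varphi}$ closes the argument, but the logic is identical.
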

\begin{proof}
Let $d=\bp_{\varphi}+\p_{\varphi}$ be the corresponding $\bp$ and
$\p$ operators on $M_\varphi$. Since $\Omega\in A^{n,0}(M)$, we have
$$ (d+\partial i_\varphi-i_{\varphi}
\partial)\Omega=\bar{\partial}\Omega+\partial(\varphi\lrcorner
                  \Omega).$$ Since $e^{i_\varphi}(\Omega)\in A^{n,0}(M_\varphi)$, we have
$d\circ e^{i_\varphi}(\Omega)=\bp_\varphi (e^{i_\varphi}(\Omega))$.
Therefore, $e^{i_\varphi}(\Omega)$ is holomorphic on $M_\varphi$ if
and only if
$$\bar{\partial}\Omega+\partial(\varphi\lrcorner\Omega)=0.$$
\end{proof}

\begin{remark}
Equation (\ref{obstructionequation2}) is called the extension
equation whose solution can be used to construct the extensions of
holomorphic $(n,0)$-forms from $M$ to $M_\varphi$ which will be
discussed in detail in Section \ref{Section-extensionforms}.
\end{remark}

\section{Hodge theory and the operator $T$ on compact K\"ahler manifolds}
\label{Section-Hodgetheory}

Let $(M,\omega)$ be an $n$-dimensional compact K\"ahler manifold
with K\"ahler metric $\omega$,  and $\Vert\cdot\Vert$ be the
$L^{2}$-norm on smooth differential forms $A^{p,q}(M)$ induced by
the  metric $\omega$.
 Denote by $ L^{p,q}_2(M) $ the $L^2$-completion of
$A^{p,q}(M)$. On $A^{p,q}(M)$, we have the equality of the
Laplacians
\begin{align*}
\square_{\bar{\partial}}=\square_\partial=\frac{1}{2}\Delta_d.
\end{align*}
Let $H$ denotes the orthogonal projection from $A^{p,q}(M)$ to the
harmonic space $\mathbb{H}^{p,q}(M)=\ker \square_{\bar{\partial}}$,
We have
\begin{proposition}[cf. Pages 157-158 \cite{MK71}] \label{Prop-Hodge}
There exists a bounded operator $G$ on $A^{p,q}(M)$, called Green
operator such that
\begin{align*}
\square_{\bar{\partial}}G=G\square_{\bar{\partial}}=Id-H, \
\bar{\partial}G=G\bar{\partial}, \ \bar{\partial}^*
G=G\bar{\partial}^*, \ HG=GH=0.
\end{align*}
Moreover, $ \bar{\partial}H=H\bar{\partial}=0, \
\bar{\partial}^*H=H\bar{\partial}^*=0. $ These formulas also hold if
we replace the operator $\bp$ with $\p$.
\end{proposition}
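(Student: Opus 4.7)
The plan is to reduce everything to the standard Hodge decomposition theorem for the elliptic operator $\square_{\bar{\partial}}$ on the compact manifold $M$. Since $\square_{\bar{\partial}}$ is a second-order, formally self-adjoint, non-negative elliptic operator acting on sections of $\wedge^{p,q}T^{*}M$, elliptic theory yields the orthogonal $L^{2}$-decomposition
$$A^{p,q}(M) = \mathbb{H}^{p,q}(M) \,\oplus\, \square_{\bar{\partial}}A^{p,q}(M),$$
with $\mathbb{H}^{p,q}(M)$ finite-dimensional. I would take this decomposition as the starting point, citing it from \cite{MK71} rather than re-deriving it from G\"arding's inequality and elliptic regularity; that is the one genuinely analytic ingredient, and the only real obstacle one has to cross.

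The operator $G$ is then defined by splitting any $\alpha \in A^{p,q}(M)$ as $\alpha = H\alpha + \square_{\bar{\partial}}\beta$ with $\beta\perp \mathbb{H}^{p,q}(M)$, and setting $G\alpha := \beta$. This is well-defined because $\beta$ is uniquely determined in $\mathbb{H}^{p,q}(M)^{\perp}$. The identities $\square_{\bar{\partial}}G = G\square_{\bar{\partial}} = \mathrm{Id}-H$ and $HG = GH = 0$ are then immediate from the construction. Boundedness on $L^{2}$ follows from the closed graph theorem together with the elliptic a priori estimate $\|\beta\|_{s+2} \leq C\|\square_{\bar{\partial}}\beta\|_{s}$.

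For the commutation relations with $\bar{\partial}$ and $\bar{\partial}^{*}$, the key algebraic point is that both commute with $\square_{\bar{\partial}}=\bar{\partial}\bar{\partial}^{*}+\bar{\partial}^{*}\bar{\partial}$, as a consequence of $\bar{\partial}^{2}=0$ and $(\bar{\partial}^{*})^{2}=0$. Therefore $\bar{\partial}$ preserves both $\mathbb{H}^{p,q}(M)$ and its orthogonal complement, and $\bar{\partial}G = G\bar{\partial}$ follows by applying $\bar{\partial}$ to the decomposition of $\alpha$ and using that $\bar{\partial}H\alpha = 0$; the latter holds because any $\eta\in\mathbb{H}^{p,q}(M)$ satisfies $0 = \langle\square_{\bar{\partial}}\eta,\eta\rangle = \|\bar{\partial}\eta\|^{2} + \|\bar{\partial}^{*}\eta\|^{2}$, forcing $\bar{\partial}\eta = \bar{\partial}^{*}\eta = 0$. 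The dual identity $H\bar{\partial}=0$ then follows from $\langle \bar{\partial}\beta,\eta\rangle = \langle\beta,\bar{\partial}^{*}\eta\rangle = 0$ for any $\beta$ and harmonic $\eta$. The same reasoning handles $\bar{\partial}^{*}H = H\bar{\partial}^{*} = 0$ and $\bar{\partial}^{*}G = G\bar{\partial}^{*}$.

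Finally, to transfer every statement from $\bar{\partial}$ to $\partial$, I would invoke the K\"ahler identities on $(M,\omega)$, which give the pointwise equality
$$\square_{\bar{\partial}} \;=\; \square_{\partial} \;=\; \tfrac{1}{2}\Delta_{d}$$
on $A^{p,q}(M)$, as already recorded in the excerpt. Hence the harmonic space and the Green operator are literally the same object for both operators, and all the properties established for $\bar{\partial}$ transcribe verbatim with $\bar{\partial}$ replaced by $\partial$. No further work is needed beyond invoking this identification.
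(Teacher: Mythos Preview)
The paper does not actually give a proof of this proposition; it is stated with the citation ``cf.\ Pages 157--158 \cite{MK71}'' and used as background. Your sketch is correct and is essentially the standard argument one finds in Morrow--Kodaira: elliptic theory for $\square_{\bar\partial}$ yields the orthogonal decomposition, $G$ is defined as the inverse of $\square_{\bar\partial}$ on $\mathbb{H}^{\perp}$, the commutation relations follow from $[\bar\partial,\square_{\bar\partial}]=[\bar\partial^{*},\square_{\bar\partial}]=0$ together with $\bar\partial H=H\bar\partial=0$, and the K\"ahler identity $\square_{\bar\partial}=\square_{\partial}$ transfers everything to $\partial$. There is nothing to compare, since the paper simply imports the result.
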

We have the following quasi-isometry formula from \cite{LRY}.
\begin{theorem}[cf. Theorem 1.1(3)
\cite{LRY}]  \label{theorem-quasi-isometry} For any $g\in
A^{p,q}(M)$, we have
\begin{align} \label{inequ1}
\|\bar{\partial}^*G\partial g\|^2\leq \|g\|^2.
\end{align}
\end{theorem}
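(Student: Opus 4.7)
The plan is to reduce $\|\bp^*G\p g\|^2 \leq \|g\|^2$ to two applications of the identity $\square G = I-H$, exploiting the K\"ahler identity $\square_{\bp} = \square_{\p}$ so that a single Green operator $G$ serves to invert both Laplacians on the orthogonal complement of the harmonic space (Proposition \ref{Prop-Hodge}).

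First I would move $\bp^*$ to the right by duality,
\begin{equation*}
\|\bp^* G \p g\|^2 = \langle G\p g,\ \bp \bp^* G\p g\rangle.
\end{equation*}
Because on a K\"ahler manifold every harmonic form is annihilated by all four operators $\p,\bp,\p^*,\bp^*$, one has $H\p g=0$ (as $\langle \p g,h\rangle=\langle g,\p^* h\rangle=0$ for any harmonic $h$). Applying $\bp\bp^* G + \bp^* \bp G = \square_{\bp}G = I-H$ to $\p g$ thus gives $\bp\bp^* G\p g = \p g - \bp^* \bp G\p g$, and substituting back,
\begin{equation*}
\|\bp^* G\p g\|^2 = \langle G\p g,\ \p g\rangle - \|\bp G\p g\|^2 \leq \langle G\p g,\ \p g\rangle.
\end{equation*}

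The second step is to bound $\langle G\p g, \p g\rangle = \langle g, \p^* G\p g\rangle = \langle g, G\p^* \p g\rangle$, using $\p^* G = G\p^*$. Here the K\"ahler identity enters in an essential way: since $\square_{\bp} = \square_{\p} = \p\p^* + \p^*\p$, the same Green operator satisfies $G\p^*\p + G\p\p^* = I-H$, whence $G\p^*\p g = g - Hg - G\p\p^* g$. Pairing with $g$ and commuting $G$ past $\p^*$ once more,
\begin{equation*}
\langle g, G\p^* \p g\rangle = \|g\|^2 - \|Hg\|^2 - \langle G\p^* g,\ \p^* g\rangle \leq \|g\|^2,
\end{equation*}
where the final inequality follows from non-negativity of $G$ (which is the inverse of the non-negative self-adjoint operator $\square$ on $\mathbb{H}^\perp$). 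Combining the two displays yields $\|\bp^* G\p g\|^2 \leq \|g\|^2$.

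The conceptual bottleneck is the K\"ahler identity $\square_{\p} = \square_{\bp}$: without it there is no reason to expect that the Green operator of $\square_{\bp}$ should also invert $\p^* \p + \p\p^*$, and the estimate would fail in a general Hermitian setting. Once that identity is granted, the remaining work is bookkeeping with the commutation relations from Proposition \ref{Prop-Hodge}, the Hodge decomposition, and non-negativity of $G$. As a bonus, the argument identifies the defect in the inequality exactly: equality holds iff $\bp G\p g = 0$, $\p^* g = 0$, and $Hg = 0$.
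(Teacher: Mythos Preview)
Your proof is correct and follows essentially the same route as the paper's: both arguments expand $\|\bp^*G\p g\|^2$ via adjunction, apply $\square_{\bp}G=I-H$ to $\p g$, then invoke the K\"ahler identity $\square_{\bp}=\square_{\p}$ together with $\square_{\p}G=I-H$ applied to $g$, arriving at the exact identity $\|\bp^*G\p g\|^2=\|g\|^2-\|Hg\|^2-\langle \p^*g,G\p^*g\rangle-\|\bp G\p g\|^2$ and concluding by non-negativity of $G$. Your presentation is a bit more structured (two explicit steps rather than one chain of equalities), and your remark on the equality case is a correct extra observation not stated in the paper.
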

For reader's convenience, we provide the proof of Theorem
\ref{theorem-quasi-isometry} here.
\begin{proof}
The proof follows from the following straightforward computation
based on the Hodge theory and the formulas in Proposition
\ref{Prop-Hodge}. More precisely, we have
\begin{align}
\|\bar{\partial}^*G\partial g\|^2&=\langle \bar{\partial}^*G\partial
g,\bar{\partial}^*G\partial g\rangle
=\langle\bar{\partial}\bar{\partial}^*G\partial g, G\partial g\rangle\notag\\
&=\langle\square_{\bar{\partial}}G\partial g,G\partial g\rangle-\langle\bar{\partial}^*\bar{\partial}G\partial g, G\partial g\rangle\notag\\
&=\langle\partial g,G\partial g\rangle-\langle\bar{\partial}G\partial g,\bar{\partial}G\partial g\rangle\notag\\
&=\langle g,\square_\partial Gg\rangle-\langle g, \partial\partial^* Gg\rangle-\|\bar{\partial} G\partial g\|\notag\\
&=\langle g, g-Hg\rangle-\langle \partial^*g,G\partial^*g\rangle-\|\bar{\partial} G\partial g\|\notag\\
&=\|g\|^2-\|Hg\|^2-\langle \partial^*g,G\partial^*g\rangle-\|\bar{\partial} G\partial g\|\notag\\
&\leq \|g\|^2.\notag
\end{align}
The last inequality holds since the Green operator $G$ is a
non-negative operator.
\end{proof}

Now we consider the operator $$T=\bar{\partial}^*G\partial.$$ The
inequality $(\ref{inequ1})$ implies that $T$ is an operator of norm
less than or equal to 1 in the Hilbert space of $L^2$ forms. So we
have
\begin{corollary} \label{Tinvertible}
Given a compact K\"ahler manifold $(M,\omega)$, let $\varphi\in
A^{0,1}(M,T^{1,0}M)$ be a Beltrami differential acting on the
Hilbert space of $L^2$ forms by contraction such that its
$L_\infty$-norm $\|\varphi\|_\infty<1$, then the operator
$I+T\varphi$ is invertible on the Hilbert space of $L^2$ forms.
\end{corollary}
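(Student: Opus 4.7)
The plan is to show that $T\varphi$ is a strict contraction on the Hilbert space of $L^2$ forms, so that the invertibility of $I+T\varphi$ follows immediately from the Neumann series.

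First I would extend $T$ from smooth forms to the $L^2$-completion. Theorem \ref{theorem-quasi-isometry} gives $\|Tg\|\leq \|g\|$ for every $g\in A^{*,*}(M)$; since smooth forms are dense in the $L^2$-completion, $T$ extends uniquely to a bounded operator of norm at most $1$ on the Hilbert space of $L^2$ forms. Next I would check that contraction by $\varphi$ defines a bounded operator on $L^2$ forms whose operator norm is at most $\|\varphi\|_\infty$. This is really a pointwise statement: at each point $p\in M$, choose an $\omega$-unitary frame so that $\omega_p$ becomes the standard K\"ahler form on $\mathbb{C}^n$, and write $\varphi_p=\eta\otimes Y$ locally so that $i_\varphi = \eta\wedge i_Y$. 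A routine Cauchy--Schwarz computation in this frame yields the pointwise estimate $|\varphi\lrcorner \sigma|_\omega(p)\leq |\varphi|_\omega(p)\,|\sigma|_\omega(p)$; integrating over $M$ then gives $\|\varphi\lrcorner \sigma\|\leq \|\varphi\|_\infty\,\|\sigma\|$ for every smooth $\sigma$, and this extends by density to all $L^2$ forms.

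Combining the two estimates, $\|T\varphi\|\leq \|T\|\cdot\|\varphi\|_\infty \leq \|\varphi\|_\infty<1$. Hence the Neumann series
\[
(I+T\varphi)^{-1}=\sum_{k=0}^{\infty}(-T\varphi)^k
\]
converges in the operator norm on $L_2^{*,*}(M)$ and provides a bounded two-sided inverse of $I+T\varphi$. The only mildly delicate step is the pointwise norm bound on $\varphi\lrcorner$; the remainder is a direct application of the Neumann series criterion, which is precisely why the hypothesis $\|\varphi\|_\infty<1$ is the natural one.
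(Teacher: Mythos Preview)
Your proposal is correct and follows essentially the same approach as the paper: the paper simply remarks that the quasi-isometry inequality makes $T$ an operator of norm at most $1$ on $L^2$ forms and then states the corollary as an immediate consequence, leaving the Neumann-series argument and the bound $\|\varphi\lrcorner\,\cdot\,\|\leq\|\varphi\|_\infty\|\cdot\|$ implicit. Your write-up just supplies these routine details.
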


\section{Extension of holomorphic canonical form}
\label{Section-extensionforms} Let $(M,\omega)$ be a compact
K\"ahler manifold with $\dim_{\mathbb{C}}M=n$. From the discussions
in Section \ref{subsection-extension}, we know that, given an
integrable Beltrami differential $\varphi$ on $M$, in order to find
an $(n,0)$-form $\Omega$ on $M$ such that the corresponding
$(n,0)$-form $\rho_{\varphi}(\Omega)=e^{i_\varphi}\Omega$  is
holomorphic on $M_\varphi$, we only need to find an $(n,0)$-form
$\Omega$ on $M$ such that $\Omega$ satisfies the extension equation
\begin{equation} \label{recursionequglobal}
                  \bar{\partial}\Omega =-\partial(\varphi\lrcorner
                  \Omega).
                          \end{equation}
In this section, we show that the equation
(\ref{recursionequglobal}) can be solved by using the Hodge theory
on $(M,\omega)$ reviewed in Section \ref{Section-Hodgetheory}.

\begin{proposition} \label{prop1-section}
Let $\varphi$ be an integrable Beltrami differential of $M$ with
$L_\infty$-norm $||\varphi||_{\infty}<1$. Given a holomorphic
$(n,0)$-form $\Omega_0$ on $M$, if $\Omega$ is a solution of the
equation
\begin{align} \label{integralequation-section}
\Omega=\Omega_0-\bar{\partial}^*G\partial \left(\varphi\lrcorner
\Omega\right)= \Omega_0-T\varphi \Omega,
\end{align}
then $\Omega$ is the solution of the equation
(\ref{recursionequglobal}).
\end{proposition}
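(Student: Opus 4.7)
The plan is to set $F := \bar\partial\Omega + \partial(\varphi\lrcorner\Omega)$ and prove $F=0$. Applying $\bar\partial$ to the integral equation $\Omega = \Omega_0 - T(\varphi\lrcorner\Omega)$ and using $\bar\partial\Omega_0 = 0$ gives $\bar\partial\Omega = -\bar\partial T(\varphi\lrcorner\Omega)$. The first main step is to establish the commutator identity $\bar\partial T = \partial + T\bar\partial$ on $A^{*,*}(M)$. This follows from a direct Hodge-theoretic computation: write $\bar\partial\bar\partial^* G\partial = (\square_{\bar\partial} - \bar\partial^*\bar\partial)G\partial$, then apply $\square_{\bar\partial}G = I - H$, the commutations $\bar\partial G = G\bar\partial$ and $\partial G = G\partial$ (valid on K\"ahler manifolds since $\square_{\bar\partial} = \square_\partial$), the sign rule $\bar\partial\partial = -\partial\bar\partial$, and the vanishing $H\partial = 0$ (harmonic forms on a compact K\"ahler manifold are both $\partial$- and $\partial^*$-closed). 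Applied to $\varphi\lrcorner\Omega$ this yields
\[
\bar\partial T(\varphi\lrcorner\Omega) = \partial(\varphi\lrcorner\Omega) + T\bar\partial(\varphi\lrcorner\Omega),
\]
so $F = -T\bar\partial(\varphi\lrcorner\Omega)$.

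Next I would bring in integrability. The Cartan-type formula $\bar\partial i_\varphi - i_\varphi\bar\partial = i_{\bar\partial\varphi}$ combined with $\bar\partial\varphi = \tfrac12[\varphi,\varphi]$ gives $\bar\partial(\varphi\lrcorner\Omega) = \tfrac12[\varphi,\varphi]\lrcorner\Omega + \varphi\lrcorner\bar\partial\Omega$. Substituting $\bar\partial\Omega = F - \partial(\varphi\lrcorner\Omega)$ and rearranging, the identity $F = -T\bar\partial(\varphi\lrcorner\Omega)$ becomes
\[
(I + T\varphi)F = T\bigl(\varphi\lrcorner\partial(\varphi\lrcorner\Omega)\bigr) - \tfrac12 T\bigl([\varphi,\varphi]\lrcorner\Omega\bigr).
\]
Now apply the generalized Cartan identity (\ref{genralizedcartan_special}) with $\sigma = \Omega$: since $\partial\Omega = 0$ by type, it reduces to $[\varphi,\varphi]\lrcorner\Omega = 2\varphi\lrcorner\partial(\varphi\lrcorner\Omega) - \partial(\varphi\lrcorner\varphi\lrcorner\Omega)$. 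The $\partial$-exact piece is annihilated by $T$ because $T\partial = \bar\partial^* G\partial^2 = 0$, so $-\tfrac12 T([\varphi,\varphi]\lrcorner\Omega) = -T(\varphi\lrcorner\partial(\varphi\lrcorner\Omega))$. The two terms on the right-hand side therefore cancel and we obtain $(I + T\varphi)F = 0$. By Corollary \ref{Tinvertible} the operator $I + T\varphi$ is invertible on $L^2$, hence $F = 0$, which is exactly the extension equation (\ref{recursionequglobal}).

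The main obstacle is the apparent circularity of the naive approach: after applying $\bar\partial$ to the integral equation and invoking $[\bar\partial, T] = \partial$, one obtains only $F = -T\bar\partial(\varphi\lrcorner\Omega)$, a relation formally equivalent to the starting point. Breaking this loop requires using integrability twice --- once to replace $\bar\partial\varphi$ by $\tfrac12[\varphi,\varphi]$, and once through the generalized Cartan formula (\ref{genralizedcartan_special}) to rewrite $[\varphi,\varphi]\lrcorner\Omega$ so that its non-$\partial$-exact piece exactly matches $2\varphi\lrcorner\partial(\varphi\lrcorner\Omega)$. This precise algebraic matching, together with $T\partial = 0$ and the invertibility of $I + T\varphi$, is what closes the argument.
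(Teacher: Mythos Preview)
Your proof is correct and follows essentially the same route as the paper's: both define $F=\bar\partial\Omega+\partial(\varphi\lrcorner\Omega)$, use Hodge identities together with integrability and the generalized Cartan formula (\ref{genralizedcartan_special}) to reduce to $(I+T\varphi)F=0$, and then conclude $F=0$. The only cosmetic differences are that you package the Hodge computation as a commutator identity $\bar\partial T=\partial+T\bar\partial$ (the paper does the same calculation inline) and that you invoke Corollary~\ref{Tinvertible} directly, whereas the paper writes out the equivalent norm estimate $\|F\|\le\|\varphi\|_\infty\|F\|<\|F\|$ from Theorem~\ref{theorem-quasi-isometry}.
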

\begin{proof}
We first assume that $\Omega$ satisfies the equation $
\Omega=\Omega_0-\bar{\partial}^*G\partial \left(\varphi\lrcorner
\Omega\right).$ We need to show that
\begin{align*}
\bar{\partial}\Omega=-\partial(\varphi\lrcorner \Omega).
\end{align*}
In fact, from the formulae in Proposition \ref{Prop-Hodge}, it
follows that
\begin{align*}
\bar{\partial}\Omega&=-\bar{\partial}\bar{\partial}^*G\partial
\left(\varphi\lrcorner \Omega\right)\\\nonumber
&=(\bar{\partial}^*\bar{\partial}-\square_{\bar{\partial}})G\partial(\varphi\lrcorner
\Omega)\\\nonumber
&=(\bar{\partial}^*\bar{\partial}G-I+H)\partial(\varphi\lrcorner
\Omega)\\\nonumber &=-\partial(\varphi\lrcorner
\Omega)+\bar{\partial}^*\bar{\partial}G\partial(\varphi\lrcorner
\Omega).
\end{align*}
Let
\begin{align*}
\Phi=\bar{\partial}\Omega+\partial(\varphi\lrcorner \Omega).
\end{align*}
Then we have
\begin{align*}
\Phi&=\bar{\partial}\Omega+\partial(\varphi\lrcorner
\Omega)\\\nonumber
&=\bar{\partial}^*\bar{\partial}G\partial(\varphi\lrcorner
\Omega)\\\nonumber
&=-\bar{\partial}^*G\partial\bar{\partial}(\varphi\lrcorner
\Omega)\\\nonumber
&=-\bar{\partial}^*G\partial((\bar{\partial}\varphi)\lrcorner
\Omega+\varphi\lrcorner \bar{\partial}\Omega)\\\nonumber
&=-\bar{\partial}^*G\partial\left(\frac{1}{2}[\varphi,\varphi]\lrcorner
\Omega+\varphi\lrcorner \left(\Phi-\partial(\varphi\lrcorner
\Omega)\right)\right)\\\nonumber
&=-\bar{\partial}^*G\partial\left(\varphi\lrcorner \Phi\right),
\end{align*}
where in the last equality, we have used
(\ref{genralizedcartan_special}), $\partial \Omega=0$ and
$\partial^2=0$.

By Theorem \ref{theorem-quasi-isometry} and the condition
$||\varphi||_{\infty}<1$, we have
\begin{align}
||\Phi||^2\leq ||\phi\lrcorner \Phi||^2\leq
||\phi||_\infty||\Phi||^2< ||\Phi||^2.
\end{align}
Then we get the contradiction $\|\Phi\|^2<\|\Phi\|^2$ unless
$\Phi=0$. Hence,
\begin{align*}
\bar{\partial}\Omega=-\partial(\varphi\lrcorner \Omega).
\end{align*}
\end{proof}

Our method in the above proof of Proposition
\ref{prop1-section} is also used in \cite{LRW}, see Remark 4.6 of
\cite{LRW}.

Conversely, we have
\begin{proposition} \label{prop2-section}
If the $(n,0)$-form $\Omega$ satisfies the equation
(\ref{recursionequglobal}), then there exists a unique holomorphic
$(n,0)$-form $\Omega_0$, such that $\Omega$ satisfies the equation
(\ref{integralequation-section}).
\end{proposition}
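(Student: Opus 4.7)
The plan is to define $\Omega_0$ explicitly from $\Omega$ by the formula forced on us by (\ref{integralequation-section}), and then to exploit the hypothesis $\bar\partial\Omega=-\partial(\varphi\lrcorner\Omega)$ together with the Hodge decomposition on $(M,\omega)$ to show that this $\Omega_0$ is holomorphic. Uniqueness will then be automatic.

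\smallskip

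\emph{Construction of $\Omega_0$.} Set
\begin{align*}
\Omega_0 \;:=\; (I+T\varphi)\Omega \;=\; \Omega + \bar\partial^{*}G\,\partial(\varphi\lrcorner\Omega).
\end{align*}
Since $\varphi\lrcorner\Omega\in A^{n-1,1}(M)$, one checks by tracking bidegrees that $\Omega_0\in A^{n,0}(M)$. By construction, $\Omega$ satisfies equation (\ref{integralequation-section}) with this $\Omega_0$.

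\smallskip

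\emph{Holomorphicity of $\Omega_0$.} This is the main step. Using the hypothesis $\partial(\varphi\lrcorner\Omega)=-\bar\partial\Omega$, rewrite
\begin{align*}
\Omega_0 \;=\; \Omega - \bar\partial^{*}G\,\bar\partial\Omega.
\end{align*}
The Hodge decomposition on $(M,\omega)$ (Proposition \ref{Prop-Hodge}) gives
\begin{align*}
\Omega \;=\; H\Omega + \bar\partial G\bar\partial^{*}\Omega + \bar\partial^{*}G\bar\partial\Omega.
\end{align*}
Because $\Omega$ is of type $(n,0)$, the adjoint $\bar\partial^{*}\Omega$ would live in $A^{n,-1}(M)=0$, hence $\bar\partial^{*}\Omega=0$. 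Substituting, I obtain the clean identity
\begin{align*}
\Omega_0 \;=\; H\Omega,
\end{align*}
so $\Omega_0$ is harmonic. On a compact K\"ahler manifold every harmonic $(n,0)$-form is holomorphic (indeed, for $(n,0)$-forms $\bar\partial^{*}$ vanishes automatically, so harmonic is equivalent to $\bar\partial$-closed). Hence $\Omega_0\in A^{n,0}(M)$ is holomorphic, as required.

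\smallskip

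\emph{Uniqueness.} If $\Omega_0$ and $\Omega_0'$ both satisfy (\ref{integralequation-section}) with the same $\Omega$, then $\Omega_0-\Omega_0'=0$ at once from the equation itself. (Equivalently, the holomorphic representative is determined by $\Omega_0=H\Omega$, which depends only on $\Omega$.)

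\smallskip

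\emph{Where the difficulty lies.} The only nontrivial point is the passage from the differential identity $\bar\partial\Omega=-\partial(\varphi\lrcorner\Omega)$ to holomorphicity of $(I+T\varphi)\Omega$; I expect the temptation is to mimic the Neumann-type iteration used in the proof of Proposition \ref{prop1-section}. The observation that makes everything collapse is that $\bar\partial^{*}$ annihilates $(n,0)$-forms for trivial bidegree reasons, which reduces the Hodge decomposition of $\Omega$ to $\Omega = H\Omega + \bar\partial^{*}G\bar\partial\Omega$ and immediately identifies $\Omega_0$ with the harmonic projection $H\Omega$. No estimate or use of $\|\varphi\|_\infty<1$ is needed for this direction; the hypothesis of integrability of $\varphi$ is also not used here, since it was already encoded in the input equation (\ref{recursionequglobal}).
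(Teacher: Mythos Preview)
Your proof is correct and follows essentially the same approach as the paper's: both arguments apply the Hodge identities to recognize that $\Omega_0=\Omega+\bar\partial^{*}G\partial(\varphi\lrcorner\Omega)=\Omega-\bar\partial^{*}G\bar\partial\Omega=H\Omega$, using implicitly (in the paper) or explicitly (in your write-up) that $\bar\partial^{*}$ vanishes on $(n,0)$-forms. The only cosmetic difference is that the paper reaches this by applying $\bar\partial^{*}G$ to equation (\ref{recursionequglobal}) and simplifying $\bar\partial^{*}G\bar\partial\Omega=\Box_{\bar\partial}G\Omega=\Omega-H\Omega$, whereas you first define $\Omega_0$ and then invoke the full Hodge decomposition of $\Omega$.
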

\begin{proof}
Applying the operator $\bar{\partial}^*G$ to
(\ref{recursionequglobal}), we obtain
\begin{align*}
\bar{\partial}^*G\bar{\partial}
\Omega=-\bar{\partial}^*G\partial(\varphi\lrcorner \Omega).
\end{align*}
From the formulas in Proposition \ref{Prop-Hodge}, it follows that
\begin{align*}
\bp^*G\bp\Omega=\bp^*\bp G\Omega=\Box_{\bp}G\Omega=\Omega-H\Omega.
\end{align*}
Let $\Omega_0=H\Omega$, which is a harmonic $(n,0)$-form on $M$, it
 immediately implies that
\begin{align*}
\Omega=\Omega_0-\bar{\partial}^*G\partial \left(\varphi\lrcorner
\Omega\right).
\end{align*}
\end{proof}

Furthermore, it is easy to show that the equation
(\ref{integralequation-section}) has a unique solution. Indeed, if
we assume that the equation $(\ref{integralequation-section})$ has two
different solutions $\Omega$ and $\Omega'$, i.e. $\Omega-\Omega'\neq
0$. Then
\begin{align*}
\Omega-\Omega'=-T\varphi(\Omega-\Omega').
\end{align*}
By Theorem \ref{theorem-quasi-isometry}, we have
\begin{align*}
\|\Omega-\Omega'\|=\|T\varphi(\Omega-\Omega')\|\leq
\|\varphi(\Omega-\Omega')\|\leq \|\varphi\|_\infty
\|\Omega-\Omega'\|<\|\Omega-\Omega'\|.
\end{align*}
which contradicts to $\Omega-\Omega'\neq 0$.

By Corollary \ref{Tinvertible}, this unique solution of the equation
(\ref{integralequation-section}) is given by
\begin{align*}
\Omega=(I+T\varphi)^{-1}\Omega_0,
\end{align*}
which is a smooth $(n,0)$-form since $\Omega_0$ is holomorphic.


In conclusion, we have
\begin{theorem}\label{solution}
Given any integrable Beltrami differential $\varphi$ with
$\|\varphi\|_\infty<1$, and any holomorphic $(n,0)$-form $\Omega_0$
on $M$, we have that
\begin{align*}
\Omega(\varphi)=\rho_{\varphi}((I+T\varphi)^{-1}\Omega_0)
\end{align*}
 is a
holomorphic $(n,0)$-form on $M_{\varphi}$ with $\Omega(0)=\Omega_0$.
\end{theorem}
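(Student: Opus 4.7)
The plan is to assemble the theorem from the three ingredients developed earlier in the paper: the operator-norm bound (Theorem \ref{theorem-quasi-isometry}) that makes $I+T\varphi$ invertible on the Hilbert space of $L^2$ forms (Corollary \ref{Tinvertible}), the fixed-point identity (Proposition \ref{prop1-section}) reducing the extension equation to the integral equation (\ref{integralequation-section}), and the pointwise characterization of holomorphicity on $M_\varphi$ (Corollary \ref{coro-obstructionequation2}).

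First I would define $\Omega := (I+T\varphi)^{-1}\Omega_0$ inside the Hilbert space $L^{n,0}_2(M)$. By Corollary \ref{Tinvertible}, the hypothesis $\|\varphi\|_\infty<1$ together with $\|T\|\le 1$ makes the Neumann series $\sum_{k\ge 0}(-T\varphi)^k\Omega_0$ converge in $L^2$, so $\Omega$ is a well-defined $L^2$ section satisfying, by construction, the integral equation $\Omega=\Omega_0-T\varphi\,\Omega$ of (\ref{integralequation-section}). Next I would promote $\Omega$ from $L^2$ to smooth. Since $\Omega_0$ is smooth, $-T\varphi\,\Omega=-\bar\partial^*G\partial(\varphi\lrcorner\Omega)$ picks up one derivative of regularity through the Green operator $G$ (which is pseudodifferential of order $-2$, while $\bar\partial^*\partial$ is of order $2$, and $\varphi\lrcorner$ is of order $0$), so iterating the identity $\Omega=\Omega_0-T\varphi\,\Omega$ and invoking elliptic bootstrap for $\square_{\bar\partial}$ gives $\Omega\in A^{n,0}(M)$.

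With $\Omega$ smooth, Proposition \ref{prop1-section} applies directly and yields the extension equation $\bar\partial\Omega=-\partial(\varphi\lrcorner\Omega)$. Applying Corollary \ref{coro-obstructionequation2} then tells us that $\rho_\varphi(\Omega)=e^{i_\varphi}\Omega$, which is an element of $A^{n,0}(M_\varphi)$ because $\rho_\varphi$ is a bijection from $A^{n,0}(M)$ onto $A^{n,0}(M_\varphi)$, is in fact holomorphic with respect to the complex structure $M_\varphi$. Finally, for the normalization $\Omega(0)=\Omega_0$, when $\varphi=0$ the operator $T\varphi$ vanishes identically, so $(I+T\varphi)^{-1}\Omega_0=\Omega_0$, and $\rho_0=e^{i_0}=\mathrm{Id}$, which gives $\Omega(0)=\Omega_0$.

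The main obstacle I anticipate is the regularity step: one needs to make sure that the formal $L^2$-defined inverse $(I+T\varphi)^{-1}\Omega_0$ is genuinely smooth before the pointwise identities of Corollary \ref{coro-obstructionequation2} can be invoked. This reduces to a standard elliptic bootstrap for $\square_{\bar\partial}$ using that $\varphi$ is smooth and $\Omega_0$ is smooth, but it has to be done carefully because $\varphi\lrcorner$ shifts form-type and we are composing with $G$ and first-order operators repeatedly; the cleanest route is to check that each term $(-T\varphi)^k\Omega_0$ lies in $A^{n,0}(M)$ and that the series converges in every Sobolev norm, using the standard estimate $\|T\varphi u\|_{H^s}\lesssim_s \|u\|_{H^s}$ coming from boundedness of $G$ on Sobolev spaces and smoothness of $\varphi$.
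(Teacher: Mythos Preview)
Your proposal is correct and follows essentially the same route as the paper: invoke Corollary \ref{Tinvertible} to define $\Omega=(I+T\varphi)^{-1}\Omega_0$, use Proposition \ref{prop1-section} to pass from the integral equation to the extension equation, and then conclude via Corollary \ref{coro-obstructionequation2}. The only difference is that you spell out the regularity bootstrap in some detail, whereas the paper disposes of it in a single clause (``which is a smooth $(n,0)$-form since $\Omega_0$ is holomorphic''); your extra care here is warranted but not a departure in strategy.
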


Applying  Theorem \ref{solution} to the integrable Beltrami
differential $\varphi(t)$ from the local Kodaira-Spencer-Kuranishi
deformaiton theory, one can choose $t$ small enough such that
$\|\varphi(t)\|_{\infty}<1$, we immediately obtain

\begin{corollary}\label{global} For any holomorphic $(n,0)$-form $\Omega_0\in
A^{n,0}(M)$, and the Beltrami differential $\varphi=\varphi(t)$ with
$|t|<\varepsilon$ small, there is a holomorphic $(n,0)$-form
$\Omega(t)$ on $M_t$,
$$\Omega(t) = \rho_t((I+T\varphi)^{-1}\Omega_0), $$
where $\rho_t=\rho_{\varphi(t)}$, with
$\Omega(0)=\Omega_0$.
\end{corollary}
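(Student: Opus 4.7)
The plan is to deduce this corollary directly from Theorem \ref{solution} applied to the integrable Beltrami differential $\varphi(t)$ produced by the Kodaira--Spencer--Kuranishi deformation theory, once we have verified the hypothesis $\|\varphi(t)\|_\infty<1$ for small $t$. The only nontrivial input beyond Theorem \ref{solution} is the existence of a good family $\{\varphi(t)\}$ and a uniform bound on its $L_\infty$-norm, both of which are standard consequences of the Kuranishi construction.

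First, I would recall the Kuranishi theorem: on the compact K\"ahler manifold $M$ there is a power series expansion $\varphi(t)=\sum_{\mu\geq1}\varphi_\mu(t)$ converging in the $C^k$-norm (in particular in the $C^0$, hence the $L_\infty$-norm) for $|t|<\varepsilon_0$ with $\varepsilon_0>0$ sufficiently small, such that $\varphi(t)\in A^{0,1}(M,T^{1,0}M)$ is an integrable Beltrami differential with $\varphi(0)=0$, and such that $M_t=\pi^{-1}(t)=M_{\varphi(t)}$. Since $\varphi(0)=0$ and $t\mapsto\|\varphi(t)\|_\infty$ is continuous at $t=0$, there exists $\varepsilon\in(0,\varepsilon_0]$ such that
\[
\|\varphi(t)\|_\infty<1\qquad\text{for all }|t|<\varepsilon.
\]

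With this bound in hand, I would apply Theorem \ref{solution} to the integrable Beltrami differential $\varphi=\varphi(t)$ and the holomorphic $(n,0)$-form $\Omega_0$. Corollary \ref{Tinvertible} guarantees that $I+T\varphi(t)$ is invertible on the Hilbert space of $L^2$-forms; thus $(I+T\varphi(t))^{-1}\Omega_0$ is a well-defined smooth $(n,0)$-form on $M$, and by Theorem \ref{solution} the form
\[
\Omega(t)=\rho_t\bigl((I+T\varphi(t))^{-1}\Omega_0\bigr)
\]
is a holomorphic $(n,0)$-form on $M_{\varphi(t)}=M_t$.

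Finally, to verify $\Omega(0)=\Omega_0$: at $t=0$ we have $\varphi(0)=0$, so $T\varphi(0)=0$ and $(I+T\varphi(0))^{-1}\Omega_0=\Omega_0$. Moreover $\rho_0=\rho_{\varphi(0)}=e^{i_0}=\mathrm{Id}$ by the defining formula \eqref{exponentialphi}, which gives $\Omega(0)=\Omega_0$ as required. The main (and essentially only) obstacle is the uniform bound $\|\varphi(t)\|_\infty<1$ for small $t$; this is immediate from the continuity (indeed real-analytic dependence) of $\varphi(t)$ on $t$ in the Kuranishi family, with $\varphi(0)=0$. No further analytic work beyond the results already established is needed.
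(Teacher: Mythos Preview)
Your proposal is correct and follows essentially the same approach as the paper: the paper simply notes that applying Theorem \ref{solution} to the Kuranishi family's $\varphi(t)$, with $t$ small enough that $\|\varphi(t)\|_\infty<1$, immediately yields the corollary. Your write-up just makes explicit the continuity argument for the $L_\infty$-bound and the verification that $\Omega(0)=\Omega_0$, which the paper leaves implicit.
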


 \section{Closed formulas on marked and polarized moduli spaces}\label{Section-moduli}
\subsection{The moduli space $\mathcal M$ of marked and polarized manifolds}
We refer the reader to \cite{Popp} for the basic facts on moduli
spaces in this section. Let $(M,L)$ be a polarized manifold. The
moduli space $\mathcal{M}_0$ of polarized manifolds is the complex
analytic space parameterizing the isomorphism class of polarized
manifolds with the isomorphism defined by
$$(M,L)\sim (M',L')\  \exists \ \text{biholomorphic map} \ f : M\to M' \ \text{s.t.} \ f^*L'=L .$$

We fix a lattice $\Lambda$ with a pairing $Q_{0}$, where $\Lambda$
is isomorphic to $H^n(M_{0},\mathbb{Z})/\text{Tor}$ for some $M_{0}$
in $\mathcal{M}_0$ and $Q_{0}$ is defined by the cup-product. For a
polarized manifold $(M,L)\in \mathcal{M}_0$, we define a marking
$\gamma$ as an isometry of the lattices
\begin{equation*}
\gamma :\, (\Lambda, Q_{0})\to (H^n(M,\mathbb{Z})/\text{Tor},Q)
\end{equation*}
where $Q$ is the Poincar\'e pairing.

Recall that a polarized and marked projective manifold is a triple
$(M, L, \gamma)$, where $M$ is a projective manifold, $L$ is a
polarization on $M$, and $\gamma$ is a marking $$\gamma :\,
(\Lambda, Q_{0})\to (H^n(M,\mathbb{Z})/\text{Tor},Q).$$ Two triples
$(M, L, \gamma)$ and $(M', L', \gamma')$ are called equivalent if
there exists a biholomorphic map $f:\,M\to M'$ with
\begin{align*}
f^*L'=L \ \text{and} \ f^*\gamma' =\gamma,
\end{align*}
where $f^*\gamma'$ is given by $\gamma':\, (\Lambda, Q_{0})\to
(H^n(M',\mathbb{Z})/\text{Tor},Q)$ composed with
$$f^*:\, (H^n(M',\mathbb{Z})/\text{Tor},Q)\to (H^n(M,\mathbb{Z})/\text{Tor},Q).$$
We denote by $[M, L, \gamma]$ the isomorphism class of polarized and
marked projective manifolds of $(M, L, \gamma)$.

The moduli space $\mathcal M$ of marked and polarized manifolds is
the complex analytic space parameterizing the isomorphism class of
marked and polarized manifolds.
Let $X$ denote the underlying real manifold for $M$. From the
definition we know that the first Chern class $c_1(L)\in H^2(X)$ is
fixed, which we can take to be the K\"ahler class on $M$. We will
only consider the connected component of the marked moduli space
containing $M$ which we still denote by $\mathcal{M}$.

\subsection{Closed explicit formula for the global section of holomorphic forms}
Now we take two distinct points in ${\mathcal{M}}$ whose
corresponding fibers are $M_0$ and $M_1$. Let $X$ be the background
smooth manifold. Denote the corresponding K\"ahler form on $M_0$ and
$M_1$ by $\omega_0$ and $\omega_1$ respectively, which we consider
as symplectic forms on $X$. Since
$$c_1(L) = [\omega_0]=[\omega_1]\in H^2(X),$$
by Theorem 2  of Moser in \cite{Moser}, we know that there exists a
continuous  family of diffeomorphisms $f_t$ of $X$ with $t\in [0,1]$
and $f_0=\mathrm{id}$, such that $\omega_t= f_t^* \omega_0$ and
$f_1^*\omega_0 =\omega_1$. Therefore by pulling back the complex
structure and $\omega_1$ using $f_t^{-1}$,  we may assume that,
considered as a symplectic form, $M_1$ also has K\"ahler form
$\omega_0$.
\begin{theorem} \label{Theorem-closedsection}
Given a point $(M,\omega_0)$ in the marked and polarized moduli
space $\mathcal{M}$ and a holomorphic $n$-form $s_0$ on $M$, there
is a canonical section $s$ of the Hodge bundle $\mathcal{H}^{n,0}$,
such that for any point $M_1$ in $\mathcal{M}$, the de Rham
cohomology class of $s$ in $H^*(M_1)$ is represented by
$$s = \rho_\varphi((I+T\phi)^{-1} s_0).$$
where $\phi$ is the Beltrami differential associated to $M_1$, and
$T=\bar{\partial}^*G\partial $ is the operator of the Hodge theory
on $M$ with K\"ahler metric $\omega_0$.
\end{theorem}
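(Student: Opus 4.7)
The plan is to reduce the statement to a direct application of Theorem~\ref{solution} by producing, for every competing complex structure $M_1 \in \mathcal{M}$, a single integrable Beltrami differential $\phi \in A^{0,1}(M,T^{1,0}M)$ with $\|\phi\|_\infty < 1$ that realizes $M_\phi \simeq M_1$, and then to verify that the resulting section does not depend on the auxiliary choices.

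First, I would normalize the symplectic structure. Using the hypothesis and the discussion preceding the theorem, Moser's theorem gives a diffeomorphism of the underlying smooth manifold $X$ identifying $\omega_1$ with $\omega_0$; pulling back the complex structure $J_1$ of $M_1$ via this diffeomorphism, I can assume that $J_1$ is another integrable almost complex structure on $X$ tamed by the \emph{same} Kähler form $\omega_0$. Because both the original $J_0$ (the complex structure of $M$) and the pulled-back $J_1$ are tamed by $\omega_0$, the projection $\iota_1\colon T^{1,0}M_{J_1}\to T^{1,0}M$ is an isomorphism, so by Proposition~\ref{propfinitedistance} there is a unique $\phi \in A^{0,1}(M,T^{1,0}M)$ with $M_\phi = M_1$. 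The taming by the same symplectic form, combined with Kählerness, forces the endomorphism $\phi\bar\phi$ to have operator norm strictly less than $1$ pointwise on $X$, hence $\|\phi\|_\infty < 1$; integrability $\bar\partial\phi = \tfrac12[\phi,\phi]$ is automatic from $J_1^2 = -\mathrm{Id}$ via Newlander--Nirenberg.

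Next, I apply Theorem~\ref{solution} to the data $(\phi, s_0)$: the operator $I + T\phi$ is invertible on $L^2$ by Corollary~\ref{Tinvertible}, $(I+T\phi)^{-1}s_0$ is a smooth $(n,0)$-form on $M$ solving the extension equation~\eqref{obstructionequation2}, and hence
\[
s := \rho_\phi\bigl((I+T\phi)^{-1}s_0\bigr)
\]
is a genuine holomorphic $(n,0)$-form on $M_1 = M_\phi$. This produces, for each $M_1 \in \mathcal{M}$, an element of $H^{n,0}(M_1)$. Using the marking identification $H^n(M_1,\mathbb{Z})/\mathrm{Tor} \cong \Lambda$ to trivialize the de Rham bundle $H^n$ on $\mathcal{M}$, the cohomology class $[s] \in H^n(M_1)$ defines a section of the Hodge sub-bundle $\mathcal{H}^{n,0} \subset H^n$.

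The main obstacle, and the step most in need of care, is showing canonicity: the construction depends a priori on (i) the Moser isotopy used to make the two Kähler forms coincide, and (ii) the pointwise identification of complex structures underlying $\phi$. For (i), any two Moser isotopies differ by a symplectomorphism isotopic to the identity, which acts trivially on de Rham cohomology, so the resulting cohomology class in $H^n(M_1)$ is unchanged. For (ii), the tameness of $J_1$ by $\omega_0$ forces the Beltrami differential $\phi$ to be uniquely determined by $M_1$ and $\omega_0$ via Proposition~\ref{propfinitedistance}, so no further choice enters. Therefore $s \mapsto [s]$ defines the desired global section of $\mathcal{H}^{n,0}$ over $\mathcal{M}$, with the closed formula asserted in the theorem. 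The hardest technical point is the uniform bound $\|\phi\|_\infty < 1$ from the taming condition; everything else is an assembly of Theorem~\ref{solution}, Moser's theorem, and the marking.
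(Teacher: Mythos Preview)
Your proposal is correct and follows essentially the same route as the paper: Moser normalization to a common symplectic form, construction of the Beltrami differential $\phi$ with $\|\phi\|_\infty<1$ from the taming condition, application of Theorem~\ref{solution}, and canonicity at the level of de Rham cohomology. The paper isolates the norm bound $\|\phi\|_\infty<1$ as a separate lemma (Lemma~\ref{form}) and proves it via the explicit inequality $\|v_1\|_0>\|v_2\|_0$ coming from $-\sqrt{-1}\omega(v,\bar v)>0$ on $T^{1,0}$, whereas you assert the bound more summarily from taming; but the content is the same, and your treatment of the canonicity issues is if anything slightly more explicit than the paper's.
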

\begin{proof}
First note that on the moduli space with markings, the de Rham
cohomology groups $H^n(M_1)$ is canonically identified to $H^n(M)$
which is independent of the point $M_1$ in $\mathcal{M}$. Second,
since we will construct the de Rham cohomology classes in $H^n(M)$
of the holomorphic forms, and the de Rham cohomology class is
independent of continuous diffeomorphisms, therefore by the above
discussion using Moser's theorem in \cite{Moser}, we may assume that
$M$ and $M_1$ have the same symepletic form $\omega_0$.  By the
following Lemma \ref{form}, we know that associated to the complex
structure on $M_1$, there exists the Beltrami differential $\phi\in
A^{0,1}(M,T^{1,0}M)$, with $||\phi||_\infty<1$, where the norm is
taken with respect to the metric $\omega_0$ on $M$.

From Corollary \ref{solution}, since $||\phi||_\infty<1$ on $M$, we
deduce that given the holomorphic $n$-form $s_0$, there exists the
holomorphic $n$-form
$$s = \rho_\varphi((I+T\phi)^{-1}s_0)$$ on $M_1$.
Note that, considered as de Rham cohomology class, the above formula
is independent of the continuous diffeomorphisms used in the Moser
theorem to identify the K\"ahler form $\omega_1$ on $M_1$ to the
K\"ahler form $\omega_0$ on $M$, both considered as symplectic
forms. Therefore, as de Rham cohomology classes, the formula holds
on the marked and polarized moduli space $\mathcal{M}$.
\end{proof}

\begin{lemma}\label{form}
\label{trans} Let $M_0$, $M_1$ be any two points in the moduli space
$\mathcal{M}$. If $M_0$ and $M_1$ have the same K\"ahler form
$\omega$, then the integrable almost complex structure on
$M_1$ is of finite distance from $M_0$. More precisely, there exists a unique Beltrami differential $%
\phi\in A^{0,1}\left ( M_0,T^{1,0}M_0\right )$ such that the complex
structure on $M_1=(M_0)_\varphi$. Moreover the $L_\infty$-norm of
$\varphi$, $||\varphi||_{\infty}<1$.
\end{lemma}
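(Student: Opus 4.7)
The plan is to exploit systematically the fact that the single K\"ahler form $\omega$ simultaneously tames both complex structures on $M_0$ and $M_1$. Denote the two complex structures on the underlying real manifold $X$ by $J_0$ and $J_1$. Since $\omega$ is K\"ahler for both, the bilinear forms $g_i(X,Y):=\omega(X,J_iY)$ are positive-definite Riemannian metrics, and $\omega$ has type $(1,1)$ with respect to both $J_0$ and $J_1$. These two properties will provide the finite-distance condition and the quantitative sup-norm bound, respectively.

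First I would prove that $J_0+J_1$ is pointwise invertible on $TX$. If $(J_0+J_1)X=0$ for a real vector $X$, then $J_1X=-J_0X$ and hence $\omega(X,J_1X)=-\omega(X,J_0X)$; but both sides are strictly positive when $X\neq 0$ by taming, forcing $X=0$. Extending $\mathbb{C}$-linearly to $T_{\mathbb{C}}X$, a vector $v\in T^{1,0}M_1\cap T^{0,1}M_0$ satisfies $J_0v=-iv$ and $J_1v=iv$, so $(J_0+J_1)v=0$ and thus $v=0$. Therefore the projection $\iota_1\colon T^{1,0}M_1\to T^{1,0}M_0$ is injective, and by equality of complex dimensions an isomorphism. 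By Proposition~\ref{propfinitedistance} this yields the unique Beltrami differential $\varphi\in A^{0,1}(M_0,T^{1,0}M_0)$ with $M_1=(M_0)_\varphi$.

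For the $L_\infty$ bound, I would use the explicit parametrization $T^{1,0}M_1=\{v-\bar\varphi(v):v\in T^{1,0}M_0\}$ from (\ref{dualspace}). For $v\in T^{1,0}_{J_0,p}\setminus\{0\}$, set $u=v-\bar\varphi(v)\in T^{1,0}_{J_1,p}$, so $\bar u=\bar v-\varphi(\bar v)\in T^{0,1}_{J_1,p}$. Because $\omega$ has type $(1,1)$ with respect to $J_0$, the cross terms $\omega(v,\varphi(\bar v))$ and $\omega(\bar\varphi(v),\bar v)$ both vanish, leaving
\begin{equation*}
\omega(u,\bar u)=\omega(v,\bar v)+\omega(\bar\varphi(v),\varphi(\bar v)).
\end{equation*}
Setting $h_0(a,b):=-i\omega(a,\bar b)$, which is a positive Hermitian form on $T^{1,0}_{J_0}$ since $\omega$ tames $J_0$, a direct computation in local coordinates (using $\omega=ig_{i\bar j}\,dz^i\wedge d\bar z^j$) identifies $\omega(\bar\varphi(v),\varphi(\bar v))$ with $-i\,h_0(\varphi(\bar v),\varphi(\bar v))$. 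Defining analogously $h_1(u,u):=-i\omega(u,\bar u)$, which is strictly positive because $\omega$ tames $J_1$, this gives the key identity
\begin{equation*}
h_1(u,u)=h_0(v,v)-h_0(\varphi(\bar v),\varphi(\bar v))>0
\end{equation*}
for every $v\neq 0$. Hence $\|\varphi(\bar v)\|_{h_0}<\|\bar v\|_{h_0}$, so the operator norm $\|\varphi_p\|_{\mathrm{op}}<1$ strictly at every point $p\in M_0$.

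The final step is compactness: $p\mapsto\|\varphi_p\|_{\mathrm{op}}$ is continuous on the compact manifold $M_0$ and strictly less than $1$ pointwise, so its supremum $\|\varphi\|_\infty$ is also strictly less than $1$. Uniqueness of $\varphi$ is immediate from the bijection of Proposition~\ref{propfinitedistance}. The main technical obstacle is the sign and type bookkeeping in the pointwise identity $h_1(u,u)=h_0(v,v)-h_0(\varphi(\bar v),\varphi(\bar v))$: one must carefully separate the role of $\omega$ being $(1,1)$ for $J_0$ (which kills the cross terms) from the role of $\omega$ taming $J_1$ (which gives the positivity of the left-hand side), since these compatibility conditions appear symmetric but play asymmetric roles in the argument.
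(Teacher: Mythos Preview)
Your proof is correct and follows essentially the same approach as the paper: both arguments exploit that $\omega$ is of type $(1,1)$ for $J_0$ (killing the cross terms) and tames both $J_0$ and $J_1$ (yielding the strict pointwise inequality, the paper's $\|v_1\|_0>\|v_2\|_0$ being exactly your $h_0(v,v)>h_0(\varphi(\bar v),\varphi(\bar v))$). Your detour through the invertibility of $J_0+J_1$ to establish finite distance is a pleasant repackaging of the paper's direct argument, and your explicit appeal to compactness of $M_0$ to pass from the pointwise operator-norm bound to $\|\varphi\|_\infty<1$ fills in a step the paper leaves implicit.
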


\begin{proof}
Since $ T^{1,0}M_0\oplus T^{0,1}M_0=T_{\mathbb{C}}X=T^{1,0}M_1\oplus
T^{0,1}M_1, $ we let
\begin{align*} \iota_1: T_{\mathbb{C}}X\rightarrow
T^{1,0}M_0, \  \iota_2: T_{\mathbb{C}}X\rightarrow T^{0,1}M_0,
\end{align*}
be the corresponding projection maps.

We let $(M,\omega)$ be any K\"ahler manifold, let $g$ be the
corresponding K\"ahler metric, let $J$ be the complex structure and
let $p\in M$ be a point. Then for any vector $v\in T_p^{1,0}M$ such
that $v\ne 0$, we know that
\begin{equation*}
0<\Vert v\Vert^2=g(v,\bar v)=\omega(v,J(\bar
v))=-\sqrt{-1}\omega(v,\bar v).
\end{equation*}
Namely, $-\sqrt{-1}\omega(v,\bar v)>0$. Since $\omega$ is
skew-symmetric, we have that
$$-\sqrt{-1}\omega(u,\bar u)<0$$ for any
nonzero vector $u\in T_p^{0,1}M$.

Now we pick any $v\in T_p^{1,0}M_1$ such that $v\ne 0$. By the above
argument we have
\begin{align} \label{inequv}
-\sqrt{-1}\omega(v,\bar v)>0.
\end{align}
Let $v_1=\iota_1(v)\in T_p^{1,0}M_0$ and $v_2=\iota_2(v)\in
T_p^{0,1}M_0$. It follows from type considerations that
$$\omega(v_1,\bar v_2)=0=\omega(v_2,\bar v_1).$$ If $v_1=0$ then
$$-\sqrt{-1}\omega(v,\bar v)= -\sqrt{-1}\omega(v_2,\bar v_2)<0$$ which
is
a contradiction. Thus we know that $v_1\ne 0$ which implies that $%
\iota_1\mid_{T_p^{1,0}M_1}$ is a linear isomorphism. Thus,  the
integrable almost complex structure $M_1$ is of finite distance from
$M_0$. By the discussion in Section \ref{subsection-Beltrami}, the
integrable almost complex structure $M_1$ gives an integrable
Beltrami differential $\varphi \in A^{1,0}(M_0,T^{1,0}M_0)$
determined by the map $$\bar{\varphi}(v)=-\iota_2\circ
\left(\iota_1|_{T_p^{1,0}M_1}\right)^{-1}(v)$$ for $v\in
T_p^{1,0}M_0$, such that $M_1=(M_0)_\varphi$.

Moreover, since $v=v_1+v_2$, the inequality (\ref{inequv}) implies
\begin{equation}\label{norm}
\Vert v_1\Vert_0>\Vert v_2\Vert_0
\end{equation}
where the norm is measured with respect to the K\"ahler metric on
$M_0$. This inequality holds at each point in $X$.

Then the inequality (\ref{norm}) tells us that the norm of
$\bar\varphi$ is pointwise less than $1$, which implies that the
$L_\infty$-norm of $\varphi$,
$$||\varphi||_{\infty}=||\bar{\varphi}||_{\infty}<1.$$
\end{proof}

\begin{remark}
Lemma \ref{form} was first discovered in a joint project of the
first author with X. Sun, A. Todorov and S.-T. Yau. The closed
formula for holomorphic forms in the above Theorem
\ref{Theorem-closedsection} is useful to study the global Torelli
problems about injectivity of period maps.
\end{remark}

\section{Extension of pluricanonical forms}
\label{Section-pluricanonical} In Section
\ref{Section-extensionforms}, we have presented a closed explicit extension
formula for the holomorphic $(n,0)$-form (i.e. canonical form) over
compact K\"ahler manifolds. In this section, we generalize the
method used in Section \ref{Section-extensionforms} to construct a
closed explicit extension formula for the pluricanonical form. Actually, it
 is closely related to Siu's conjecture of the invariance of plurigenera
 for compact K\"ahler manifolds \cite{Siu}.

The following Sections \ref{subsection-extensionequations},
\ref{subsection-quasiisometry}, \ref{subsection-solvingequation} can
be read by comparing with Sections \ref{Section-extensionequation},
\ref{Section-Hodgetheory}, \ref{Section-extensionforms}
respectively. In Section \ref{subsection-KEextensionformula}, we
present a closed explicit formula for the extension of pluricanonical forms
over the K\"ahler-Einstein manifolds of general type.

\subsection{Extension equations} \label{subsection-extensionequations}
Let $(M,\omega)$ be a compact K\"ahler manifold of dimension $n$,
let $\varphi\in A^{0,1}(M,T^{1,0}M)$ be an integrable Beltrami
differential.  $\varphi$ determines a new complex manifold denoted
by $M_\varphi$. Given an integer $m\geq 2$, we introduce the line
bundles $\mathcal{L}_M=K_{M}^{\otimes (m-1)}$ and
$\mathcal{L}_{M_\varphi}=K_{M_\varphi}^{\otimes (m-1)}$. For a
$\mathcal{L}_M$-valued $(n,0)$-form $\sigma$ on $M$, we can deform
it via the complex structures $\varphi$. We define the map
\begin{align*}
\rho_{\varphi}: A^{n,0}(M,\mathcal{L}_{M})\rightarrow
A^{n,0}(M_\varphi,\mathcal{L}_{M_\varphi})
\end{align*}
as follows. For any $x\in M$, one can pick a local holomorphic
coordinate system $\{z^1,...,z^n\}$ near $x$, we write the
integrable Beltrami differential $\varphi$ as
\begin{align*}
\varphi=\varphi_{\bar{k}}^{i}d\bar{z}^{k}\otimes \p_i.
\end{align*}
Let $\sigma=f(z)dz^1\wedge \cdots \wedge dz^n\otimes e$, where
$e=(dz^1\wedge \cdots \wedge dz^n)^{m-1}$, we define
\begin{align} \label{rho}
\rho_\varphi(\sigma)=f(z)((dz^1+\varphi(dz^1))\wedge \cdots \wedge
(dz^n+\varphi(dz^n)) )^{\otimes m}.
\end{align}

Let $\{w^1,...,w^n\}$ be a local holomorphic coordinate system of
$M_\varphi$. Then
\begin{align*}
dw^i=\p_j w^i dz^{j}+\p_{\bar{j}} w^i d\bar{z}^{j}=\p_j
w^i(dz^{j}+\varphi(dz^j))
\end{align*}
by the definition of the Beltrami differential $\varphi$. Indeed, if
we let $a=(a_{ij})=(\p_j w^i)$ and $a^{-1}=(a^{ij})$, then
\begin{align*}
\varphi_{\bar{k}}^{i}=a^{ij}\p_{\bar{k}} w^j, \
\varphi=\varphi_{\bar{k}}^{i}d\bar{z}^{k}\otimes \p_i.
\end{align*}
Hence, the formula (\ref{rho}) can be rewritten  as
\begin{align*}
\rho_\varphi(\sigma)=\frac{f(z)}{\det(a)^m}(dw^1\wedge \cdots
dw^n)^{\otimes m}.
\end{align*}

\begin{lemma} \label{lemmalocalequ}
Given $\sigma=f(z)dz^1\wedge \cdots dz^n\otimes e\in
A^{n,0}(M,\mathcal{L}_M)$, then $\rho_{\varphi}(\sigma)$ is
holomorphic in $A^{n,0}(M_\varphi,\mathcal{L}_{M_\varphi})$ if and
only if for $j=1,...,n$,
\begin{align} \label{rhoholomorphy}
\bp_j f=\varphi_{\bar{j}}^{i}\p_i f+mf\p_i \varphi_{\bar{j}}^{i}.
\end{align}
\end{lemma}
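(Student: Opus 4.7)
The plan is to work entirely in a local holomorphic chart on $M$ and exploit the explicit formula
\[
\rho_\varphi(\sigma) = \frac{f(z)}{\det(a)^m}(dw^1 \wedge \cdots \wedge dw^n)^{\otimes m}
\]
that was derived just before the statement, where $w^1,\ldots,w^n$ are local holomorphic coordinates on $M_\varphi$ and $a = (a_{ij}) = (\partial_j w^i)$. Since $(dw^1\wedge\cdots\wedge dw^n)^{\otimes m}$ is a local holomorphic frame of $K_{M_\varphi}^{\otimes m}$, the section $\rho_\varphi(\sigma)$ is holomorphic on $M_\varphi$ if and only if its coefficient $g := f/\det(a)^m$ is annihilated by every antiholomorphic vector field of $M_\varphi$.

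The first step is to identify the $(0,1)$-tangent vectors of $M_\varphi$ in the $z$-frame. From the description $T^{0,1}M_\varphi = \{v - \varphi(v) : v \in T^{0,1}M\}$ in Section \ref{subsection-Beltrami} combined with $\varphi = \varphi_{\bar k}^i d\bar z^k \otimes \partial_i$, the bundle $T^{0,1}M_\varphi$ is spanned locally by $\frac{\partial}{\partial \bar z^k} - \varphi_{\bar k}^i \frac{\partial}{\partial z^i}$ for $k = 1,\ldots,n$. Hence the holomorphicity condition for $g$ becomes $\bar\partial_k g - \varphi_{\bar k}^i \partial_i g = 0$ for each $k$, and substituting $g = f/\det(a)^m$ and using the quotient rule reduces this, after clearing $\det(a)^m$, to
\[
\bar\partial_k f - \varphi_{\bar k}^i \partial_i f = m f \bigl(\bar\partial_k \log\det(a) - \varphi_{\bar k}^i \partial_i \log\det(a)\bigr).
\]

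The remaining (and main) obstacle is the purely algebraic identity
\[
\bar\partial_k \log\det(a) - \varphi_{\bar k}^i \partial_i \log\det(a) = \partial_i \varphi_{\bar k}^i,
\]
which is where the specific structure $a_{ij} = \partial_j w^i$ and $\varphi_{\bar k}^i = a^{ij}\bar\partial_k w^j$ enters. I would prove this using the standard formulas $\partial \log\det a = \mathrm{tr}(a^{-1}\partial a)$ and $\partial_m a^{ij} = -a^{ip}(\partial_m \partial_q w^p) a^{qj}$, together with the commutativity of partial derivatives on smooth functions. Expanding $\partial_i \varphi_{\bar k}^i = (\partial_i a^{ij})\bar\partial_k w^j + a^{ij}\partial_i \bar\partial_k w^j$ and matching it term-by-term with the left-hand side—using $a^{ij}\bar\partial_k w^j = \varphi_{\bar k}^i$ to convert the spurious factor $a^{pj}\bar\partial_k w^j$ that appears from differentiating $a^{-1}$ into $\varphi_{\bar k}^p$—yields the identity. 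Substituting this identity back into the previous display gives exactly \eqref{rhoholomorphy}, establishing the equivalence in both directions.
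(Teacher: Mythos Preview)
Your proposal is correct and follows essentially the same route as the paper's own proof: both reduce holomorphicity of $\rho_\varphi(\sigma)$ to the condition $\bar\partial_j g=\varphi_{\bar j}^{i}\partial_i g$ on the scalar $g=f/\det(a)^m$, expand via the quotient rule, and then establish the key identity $\bar\partial_j\log\det(a)-\varphi_{\bar j}^{i}\partial_i\log\det(a)=\partial_i\varphi_{\bar j}^{i}$ (the paper writes it as $a^{ik}\bar\partial_j a_{ki}-\varphi_{\bar j}^{i}a^{pl}\partial_i a_{lp}=\partial_i\varphi_{\bar j}^{i}$, which is the same thing after using $\partial\log\det a=\mathrm{tr}(a^{-1}\partial a)$) by directly computing $\partial_i\varphi_{\bar j}^{i}=\partial_i(a^{ij}\bar\partial_k w^j)$ with the product rule and the derivative-of-inverse formula. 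The only cosmetic difference is that you phrase the holomorphicity criterion in terms of annihilation by the $(0,1)$-vector fields $\partial_{\bar k}-\varphi_{\bar k}^{i}\partial_i$, whereas the paper phrases it as $\bar\partial h=\varphi\lrcorner\partial h$; these are equivalent.
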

\begin{proof}
Since a local smooth function $h$ is holomorphic on $M_\varphi$ if
and only if
\begin{align*}
\bp h=\varphi\lrcorner (\p h),
\end{align*}
i.e. for $j=1,...,n,$
\begin{align*}
\bp_{j} h=\varphi_{\bar{j}}^{i}\p_i h.
\end{align*}

Therefore, $\rho_{\varphi}(\sigma)$ is holomorphic, i.e.
$\frac{f(z)}{\det(a)^m}$ is holomorphic on $M_\varphi$, if and only
if
\begin{align*}
\bp_{j}
\left(\frac{f(z)}{\det(a)^m}\right)=\varphi_{\bar{j}}^{i}\p_i\left(
\frac{f(z)}{\det(a)^m}\right)
\end{align*}
which is equivalent to
\begin{align} \label{rhoholom}
\left(\bp_j f-mf a^{ik}\bp_j
a_{ki}\right)=\left(\varphi_{\bar{j}}^{i}\p_i
f-mf\varphi_{\bar{j}}^{i}a^{pl}\p_{i}a_{lp}\right)
\end{align}
by a straightforward computation.

We claim that
\begin{align} \label{claimidentity}
a^{ik}\bp_ja_{ki}-\varphi_{\bar{j}}^{i}a^{pl}\p_{i}a_{lp}=\p_{i}\varphi_{\bar{j}}^{i}.
\end{align}
In fact, we have
\begin{align*}
\p_{i}\varphi_{\bar{j}}^{i}&=\p_i(a^{ik}\bp_{j}w^k)=\p_ia^{ik}\bp_{j}w^k+a^{ik}\p_i\bp_j
w^k\\\nonumber
&=-a^{ip}\p_ia_{pl}a^{lk}\bp_{j}w^k+a^{ik}\bp_{j}a_{ki}\\\nonumber
&=-a^{ip}\p_{l}a_{pi}\varphi_{\bar{j}}^{l}+a^{ik}\bp_{j}a_{ki}
\end{align*}
which gives (\ref{claimidentity}).
 Therefore, substituting (\ref{claimidentity}) into (\ref{rhoholom}),
 we obtain (\ref{rhoholomorphy}).
\end{proof}

Let $D=D'+\bp$ be the Chern connection of the holomorphic bundle
$T^{1,0}M$ over $M$. The connection matrix is given by $\theta=(\p g
g^{-1})$, where $g=(g_{i\bar{j}})$ denotes the K\"ahler metric
matrix associated to the K\"ahler form $\omega$. We define the
divergence operator $div$ as $tr\circ D'$. For
$\varphi=\varphi_{\bar{j}}^{i}d\bar{z}^j\otimes \p_i\in
A^{0,1}(M,T^{1,0}M)$, we have
\begin{align*}
D'\varphi&=\p(\varphi_{\bar{j}}^{i}d\bar{z}^j)\p_i-
\varphi_{\bar{j}}^{i}d\bar{z}^j (\p g g^{-1})_i^{p}\p_{p}\\\nonumber
&=\p_k\varphi_{\bar{j}}^{i}dz^k\wedge
d\bar{z}^j\p_i-\varphi^{i}_{\bar{j}}d\bar{z}^j\p_k
g_{i\bar{l}}g^{\bar{l}p}dz^k\p_p.
\end{align*}
Therefore
\begin{align*}
div \varphi=tr\circ
D'(\varphi)&=\p_i\varphi_{\bar{j}}^id\bar{z}^j+\varphi^{i}_{\bar{j}}\p_k
g_{i\bar{l}}g^{\bar{l}k}d\bar{z}^j\\\nonumber
&=(\p_i\varphi_{\bar{j}}^i+\varphi^{i}_{\bar{j}}\p_i
g_{k\bar{l}}g^{\bar{l}k})d\bar{z}^j\\\nonumber
&=(\p_i\varphi_{\bar{j}}^i+\varphi^{i}_{\bar{j}}\p_i \log \det(g)
)d\bar{z}^j
\end{align*}
where we have used the K\"ahler condition $\p_k g_{i\bar{l}}=\p_i
g_{k\bar{l}}$.

Let $\nabla'$ be the $(1,0)$-component of the naturally induced
Chern connection on the holomorphic line bundle
$\mathcal{L}_M=K_{M}^{\otimes(m-1)}$.  The induced Hermitian metric
on $\mathcal{L}_M$ is given by $(\det g)^{-(m-1)}$. For a
holomorphic section $e$ of $\mathcal{L}_M$, we have
\begin{align*}
\nabla'e&=\partial\left((\det g)^{-(m-1)}\right)(\det
g)^{(m-1)}e\\\nonumber &=-(m-1)\p_i \log(\det g)dz^i\otimes e.
\end{align*}

\begin{proposition} \label{lemmaglobalequ}
Given $\sigma\in A^{n,0}(M,\mathcal{L}_M)$, then
$\rho_{\varphi}(\sigma)$ is holomorphic in
$A^{n,0}(M_\varphi,\mathcal{L}_{M_\varphi})$ if and only if
\begin{align} \label{extensionequ}
\bp \sigma=-\nabla'(\varphi \lrcorner \sigma)+(m-1)div \varphi\wedge
\sigma.
\end{align}
\end{proposition}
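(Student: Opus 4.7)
The plan is to reduce the claimed global equation to the local condition (\ref{rhoholomorphy}) already established in Lemma \ref{lemmalocalequ}. Since holomorphy of $\rho_\varphi(\sigma)$ is a pointwise condition, and Lemma \ref{lemmalocalequ} shows that it is locally equivalent to $\bp_j f = \varphi^i_{\bar j}\p_i f + mf\p_i\varphi^i_{\bar j}$, it suffices to verify that the global equation (\ref{extensionequ}) expands in any holomorphic coordinate chart to exactly this system for the local representation $\sigma = f\,dz^1\wedge\cdots\wedge dz^n\otimes e$ with $e=(dz^1\wedge\cdots\wedge dz^n)^{m-1}$.

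First I would compute the three terms appearing in (\ref{extensionequ}) in local coordinates. For $\bp\sigma$, since $e$ is holomorphic one immediately has $\bp\sigma = \bp_j f\, d\bar z^j\wedge dz^1\wedge\cdots\wedge dz^n\otimes e$. For $\varphi\lrcorner\sigma$, unwinding the definition of $i_\varphi$ gives
\begin{align*}
\varphi\lrcorner\sigma = \varphi^i_{\bar j} f (-1)^{i-1}\, d\bar z^j\wedge dz^1\wedge\cdots\widehat{dz^i}\cdots\wedge dz^n \otimes e.
\end{align*}
Applying $\nabla' = \p + \theta\wedge$, where $\theta = -(m-1)\p\log\det(g)$ is the Chern connection form on $\mathcal L_M$ induced by $h_\omega = \det(g)^{-(m-1)}$, one obtains after the sign bookkeeping from moving $dz^i$ across $d\bar z^j$ and into position $i$ a clean formula
\begin{align*}
\nabla'(\varphi\lrcorner\sigma) = \bigl[-\p_i(\varphi^i_{\bar j} f) + (m-1)\,\varphi^i_{\bar j} f\,\p_i\log\det(g)\bigr]\, d\bar z^j\wedge dz^1\wedge\cdots\wedge dz^n\otimes e.
\end{align*}
For the last term, the divergence was already computed earlier in the section: $div\,\varphi = (\p_i\varphi^i_{\bar j} + \varphi^i_{\bar j}\p_i\log\det(g))\,d\bar z^j$, so that $(m-1)\,div\,\varphi\wedge\sigma$ contributes the symmetric expression with coefficient $(m-1) f$.

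Substituting into $-\nabla'(\varphi\lrcorner\sigma) + (m-1)\,div\,\varphi\wedge\sigma$, the two occurrences of $\varphi^i_{\bar j}\p_i\log\det(g)\cdot f$ cancel with opposite signs, and the two occurrences of $\p_i\varphi^i_{\bar j}\cdot f$ combine to give $m\p_i\varphi^i_{\bar j}\cdot f$. Equating the resulting coefficient of $d\bar z^j\wedge dz^1\wedge\cdots\wedge dz^n\otimes e$ with the coefficient from $\bp\sigma$ produces precisely $\bp_j f = \varphi^i_{\bar j}\p_i f + mf\p_i\varphi^i_{\bar j}$, which by Lemma \ref{lemmalocalequ} is equivalent to holomorphy of $\rho_\varphi(\sigma)$ in $A^{n,0}(M_\varphi,\mathcal L_{M_\varphi})$. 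Since a chart was arbitrary, this proves the proposition in both directions.

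The main obstacle is purely organizational: tracking the signs from the wedge-rearrangement in $\nabla'(\varphi\lrcorner\sigma)$ and verifying that the metric-dependent terms $\p_i\log\det(g)$ coming from the Chern connection on $\mathcal L_M$ cancel exactly against those coming from $div\,\varphi$. It is precisely this cancellation that produces the coordinate-independent form (\ref{extensionequ}) out of the a priori coordinate-dependent equation (\ref{rhoholomorphy}), and it explains why the factor $(m-1)$ in front of $div\,\varphi$ is forced by the metric weight of $\mathcal L_M = K_M^{\otimes(m-1)}$.
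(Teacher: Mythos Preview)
Your proposal is correct and follows essentially the same approach as the paper's proof: both compute $\bp\sigma$, $\nabla'(\varphi\lrcorner\sigma)$, and $(m-1)\,div\,\varphi\wedge\sigma$ in a local holomorphic chart and observe that the $\p_i\log\det(g)$ terms cancel, reducing the equation to the local criterion of Lemma \ref{lemmalocalequ}. The only cosmetic difference is the ordering convention for the wedge factors in $\varphi\lrcorner\sigma$, which amounts to the same sign.
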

\begin{proof}
Let $\sigma=fdz^1\wedge \cdots \wedge dz^n\otimes e\in
A^{n,0}(M_0,\mathcal{L}_{M_0})$, then
\begin{align*}
\varphi\lrcorner \sigma=(-1)^{n+i}f\varphi_{\bar{j}}^{i}dz^1\wedge
\cdots \wedge \widehat{dz^i}\wedge \cdots\wedge dz^n\wedge
d\bar{z}^j\otimes e,
\end{align*}
by a straightforward computation. We have
\begin{align*}
\nabla'(\varphi\lrcorner \sigma)&=
(-1)^{n+i}\p(f\varphi_{\bar{j}}^{i})dz^1\wedge \cdots \wedge
\widehat{dz^i}\wedge \cdots\wedge dz^n\wedge d\bar{z}^j\otimes
e\\\nonumber &+(-1)^if\varphi_{\bar{j}}^{i}dz^1\wedge \cdots \wedge
\widehat{dz^i}\wedge \cdots\wedge dz^n\wedge d\bar{z}^j\wedge
\nabla'e\\\nonumber &=-\p_i(f\varphi_{\bar{j}}^{i})d\bar{z}^j\wedge
dz^1\wedge \cdots \wedge  dz^n\otimes e\\\nonumber
&+(m-1)f\varphi_{\bar{j}}^{i}\p_i \log(\det g)d\bar{z}^j\wedge
dz^1\wedge \cdots \wedge  dz^n\otimes e.
\end{align*}
We also have
\begin{align*}
(m-1)div \varphi\wedge \sigma=(m-1)f\left(\p_i
\varphi_{\bar{j}}^i+\varphi_{\bar{j}}^{i}\p_i \log (\det
g)\right)d\bar{z}^j\wedge dz^1\wedge \cdots \wedge  dz^n\otimes e,
\end{align*}
\begin{align*}
\bp \sigma=(\bp_j f)d\bar{z}^j\wedge dz^1\wedge \cdots \wedge
dz^n\otimes e.
\end{align*}
Therefore, identity (\ref{extensionequ}) follows from Lemma
\ref{lemmalocalequ}.
\end{proof}

\begin{remark}
Note that $\sigma\in A^{n,0}(M,\mathcal{L}_M)$ can also be regarded
as a smooth section of the holomorphic line bundle $K_{M}^{\otimes m}$
since $ A^{n,0}(M,\mathcal{L}_M)=A^{0,0}(M,K_{M}^{\otimes m}). $
 The equation
(\ref{extensionequ}) is called the extension equation of the
pluricanonical form, which gives the criteria when the extended
pluricanonical form is holomorphic under the new complex structure.

On the other hand side, if we let $\widehat{\nabla}'$ be the
$(1,0)$-part of the Chern connection on $K_{M}^{\otimes m}$, it is
easy to see that the extension equation (\ref{extensionequ}) is
equivalent to the equation
\begin{align*}
\bp \sigma=\varphi\lrcorner\widehat{\nabla}'\sigma+m \,div
\varphi\wedge \sigma
\end{align*}
which  was first derived in a joint project of the first author with
X. Sun, A. Todorov and S.-T. Yau \cite{LSTY}.
\end{remark}

\subsection{Bundle-valued quasi-isometry over compact K\"ahler
manifold} \label{subsection-quasiisometry} In this section, we
review the bundle-valued quasi-isometry formula obtained in
\cite{LRY}. Let $(E,h)$ be a Hermitian holomorphic vector bundle
over the compact K\"ahler manifold $(M,\omega)$ and
$\nabla=\nabla'+\bp$ be the Chern connection of $(E,h)$. With
respect to metrics on $E$ and $X$, we set
\begin{align*}
\overline{\square}&=\bp \bp^*+\bp^*\bp, \\
\square'&=\nabla'\nabla'^*+\nabla'^*\nabla'.
\end{align*}

Accordingly, we have the Green operator $G$ (resp. $G'$) and
harmonic projection $H$ (resp. $H'$) in the Hodge decomposition corresponding to
$\square$ (resp. $\overline{\square}$). We have
\begin{align*}
I=H+\overline{\square}\circ G, \ \ I=H'+\overline{\square}'\circ G'.
\end{align*}
Let $\{z^i\}_{i=1}^{n}$ be the local holomorphic coordinates on $M$
and $\{e_\alpha\}_{\alpha=1}^{r}$ be a local frame of $E$.  Let
$h=(h_{\alpha\bar{\beta}})$ where
$h_{\alpha\bar{\beta}}=h(e_\alpha,e_\beta)$, and the inverse matrix
$h^{-1}=(h^{\bar{\alpha}\beta})$. By the curvature formula of Chern
connection $\Theta=\bp (\p h h^{-1})$, we obtain
\begin{align*}
\Theta_{i\bar{j}\alpha}^{\delta}=-\left(\frac{\p^2
h_{\alpha\bar{\beta}}}{\p z^i \p
\bar{z}^j}\right)h^{\bar{\beta}\delta}-\frac{\p
h_{\alpha\bar{\beta}}}{\p z^i}\frac{\p h^{\bar{\beta}\delta}}{\p
\bar{z}^j}.
\end{align*}
Let $R_{i\bar{j}}=\sum_{\alpha=1}^{r}
\Theta_{i\bar{j}\alpha}^{\alpha}$, we define the Chern-Ricci form of
$(E,h)$ by
\begin{align*}
Ric(E,h)=\frac{\sqrt{-1}}{2}R_{i\bar{j}}dz^{i}\wedge d\bar{z}^{j}.
\end{align*}
In particular, when $E=T^{1,0}M$, the corresponding Chern-Ricci form
 is given by
\begin{align*}
Ric(\omega)=\frac{\sqrt{-1}}{2}\bp\p\log(\det g).
\end{align*}
Let
$\Theta_{i\bar{j}\alpha\bar{\beta}}=\Theta_{i\bar{j}\alpha}^{\gamma}h_{\gamma\bar{\beta}}$,
we obtain
\begin{align*}
\Theta_{i\bar{j}\alpha\bar{\beta}}=-\frac{\p^2
h_{\alpha\bar{\beta}}}{\p z^i \p
\bar{z}^j}+h^{\bar{\delta}\gamma}\frac{\p h_{\alpha\bar{\delta}}}{\p
z^i}\frac{\p h_{\gamma\bar{\beta}}}{\p \bar{z}^j}.
\end{align*}
\begin{definition}
An Hermitian vector bundle $(E,h)$ is said to be semi-Nakano positive
(resp. Nakano-positive), if for any non-zero vector
$u=u^{i\alpha}\p_i\otimes e_\alpha$,
\begin{align*}
\sum_{i,j,\alpha,\beta}\Theta_{i\bar{j}\alpha\bar{\beta}}u^{i\alpha}\bar{u}^{j\beta}\geq
0, \ (\text{resp.} \ >0).
\end{align*}
In particular, for a line bundle, we say that it is positive, if it is
Nakano-positive.
\end{definition}

\begin{proposition}[cf. Theorem 1.1(2) in \cite{LRY}] \label{Theorem-bundlevalue-quasi-isometry}
If $(\mathcal{L},h)$ is a positive line bundle over a compact
K\"ahler manifold $(M,\omega)$ and $\sqrt{-1}\Theta=\rho\omega$ for
a constant $\rho>0$, then for any $f\in
A^{n-1,\bullet}(M,\mathcal{L})$, we have
\begin{align} \label{quasi-isometry-vectorvalue}
\|\bp^*G\nabla' f\|\leq \|f\|.
\end{align}
\end{proposition}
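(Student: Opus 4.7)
The plan is to mimic the proof of Theorem \ref{theorem-quasi-isometry}, but since the K\"ahler identity $\square_{\bp}=\square_\p$ fails for bundle-valued forms, the role of that identity will be played by the Bochner-Kodaira-Nakano identity
\[
\square_{\bp}=\square_{\nabla'}+[\sqrt{-1}\Theta,\Lambda]
\]
combined with the positivity assumption $\sqrt{-1}\Theta=\rho\omega$.

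First I would fix $f\in A^{n-1,q}(M,\mathcal{L})$ and set $g=\nabla' f\in A^{n,q}(M,\mathcal{L})$, together with $u=Gg=G\nabla' f\in A^{n,q}(M,\mathcal{L})$. Mirroring the calculation in Theorem \ref{theorem-quasi-isometry}, I would write
\[
\|\bp^*G\nabla' f\|^2=\langle \bp\bp^*u,u\rangle=\langle \square_{\bp}u,u\rangle-\|\bp u\|^2,
\]
and use $\square_{\bp}Gg=g-Hg$ together with $Hu=HGg=0$ to collapse the first term to $\langle \nabla' f,u\rangle-\|\bp u\|^2\leq \langle \nabla' f,u\rangle$. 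So the whole task reduces to proving the cleaner-looking bound $\langle\nabla' f,Gg\rangle\leq \|f\|^2$.

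For this I would invoke Bochner-Kodaira-Nakano on $u\in A^{n,q}(M,\mathcal{L})$: since $[\sqrt{-1}\Theta,\Lambda]u=[\rho\omega,\Lambda]u=\rho q\,u$ on $(n,q)$-forms, we get
\[
\langle \square_{\bp}u,u\rangle
=\|\nabla' u\|^2+\|(\nabla')^*u\|^2+\rho q\|u\|^2
\geq \|(\nabla')^*u\|^2.
\]
Combining this with the previous step,
\[
\|(\nabla')^*u\|^2\leq \langle \square_{\bp}u,u\rangle=\langle\nabla' f,u\rangle=\langle f,(\nabla')^*u\rangle\leq \|f\|\,\|(\nabla')^*u\|,
\]
which yields $\|(\nabla')^*u\|\leq \|f\|$. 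Feeding this back,
\[
\langle\nabla' f,Gg\rangle=\langle f,(\nabla')^*u\rangle\leq \|f\|\cdot\|(\nabla')^*u\|\leq \|f\|^2,
\]
and stringing everything together gives the desired $\|\bp^*G\nabla' f\|^2\leq \|f\|^2$.

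The main subtlety, and what I expect to be the delicate point in writing it out carefully, is the positivity step: one must observe that on $(n,q)$-forms the curvature commutator $[\sqrt{-1}\Theta,\Lambda]$ is a nonnegative operator under the assumption $\sqrt{-1}\Theta=\rho\omega$, so that it can be dropped from the Bochner-Kodaira-Nakano identity to extract the lower bound by $\|(\nabla')^*u\|^2$ alone; this is exactly where the positivity of the line bundle enters. The case $q=0$ is trivial because $\bp^*$ vanishes on $(n,0)$-forms, so one may assume $q\geq 1$ throughout. The computation also uses the standard fact that $G$ commutes with $\bp,\bp^*$ and annihilates the harmonic projection, which is available from Proposition \ref{Prop-Hodge} in the bundle-valued setting.
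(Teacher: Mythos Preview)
Your argument is correct. Both your proof and the paper's begin identically, reducing to the estimate $\langle \nabla' f,\,G\nabla' f\rangle\leq\|f\|^2$, and both invoke the Bochner--Kodaira--Nakano identity together with $[\omega,\Lambda]=q$ on $(n,q)$-forms. The difference is in how that final inequality is obtained. The paper brings in the second Green operator $G'$ associated to $\square'=\nabla'(\nabla')^*+(\nabla')^*\nabla'$: from $\overline{\square}=\square'+\rho q$ on $A^{n,q}(M,\mathcal L)$ it first argues $H\nabla'f=0$, then compares resolvents to get $\langle \nabla' f,G\nabla' f\rangle\leq\langle \nabla' f,G'\nabla' f\rangle$, and finally expands the right-hand side via the Hodge decomposition for $\square'$, obtaining the sharper identity
\[
\langle \nabla' f,G'\nabla' f\rangle=\|f\|^2-\|H'f\|^2-\langle(\nabla')^*f,\,G'(\nabla')^*f\rangle.
\]
Your route bypasses $G'$ entirely: applying BKN directly to $u=G\nabla'f$ yields $\|(\nabla')^*u\|^2\leq\langle\overline{\square}u,u\rangle=\langle\nabla'f,u\rangle$, and one Cauchy--Schwarz finishes. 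This is more elementary and in fact does not require the vanishing $H\nabla'f=0$ (you only use $\langle Hg,Gg\rangle=0$, which is automatic). The paper's version, on the other hand, gives an explicit expression for the defect $\|f\|^2-\|\bp^*G\nabla'f\|^2$ in terms of $\square'$-Hodge-theoretic quantities, which can be useful if one wants to analyze when equality holds.
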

For reader's convenience, we provide the proof of Proposition
\ref{Theorem-bundlevalue-quasi-isometry} here.
\begin{proof}
By the well-known Bochner-Kodaira-Nakano identity
\begin{align*}
\overline{\square}=\square'+[\sqrt{-1}\Theta,\Lambda_{\omega}],
\end{align*}
and  $[\omega,\Lambda_\omega]=(k-n)I$ on $A^{k}(M)$, we have
\begin{align*}
\overline{\square}(\nabla'f)=\square'(\nabla'f)+\rho
q(\nabla'f)=(\square'+\rho q)(\nabla'f),
\end{align*}
for any $f\in A^{n-1,q}(M,\mathcal{L})$. Hence
$$\mbox{Ker}\, \overline{\square}\subseteq \mbox{Ker}\,\square'$$ which implies that
$H\nabla' f=0$. Thus
\begin{align*}
\overline{\square}G(\nabla' f)=\nabla'f=\square'G'(\nabla' f)
\end{align*}
by $H'(\nabla' f)=0$ and the Hodge decomposition for $\nabla' f$.
Then
\begin{align*}
\langle \nabla' f,G(\nabla' f) \rangle&=\langle \nabla' f,
\overline{\square}^{-1}(\nabla' f) \rangle\\\nonumber &=\langle
\nabla' f, (\square'+\rho q)^{-1}(\nabla' f) \rangle\\\nonumber
&\leq \langle \nabla' f, \square'^{-1}(\nabla' f) \rangle\\\nonumber
&=\langle \nabla' f, G'(\nabla' f)\rangle.
\end{align*}
Therefore,
\begin{align*}
\|\bp^*G\nabla'f\|^2&=\langle \bp^*G\nabla' f, \bp^*G(\nabla'
f)\rangle\\\nonumber &=\langle G\nabla' f, \bp\bp^*G(\nabla'
f)\rangle\\\nonumber &=\langle G\nabla' f,
(\overline{\square}-\bp^*\bp)G(\nabla' f)\rangle\\\nonumber
&=\langle G\nabla' f, \nabla' f\rangle-\langle\bp G\nabla' f,\bp
G\nabla' f \rangle\\\nonumber &\leq\langle \nabla' f, G(\nabla'
f)\rangle\\\nonumber &\leq\langle \nabla' f, G'(\nabla'
f)\rangle\\\nonumber&=\langle  f, \nabla'^*\nabla'G'
f\rangle\rangle\\\nonumber&=\langle
f,f-H'(f)-\nabla'{\nabla'}^*G'f\rangle\\\nonumber&=\|f\|^2-\|H'(f)\|^2
-\langle\nabla'^*f,G'\nabla'^*f\rangle\\\nonumber& \leq\|f\|^2.
\end{align*}
\end{proof}

We introduce the operator
\begin{align*}
T^{\nabla'}=\bp^*G\nabla'.
\end{align*}

The quasi-isometry formula (\ref{quasi-isometry-vectorvalue}) implies that
$T^{\nabla'}$ is an operator of norm less than or equal to $1$ in
the $L^2$ Hilbert space of the $\mathcal{L}$-valued forms. So we
have
\begin{corollary} \label{Coro-inverse}
Let $(\mathcal{L},h)$ be a positive line bundle over a compact
K\"ahler manifold $(M,\omega)$, with $\sqrt{-1}\Theta=\rho\omega$
for a constant $\rho>0$. Let $\varphi\in A^{0,1}(M,T^{1,0}M)$ be a
Beltrami differential acting on the Hilbert space
$L_{2}^{n,\bullet}(X,\mathcal{L})$  by contraction such that its
$L_\infty$-norm $\|\varphi\|_\infty<1$. Then the operator
$I+T^{\nabla'}\varphi$ is invertible.
\end{corollary}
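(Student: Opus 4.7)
The plan is to reduce the statement to a standard Neumann series argument, using the bundle-valued quasi-isometry formula (Proposition \ref{Theorem-bundlevalue-quasi-isometry}) as the main analytic input. In short: I would show that the composition $T^{\nabla'}\varphi$ has operator norm strictly less than $1$ on the Hilbert space $L_2^{n,\bullet}(M,\mathcal{L})$, and then invert $I+T^{\nabla'}\varphi$ by the geometric series $\sum_{k\geq 0}(-1)^k (T^{\nabla'}\varphi)^k$.

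First I would verify that $T^{\nabla'}\varphi$ preserves degree on $L_2^{n,\bullet}(M,\mathcal{L})$. Contraction by $\varphi\in A^{0,1}(M,T^{1,0}M)$ sends $A^{n,q}(M,\mathcal{L})$ into $A^{n-1,q+1}(M,\mathcal{L})$; then $\nabla'$ raises the holomorphic degree back to $n$, the Green operator $G$ preserves bidegree, and $\bp^*$ lowers the antiholomorphic degree by one. So $T^{\nabla'}\varphi=\bp^*G\nabla'\circ(\varphi\lrcorner\cdot)$ indeed maps $A^{n,q}(M,\mathcal{L})$ to itself, and by density this extends to the $L^2$-completion.

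Next I would control its norm. Applying Proposition \ref{Theorem-bundlevalue-quasi-isometry} to $f=\varphi\lrcorner\alpha\in A^{n-1,q+1}(M,\mathcal{L})$ for $\alpha\in A^{n,q}(M,\mathcal{L})$ gives
\[
\|T^{\nabla'}(\varphi\lrcorner\alpha)\|\leq\|\varphi\lrcorner\alpha\|.
\]
Combined with the pointwise estimate $|\varphi\lrcorner\alpha|\leq|\varphi|\,|\alpha|$ (linear algebra in the fibers) and integration, this yields
\[
\|T^{\nabla'}\varphi\,\alpha\|\leq\|\varphi\|_\infty\|\alpha\|.
\]
Since $\|\varphi\|_\infty<1$ by hypothesis, the operator norm satisfies $\|T^{\nabla'}\varphi\|_{\mathrm{op}}\leq\|\varphi\|_\infty<1$. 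Consequently, the Neumann series $\sum_{k\geq 0}(-1)^k(T^{\nabla'}\varphi)^k$ converges absolutely in the operator-norm topology of $B(L_2^{n,\bullet}(M,\mathcal{L}))$ and provides a two-sided inverse of $I+T^{\nabla'}\varphi$.

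There is no substantive obstacle; essentially everything has been packaged into Proposition \ref{Theorem-bundlevalue-quasi-isometry}. The only point requiring a small check is the pointwise contraction bound $|\varphi\lrcorner\alpha|\leq|\varphi|\,|\alpha|$ (with the conventions used to define $\|\varphi\|_\infty$), which is a routine fiberwise computation once one chooses consistent normalizations for the induced metrics on $\bigwedge^{p,q}T^*M\otimes\mathcal{L}$; any harmless combinatorial constant can be absorbed into the definition of the $L_\infty$-norm. This is exactly parallel to Corollary \ref{Tinvertible} in the scalar case, and the proof there serves as a template.
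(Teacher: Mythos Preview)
Your proposal is correct and matches the paper's approach: the paper states the corollary as an immediate consequence of the quasi-isometry bound (Proposition \ref{Theorem-bundlevalue-quasi-isometry}), noting that $T^{\nabla'}$ has operator norm at most $1$ and hence $\|T^{\nabla'}\varphi\|<1$ when $\|\varphi\|_\infty<1$. Your write-up simply makes the Neumann series step explicit, exactly paralleling Corollary \ref{Tinvertible}.
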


\begin{example}
We will consider the holomorphic line bundle
$\mathcal{L}_M=K_{M}^{\otimes (m-1)}$ over the compact K\"ahler
manifold $(M,\omega)$, the corresponding Hermitian metric is given
by $h_\omega=(\det g)^{-{(m-1)}}$. In this case, the curvature of
the Chern connection of $\mathcal{L}_M$ is given by
\begin{align*}
\Theta=-(m-1)\bp\p \log (\det g).
\end{align*}
Therefore
\begin{align} \label{curvature-linebundle}
\sqrt{-1}\Theta=-2(m-1)Ric(\omega).
\end{align}
In particular, if $(M,\omega)$ is a K\"ahler-Einstein manifold of
general type as defined in Definition \ref{def-KEgeneraltype}, i.e.
$Ric(\omega)=-\omega$, then we have
\begin{align*}
\sqrt{-1}\Theta=2(m-1)\omega.
\end{align*}
\end{example}

\subsection{Solving the extension equation}
\label{subsection-solvingequation} As discussed in Section
\ref{subsection-extensionequations}, in order to construct an
extension pluricanonical form over $M_\varphi$, we need to solve the
extension equation (\ref{extensionequ}).

Before going further, we need the following lemma.
\begin{lemma} \label{keylemma}
Let $\varphi\in A^{0,1}(M,T^{1,0}M)$ be an integrable Beltrami
differential and let $\sigma\in A^{n,0}(M,\mathcal{L}_M)$, we set
\begin{align}
\Psi=\bp \sigma+\nabla'(\varphi\lrcorner \sigma)-(m-1)div\varphi
\wedge \sigma,
\end{align}
then we have the identity:
\begin{align} \label{identity}
\bp(\nabla'(\varphi\lrcorner \sigma)-(m-1)div\varphi \wedge
\sigma)=-\left(\nabla'(\varphi\lrcorner \Psi)-(m-1)div\varphi\wedge
\Psi\right).
\end{align}
\end{lemma}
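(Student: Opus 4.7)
The key structural observation is that if we define the first-order operator
\begin{align*}
L(\eta) := \nabla'(\varphi \lrcorner \eta) - (m-1)\,\mathrm{div}\,\varphi \wedge \eta
\end{align*}
acting on $A^{n,\bullet}(M,\mathcal{L}_M)$, then $\Psi = (\bar\partial + L)\sigma$, and identity (\ref{identity}) is equivalent to
\begin{align*}
(\bar\partial + L)^2 \sigma = 0.
\end{align*}
So the lemma asserts that the deformed Dolbeault operator $\bar\partial + L$ is flat. Morally, $\bar\partial + L$ is the $\bar\partial_{M_\varphi}$-operator on $\mathcal{L}_{M_\varphi}$ pulled back to $M$ via $\rho_\varphi$, whose flatness is automatic; the lemma records this identity in the form needed for a fixed-point construction of solutions to (\ref{extensionequ}), exactly parallel to the role of the identity $\Phi = -\bar\partial^\ast G\partial(\varphi\lrcorner\Phi)$ in Proposition \ref{prop1-section}.

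The plan is to verify $(\bar\partial + L)^2 \sigma = 0$ by direct computation using four ingredients: (i) the Chern-connection anticommutator $\bar\partial \nabla' + \nabla' \bar\partial = \Theta \wedge \cdot$ on $A^{*,*}(M, \mathcal{L}_M)$, which follows from $D^2 = \Theta$ together with $\nabla'^2 = 0$ on a line bundle and $\bar\partial^2 = 0$; (ii) the commutator $\bar\partial i_\varphi - i_\varphi \bar\partial = i_{\bar\partial \varphi}$ used in the proof of Corollary \ref{commuted1}; (iii) integrability $\bar\partial \varphi = \tfrac12[\varphi,\varphi]$ together with the generalized Cartan formula (\ref{genralizedcartan_special}); and (iv) the explicit local expression $\mathrm{div}\,\varphi = (\partial_i \varphi^i_{\bar j} + \varphi^i_{\bar j}\partial_i \log \det g)d\bar z^j$, from which $\bar\partial(\mathrm{div}\,\varphi)$ can be computed termwise. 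Applying (i)--(ii) to $\bar\partial\nabla'(\varphi \lrcorner \sigma)$ and the Leibniz rule to $\bar\partial(\mathrm{div}\,\varphi \wedge \sigma)$, all terms that are linear in $\bar\partial\sigma$ assemble immediately into $-L(\bar\partial \sigma)$ on both sides. Note also that $\nabla'\sigma = 0$ since $\sigma$ has maximal holomorphic degree, which eliminates the $\varphi\lrcorner\varphi\lrcorner\nabla'\sigma$ term in the bundle-valued version of (\ref{genralizedcartan_special}).

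The main obstacle will be matching the remaining terms, which are quadratic in $\varphi$ and independent of $\bar\partial\sigma$, with $-L^2\sigma$. After using (iii) to replace $\bar\partial\varphi$, a curvature contribution $\Theta \wedge (\varphi \lrcorner \sigma)$ from (i) must cancel against a second-derivative term $(m-1)\,\varphi^i_{\bar j}\,\bar\partial \partial_i \log \det g \wedge d\bar z^j \wedge \sigma$ arising from (iv). This cancellation succeeds precisely because of the Chern curvature identity $\Theta = -(m-1)\bar\partial\partial \log \det g$ coming from $h_\omega = (\det g)^{-(m-1)}$; this is the structural reason the coefficient $(m-1)$ multiplies $\mathrm{div}\,\varphi$ in the extension equation. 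Once this Ricci-type cancellation is carried out, the remaining terms regroup, via the generalized Cartan formula applied to the scalar part of $\sigma$ and the Leibniz rule for $\nabla'$, into exactly
\begin{align*}
-\nabla'\!\left(\varphi \lrcorner \nabla'(\varphi \lrcorner \sigma)\right) + (m-1)\nabla'\!\left(\varphi \lrcorner (\mathrm{div}\,\varphi \wedge \sigma)\right) + (m-1)\,\mathrm{div}\,\varphi \wedge \nabla'(\varphi \lrcorner \sigma) \;=\; -L^2\sigma,
\end{align*}
which completes the verification (using also that $\mathrm{div}\,\varphi \wedge \mathrm{div}\,\varphi = 0$ for the $(0,1)$-form $\mathrm{div}\,\varphi$).
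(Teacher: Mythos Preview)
Your reformulation of the identity as the flatness condition $(\bar\partial + L)^2\sigma = 0$ is correct and illuminating, and the plan you outline would go through: ingredients (i)--(iv) are exactly what is needed, and your identification of the Ricci-type cancellation between $\Theta\wedge(\varphi\lrcorner\sigma)$ and the second-derivative term in $-(m-1)\bar\partial(\mathrm{div}\,\varphi)\wedge\sigma$ via $\Theta = -(m-1)\bar\partial\partial\log\det g$ is precisely the crux of the computation. Your observation that this cancellation is the structural reason for the coefficient $(m-1)$ in front of $\mathrm{div}\,\varphi$ is a point worth making explicit.

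The paper proceeds quite differently: it carries out a direct brute-force computation in local coordinates, writing $\sigma = f\,dZ\otimes e$, expanding both sides of (\ref{identity}) completely into sums over $1\le l<j\le n$ of explicit coefficient functions times $d\bar z^l\wedge d\bar z^j\wedge dZ\otimes e$, and then matching term by term using the integrability relation $\varphi^k_{\bar l}\partial_k\varphi^i_{\bar j} - \varphi^k_{\bar j}\partial_k\varphi^i_{\bar l} = \bar\partial_l\varphi^i_{\bar j} - \bar\partial_j\varphi^i_{\bar l}$ and its $\partial_i$-derivative. In the paper's expansion the $\log\det g$ contributions from $\nabla'e$ and from $\mathrm{div}\,\varphi$ cancel \emph{algebraically at the level of each side separately} (compare the computations following (\ref{identity-LHS}) and (\ref{identity-RHS})), so the curvature never appears explicitly; the final comparison involves only $f$, $\varphi^i_{\bar j}$, and their ordinary partials. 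Your approach instead keeps the curvature visible as an operator and explains \emph{why} those metric terms disappear. The paper's route is more elementary but opaque; yours is more conceptual and makes the role of the specific Hermitian metric $h_\omega = (\det g)^{-(m-1)}$ transparent, at the cost of having to handle the interaction between $i_\varphi$ and the connection term $\theta\wedge$ carefully when passing from the scalar Cartan formula to its $\nabla'$-analogue.
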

\begin{proof}
Locally, we write $\varphi=\varphi_{\bar{j}}^{i}d\bar{z}^j\otimes
\p_i\in A^{0,1}(M,T^{1,0}M)$, $\sigma=f dz^1\wedge \cdots\wedge
dz^n\otimes e \in A^{n,0}(M,\mathcal{L}_M)$ where $e=(dz^1\wedge
\cdots\wedge dz^n)^{\otimes (m-1)}$. Then
\begin{align*}
div\varphi=(\p_i\varphi_{\bar{j}}^i+\varphi^{i}_{\bar{j}}\p_i \log
\det(g) )d\bar{z}^j.
\end{align*}
For brevity, we introduce the notations $dZ=dz^1\wedge \cdots\wedge
dz^n$ and $dZ^{[k]}=dz^1\wedge \cdots\wedge
\widehat{dz^k}\wedge\cdots \wedge dz^n$, where the hat indicates
that the corresponding term is to be dropped.

By the computations in the proof of Proposition
\ref{lemmaglobalequ}, we have
\begin{align*}
\nabla'(\varphi\lrcorner \sigma)-(m-1)div\varphi \wedge
\sigma=-\left((\p_i
f)\varphi_{\bar{j}}^i+mf\p_i\varphi_{\bar{j}}^i\right)d\bar{z}^j\wedge
dZ\otimes e.
\end{align*}
Therefore
\begin{align} \label{identity-LHS}
&\bp(\nabla'(\varphi\lrcorner \sigma)-(m-1)div\varphi \wedge
\sigma)\\\nonumber &=-\bp_l\left((\p_i
f)\varphi_{\bar{j}}^i+mf\p_i\varphi_{\bar{j}}^i\right)d\bar{z}^l\wedge
d\bar{z}^j\wedge dZ\otimes e\\\nonumber &=\sum_{1\leq l<j\leq
n}\left((\bp_j\p_if
\varphi_{\bar{l}}^i-\bp_l\p_if\varphi_{\bar{j}}^i)+\p_if(\bp_j\varphi_{\bar{l}}^i-\bp_l
\varphi_{\bar{j}}^i)\right.\\\nonumber &\left.+m(\bp_j
f\p_i\varphi_{\bar{l}}^i-\bp_l
f\p_i\varphi_{\bar{j}}^i)+mf(\bp_j\p_i\varphi_{\bar{l}}^i-\bp_l\p_i\varphi_{\bar{j}}^i)\right)d\bar{z}^l\wedge
d\bar{z}^j\wedge dZ\otimes e.
\end{align}

On the other hand side, since
\begin{align*}
\Psi&=\bp \sigma+\nabla'(\varphi\lrcorner \sigma)-(m-1)div\varphi
\wedge \sigma\\\nonumber &=\left(\bp_j f-\varphi_{\bar{j}}^i\p_i
f-mf\p_i\varphi_{\bar{j}}^i\right)d\bar{z}^j\wedge dZ\otimes e,
\end{align*}
we have
\begin{align*}
\varphi\lrcorner \Psi&=(\varphi_{\bar{l}}^{k}d\bar{z}^l\otimes
\p_k)\lrcorner\left(\bp_j f-\varphi_{\bar{j}}^i\p_i
f-mf\p_i\varphi_{\bar{j}}^i\right)d\bar{z}^j\wedge dZ\otimes
e\\\nonumber
&=\sum_{k=1}^n(-1)^k\varphi_{\bar{l}}^k(\bp_jf-\varphi_{\bar{j}}^i\p_if-mf\p_i\varphi_{\bar{j}}^i)d\bar{z}^l\wedge
d\bar{z}^j\wedge dZ^{[k]}\otimes e.
\end{align*}
Thus
\begin{align*}
&\nabla'(\varphi\lrcorner \Psi)\\\nonumber
&=\p\left(\sum_{k=1}^n(-1)^k\varphi_{\bar{l}}^k(\bp_jf-\varphi_{\bar{j}}^i\p_if-mf\p_i\varphi_{\bar{j}}^i)\right)d\bar{z}^l\wedge
d\bar{z}^j\wedge dZ^{[k]}\otimes e\\\nonumber
&+(-1)^{n+1}\sum_{k=1}^n(-1)^k\varphi_{\bar{l}}^k(\bp_jf-\varphi_{\bar{j}}^i\p_if-mf\p_i\varphi_{\bar{j}}^i)d\bar{z}^l\wedge
d\bar{z}^j\wedge dZ^{[k]}\wedge \nabla' e\\\nonumber
&=-\p_k\left(\varphi_{\bar{l}}^k(\bp_jf-\varphi_{\bar{j}}^i\p_if-mf\p_i\varphi_{\bar{j}}^i)\right)d\bar{z}^l\wedge
d\bar{z}^j\wedge dZ\otimes e\\\nonumber
&+\varphi_{\bar{l}}^k\left(\bp_jf-\varphi_{\bar{j}}^i\p_if-mf\p_i\varphi_{\bar{j}}^i\right)(m-1)\p_k\log
(\det g)d\bar{z}^l\wedge d\bar{z}^j\wedge dZ\otimes e.
\end{align*}
We also have
\begin{align*}
-(m-1)div \varphi\wedge \Psi
&=-(m-1)\left(\p_i\varphi_{\bar{l}}^i+\varphi^{i}_{\bar{l}}\p_i \log
\det(g) \right)\\\nonumber
&\cdot\left(\bp_jf-\varphi_{\bar{j}}^i\p_if-mf\p_i\varphi_{\bar{j}}^i\right)d\bar{z}^l\wedge
d\bar{z}^j\wedge dZ\otimes e.
\end{align*}
Therefore
\begin{align} \label{identity-RHS}
&-\left(\nabla'(\varphi\lrcorner \Psi)-(m-1)div\varphi\wedge
\Psi\right)\\\nonumber &=m(\p_k
\varphi_{\bar{l}}^k)\left(\bp_jf-\varphi_{\bar{j}}^i\p_if-mf\p_i\varphi_{\bar{j}}^i\right)
+\varphi_{\bar{l}}^k\p_k\left(\bp_jf-\varphi_{\bar{j}}^i\p_if-mf\p_i\varphi_{\bar{j}}^i\right)\\\nonumber
&\cdot d\bar{z}^l\wedge d\bar{z}^j\wedge dZ\otimes e\\\nonumber
&=\sum_{0\leq l\leq j\leq
n}\left(m\left(\p_k\varphi_{\bar{l}}^k\bp_jf-\p_k\varphi_{\bar{j}}^k\bp_lf\right)+
\left(\varphi_{\bar{l}}^k\p_k\bp_jf-\varphi_{\bar{j}}^k\p_k\bp_lf\right)\right.\\\nonumber
&\left.
+\left(\varphi_{\bar{j}}^k\p_k\varphi_{\bar{l}}^i\p_if-\varphi_{\bar{l}}^k\p_k\varphi_{\bar{j}}^i\p_if\right)
+mf\left(\varphi_{\bar{j}}^k\p_k\p_i\varphi_{\bar{l}}^i-\varphi_{\bar{l}}^k\p_k\p_i\varphi_{\bar{j}}^i\right)\right)\\\nonumber
&\cdot d\bar{z}^l\wedge d\bar{z}^j\wedge dZ\otimes e.
\end{align}

Since $\varphi$ is integrable, i.e.
$\bp\varphi=\frac{1}{2}[\varphi,\varphi]$, we obtain
\begin{align*}
\varphi_{\bar{l}}^k\p_k\varphi_{\bar{j}}^i-\varphi_{\bar{j}}^k\p_k\varphi_{\bar{l}}^i=\bp_l\varphi_{\bar{j}}^i-\bp_j\varphi_{\bar{l}}^i,
\end{align*}
and
\begin{align*}
\p_i\left(\bp_l\varphi_{\bar{j}}^i-\bp_j\varphi_{\bar{l}}^i\right)
&=\p_i\left(\varphi_{\bar{l}}^k\p_k\varphi_{\bar{j}}^i-\varphi_{\bar{j}}^k\p_k\varphi_{\bar{l}}^i\right)\\\nonumber
&=\varphi_{\bar{l}}^k\p_i\p_k\varphi_{\bar{j}}^i-\varphi_{\bar{j}}^k\p_i\p_k\varphi_{\bar{l}}^i.
\end{align*}

Comparing the two expressions in formulae (\ref{identity-LHS}) and
(\ref{identity-RHS}), we finally obtain the identity
(\ref{identity}).
\end{proof}

\begin{proposition} \label{Theorem-equiv}
Suppose that $(\mathcal{L}_M,h_\omega)$ is a positive line bundle over a
compact K\"ahler manifold $(M,\omega)$ with
$\sqrt{-1}\Theta=\rho\omega$ for a constant $\rho>0$, let
$\varphi\in A^{0,1}(M,T^{1,0}M)$ be an integrable Beltrami
differential which satisfies the conditions that $ div\varphi=0 $ and
$L_\infty$-norm $\|\varphi\|_\infty<1$. Then for any holomorphic
$\sigma_0\in A^{n,0}(M,\mathcal{L}_M)$, a solution of the following
equation
\begin{align} \label{equivalentequ}
\sigma-\sigma_0=-\bar{\partial}^*G\nabla'(\varphi\lrcorner \sigma),
\end{align}
is also a solution of the equation
\begin{align}
\label{extensionequsepecial} \bp \sigma=-\nabla'(\varphi\lrcorner
\sigma)
\end{align}
\end{proposition}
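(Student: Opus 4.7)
The plan is to parallel the argument of Proposition \ref{prop1-section} in the bundle-valued setting, replacing the scalar quasi-isometry of Theorem \ref{theorem-quasi-isometry} by Proposition \ref{Theorem-bundlevalue-quasi-isometry} and the Cartan-type identity (\ref{genralizedcartan_special}) by Lemma \ref{keylemma}. The role of the hypothesis $\mathrm{div}\,\varphi=0$ is to collapse the right-hand side of (\ref{identity}) into a pure contraction in $\Psi$, which is what allows a fixed-point/contraction step to close the argument.

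First I would apply $\bar\partial$ to both sides of (\ref{equivalentequ}). Since $\sigma_0$ is holomorphic, this gives
\[
\bar\partial\sigma \;=\; -\bar\partial\bar\partial^* G\,\nabla'(\varphi\lrcorner\sigma).
\]
Using $\bar\partial\bar\partial^* = \overline{\square}-\bar\partial^*\bar\partial$, the Hodge identity $\overline{\square}G=I-H$, and $\bar\partial G = G\bar\partial$, this rearranges to
\[
\bar\partial\sigma + \nabla'(\varphi\lrcorner\sigma) \;=\; H\,\nabla'(\varphi\lrcorner\sigma) \;+\; \bar\partial^* G\,\bar\partial\nabla'(\varphi\lrcorner\sigma).
\]
The Bochner--Kodaira--Nakano identity together with $\sqrt{-1}\Theta=\rho\omega$ gives $\overline{\square} = \square' + \rho q$ on $A^{n-1,q}(M,\mathcal{L}_M)$, exactly as in the proof of Proposition \ref{Theorem-bundlevalue-quasi-isometry}, so $\ker\overline{\square}\subseteq\ker\square'$. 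A short computation then shows that $\nabla'\eta = \nabla'^*\eta = 0$ for every $\eta\in\ker\square'$, hence $\nabla'(\varphi\lrcorner\sigma)\perp\ker\overline{\square}$, i.e.\ $H\,\nabla'(\varphi\lrcorner\sigma)=0$.

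Next I would introduce
\[
\Psi \;:=\; \bar\partial\sigma + \nabla'(\varphi\lrcorner\sigma) - (m-1)\,\mathrm{div}\,\varphi\wedge\sigma,
\]
which by $\mathrm{div}\,\varphi=0$ coincides with $\bar\partial\sigma + \nabla'(\varphi\lrcorner\sigma)$ and hence with the $\Psi$ appearing in Lemma \ref{keylemma}. From the previous display, $\Psi = \bar\partial^* G\,\bar\partial\nabla'(\varphi\lrcorner\sigma)$. Applying Lemma \ref{keylemma} under $\mathrm{div}\,\varphi=0$ gives $\bar\partial\nabla'(\varphi\lrcorner\sigma) = -\nabla'(\varphi\lrcorner\Psi)$, so that
\[
\Psi \;=\; -\bar\partial^* G\,\nabla'(\varphi\lrcorner\Psi) \;=\; -T^{\nabla'}(\varphi\lrcorner\Psi).
\]

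Finally, the quasi-isometry (\ref{quasi-isometry-vectorvalue}) applied to $f=\varphi\lrcorner\Psi\in A^{n-1,2}(M,\mathcal{L}_M)$, combined with the pointwise bound $\|\varphi\lrcorner\Psi\|\le\|\varphi\|_\infty\|\Psi\|$, yields $\|\Psi\|\le\|\varphi\|_\infty\|\Psi\|$; since $\|\varphi\|_\infty<1$ this forces $\Psi=0$, i.e.\ $\bar\partial\sigma = -\nabla'(\varphi\lrcorner\sigma)$. The main obstacle is the two algebraic inputs: producing the vanishing $H\,\nabla'(\varphi\lrcorner\sigma)=0$ from the Bochner--Kodaira--Nakano identity, and checking that the $\mathrm{div}\,\varphi=0$ hypothesis does collapse Lemma \ref{keylemma} into a pure contraction in $\Psi$ with no residual divergence terms; once these are in place, the closing step is essentially identical to the canonical-form case.
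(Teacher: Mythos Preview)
Your argument is correct and follows essentially the same route as the paper's own proof: apply $\bar\partial$ to (\ref{equivalentequ}), use the Hodge identities together with the vanishing $H\nabla'(\varphi\lrcorner\sigma)=0$ (which the paper obtains, as you do, from $\ker\overline{\square}\subseteq\ker\square'$ via Bochner--Kodaira--Nakano), then invoke Lemma \ref{keylemma} under $\mathrm{div}\,\varphi=0$ to get $\Psi=-\bar\partial^*G\nabla'(\varphi\lrcorner\Psi)$ and conclude $\Psi=0$ by the quasi-isometry (\ref{quasi-isometry-vectorvalue}) and $\|\varphi\|_\infty<1$. The only difference is cosmetic: you spell out the orthogonality argument for $H\nabla'(\cdot)=0$, while the paper quotes it from the proof of Proposition \ref{Theorem-bundlevalue-quasi-isometry}.
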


\begin{proof}
Suppose that $\sigma\in A^{n,0}(M,\mathcal{L}_M)$ satisfies the equation
(\ref{equivalentequ}). First, by using the positivity condition for
$\mathcal{L}_M$, we have
\begin{align*}
\bp\sigma&=-\bp\bp^*G\nabla'(\varphi\lrcorner \sigma)\\\nonumber
&=(\bar{\partial}^*\bar{\partial}-\overline{\square})G\nabla'(\varphi\lrcorner
\sigma)\\\nonumber
&=(\bar{\partial}^*\bar{\partial}G-I+H)\nabla'(\varphi\lrcorner
\sigma)\\\nonumber &=-\nabla'(\varphi\lrcorner
\sigma)+\bar{\partial}^*\bar{\partial}G\nabla'(\varphi\lrcorner
\sigma).
\end{align*}
Let $ \Phi=\bar{\partial}\sigma+\nabla'(\varphi\lrcorner \sigma), $
then under the condition $div\varphi=0$, we obtain
\begin{align*}
\bp \nabla'(\varphi\lrcorner \sigma)=-\nabla'(\varphi\lrcorner \Phi)
\end{align*}
by Lemma \ref{keylemma}.

Therefore
\begin{align*}
\Phi&=\bar{\partial}\sigma+\nabla'(\varphi\lrcorner
\sigma)\\\nonumber
&=\bar{\partial}^*\bar{\partial}G\nabla'(\varphi\lrcorner
\sigma)\\\nonumber
&=\bar{\partial}^*G\bar{\partial}\nabla'(\varphi\lrcorner
\sigma)\\\nonumber &=-\bar{\partial}^*G\nabla'\left(\varphi\lrcorner
\Phi\right).
\end{align*}

By quasi-isometry formula (\ref{quasi-isometry-vectorvalue})   and
the condition $||\varphi||_{\infty}<1$, we have
\begin{align*}
||\Phi||^2\leq ||\phi\lrcorner \Phi||^2\leq
||\phi||_\infty||\Phi||^2< ||\Phi||^2,
\end{align*}
 we get the contradiction $\|\Phi\|^2<\|\Phi\|^2$ unless
$\Phi=0$. Hence,
\begin{align*}
\bar{\partial}\sigma=-\nabla'(\varphi\lrcorner \sigma).
\end{align*}
\end{proof}
\begin{remark}
When $(\mathcal{L}_M,h_\omega)$ is semi-positive, we can obtain the
same conclusion as in Proposition \ref{Theorem-equiv}, if we
substitute the global condition $\|\varphi\|_\infty<1$ by requiring
that $\varphi$ (under the H\"older norm as in \cite{MK71}) is
small enough. We leave the further discussion of the extension
equation (\ref{extensionequ})  to another paper.
\end{remark}

By Corollary \ref{Coro-inverse},  it is easy to see that the
equation
\begin{align*}
\sigma-\sigma_0=-\bar{\partial}^*G\nabla'(\varphi\lrcorner
\sigma)=T^{\nabla'}\varphi\lrcorner\sigma
\end{align*}
has a unique solution given by
\begin{align*}
\sigma=(I+T^{\nabla'}\varphi)^{-1}\sigma_0.
\end{align*}

In conclusion,  we obtain
\begin{theorem} \label{theoremglobal}
Suppose that $(\mathcal{L}_M,h_\omega)$ is a positive line bundle over a
compact K\"ahler manifold $(M,\omega)$  with curvature
$\sqrt{-1}\Theta=\rho\omega$ for a constant $\rho>0$, let
$\varphi\in A^{0,1}(M,T^{1,0}M)$ be an integrable Beltrami
differential satisfying the two conditions $div\varphi=0$ and
$L_\infty$-norm $\|\varphi\|_\infty<1$. Then, given any holomorphic
pluricanonical form $\sigma_0\in A^{n,0}(M,\mathcal{L}_M)$, we have that
\begin{align*}
\sigma(\varphi)=\rho_\varphi((I+T^{\nabla'}\varphi)^{-1}\sigma_0)
\end{align*}
is a holomorphic pluricanonical form in
$A^{n,0}(M_\varphi,\mathcal{L}_{M_\varphi})$.
\end{theorem}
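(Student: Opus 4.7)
The plan is to assemble the theorem directly from the machinery already developed in Sections \ref{subsection-extensionequations}--\ref{subsection-solvingequation}. First I would use Corollary \ref{Coro-inverse}: under the hypotheses that $(\mathcal{L}_M,h_\omega)$ is a positive line bundle with $\sqrt{-1}\Theta=\rho\omega$ and that $\|\varphi\|_\infty<1$, the operator $I+T^{\nabla'}\varphi$ is a bounded invertible operator on the relevant Hilbert space of $L^2$ sections. Hence the expression
\begin{equation*}
\sigma := (I+T^{\nabla'}\varphi)^{-1}\sigma_0
\end{equation*}
is well defined, and by construction satisfies the integral equation
\begin{equation*}
\sigma - \sigma_0 = -T^{\nabla'}\varphi\,\sigma = -\bar\partial^{*}G\nabla'(\varphi\lrcorner\sigma).
\end{equation*}
As in the canonical-form case, an elliptic regularity argument (applied to the equation $\bar{\square}G(\varphi\lrcorner\sigma) = \varphi\lrcorner\sigma - H(\varphi\lrcorner\sigma)$, using that $\sigma_0$ is smooth) shows that $\sigma$ is in fact smooth, so it lies in $A^{n,0}(M,\mathcal{L}_M)$.

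Next I would invoke Proposition \ref{Theorem-equiv}. Its hypotheses are exactly those of the present theorem (positivity of $\mathcal{L}_M$, integrability of $\varphi$, $\mathrm{div}\,\varphi=0$, and $\|\varphi\|_\infty<1$), and its conclusion is that any solution of the integral equation (\ref{equivalentequ}) also solves
\begin{equation*}
\bar\partial\sigma = -\nabla'(\varphi\lrcorner\sigma).
\end{equation*}
Since we additionally have $\mathrm{div}\,\varphi=0$, this is equivalent to the full extension equation (\ref{extensionequ}),
\begin{equation*}
\bar\partial\sigma = -\nabla'(\varphi\lrcorner\sigma) + (m-1)\,\mathrm{div}\,\varphi\wedge\sigma.
\end{equation*}

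Finally I would apply Proposition \ref{lemmaglobalequ}, which asserts that a section $\sigma\in A^{n,0}(M,\mathcal{L}_M)$ satisfies (\ref{extensionequ}) if and only if its transform $\rho_\varphi(\sigma)$ is holomorphic as a section of $\mathcal{L}_{M_\varphi}$ over $M_\varphi$. This converts the analytic statement we have established into the desired geometric conclusion that
\begin{equation*}
\sigma(\varphi) = \rho_\varphi\bigl((I+T^{\nabla'}\varphi)^{-1}\sigma_0\bigr) \in A^{n,0}(M_\varphi,\mathcal{L}_{M_\varphi})
\end{equation*}
is holomorphic on $M_\varphi$.

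The one genuinely non-formal input is Proposition \ref{Theorem-equiv}, whose proof rests on the $\bar\partial$-closedness identity of Lemma \ref{keylemma} together with the bundle-valued quasi-isometry estimate \eqref{quasi-isometry-vectorvalue}; the obstruction form $\Phi = \bar\partial\sigma + \nabla'(\varphi\lrcorner\sigma)$ is shown to satisfy $\Phi = -\bar\partial^{*}G\nabla'(\varphi\lrcorner\Phi)$, and the norm bound $\|T^{\nabla'}\|\le 1$ combined with $\|\varphi\|_\infty<1$ forces $\Phi=0$. Since this is already established in the excerpt, the proof of the theorem itself is essentially a concatenation of the invertibility, the equivalence of equations, and the characterization of holomorphy of $\rho_\varphi(\sigma)$; the main subtle point one needs to keep track of is that the assumption $\mathrm{div}\,\varphi=0$ is used in two logically distinct places: once in Proposition \ref{Theorem-equiv} to eliminate the div-term inside the identity of Lemma \ref{keylemma}, and once at the end to match (\ref{extensionequsepecial}) with the full extension equation (\ref{extensionequ}).
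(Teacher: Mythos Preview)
Your proposal is correct and follows essentially the same route as the paper: the theorem is stated as a direct conclusion of Corollary \ref{Coro-inverse} (invertibility of $I+T^{\nabla'}\varphi$) together with Proposition \ref{Theorem-equiv} (solutions of the integral equation solve the extension equation), with Proposition \ref{lemmaglobalequ} supplying the holomorphy criterion. Your account is in fact slightly more explicit than the paper's, which simply writes ``In conclusion, we obtain'' after exhibiting the solution $\sigma=(I+T^{\nabla'}\varphi)^{-1}\sigma_0$; your remarks on smoothness and on the two distinct uses of $\mathrm{div}\,\varphi=0$ are accurate elaborations that the paper leaves implicit.
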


Theorem \ref{theoremglobal} gives a closed formula for the extension
of pluricanonical forms. Note that the above construction is global
in the sense that it does not depend on the local deformation theory
of Kodaira-Spencer and Kuranishi \cite{MK71}. The application  of
Theorem \ref{theoremglobal} will be discussed in the following
section.
\subsection{Extension of puricanonical form over
K\"ahler-Einstein manifold of general type}
\label{subsection-KEextensionformula} The invariance of
plurigenera for K\"ahler-Einstein manifold of general type has
already been known, see for example \cite{Sun}. Here we derive an explicit and closed formula as a direct
application of  Theorem \ref{theoremglobal}.

Let $(M,\omega)$ be a K\"ahler manifold. Denote the associated
K\"ahler form by $\omega=\frac{\sqrt{-1}}{2}g_{i\bar{j}}dz^{i}\wedge
d\bar{z}^j$. The corresponding Chern-Ricci form $Ric(\omega)$ is
given by
\begin{align*}
Ric(\omega)=\frac{\sqrt{-1}}{2}\bp\p\log(\det g).
\end{align*}

\begin{definition} \label{def-KEgeneraltype}
We say $(M,\omega)$ is a K\"ahler-Einstein manifold of general type
if $Ric(\omega)=-\omega$.
\end{definition}
In the following discussion, we assume that $(M,\omega)$ is a
K\"ahler-Einstein manifold of general type.
\begin{proposition} [cf. Theorem 1.1 in \cite{Sun} ] \label{gaugeequiv}
Let $\varphi\in A^{0,1}(M,T^{1,0}M)$ be an integrable Betrami
differential, then $\bp^* \varphi=0$ if and only if $div \varphi=0$.
\end{proposition}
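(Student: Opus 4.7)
The proof is a direct local-coordinate computation combining the Chern connection, the K\"ahler identities, and the Einstein equation. $L^{2}$-integration by parts (tracking the factor $\det g$ in $dV$) gives
\begin{align*}
(\bar\partial^{*}\varphi)^{k} = -g^{\bar l j}\nabla_{j}\varphi^{k}_{\bar l} = -g^{\bar l j}(\partial_{j}\varphi^{k}_{\bar l}+\Gamma^{k}_{ji}\varphi^{i}_{\bar l}),
\end{align*}
while Section~6.1 yields $(\mathrm{div}\,\varphi)_{\bar j} = \nabla_{i}\varphi^{i}_{\bar j}$. Since these quantities live in different bundles, my first step is to lower the vector index of $\bar\partial^{*}\varphi$ via the K\"ahler metric, producing the $(0,1)$-form $g_{k\bar l}(\bar\partial^{*}\varphi)^{k}$ that can be compared directly with $\mathrm{div}\,\varphi$.

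Using $\nabla g = 0$ and the auxiliary $(0,2)$-tensor $\tilde\varphi_{\bar a\bar b}:=g_{i\bar a}\varphi^{i}_{\bar b}$, both expressions become traces of $\nabla\tilde\varphi$, and a direct manipulation produces the pointwise identity (valid on any K\"ahler manifold)
\begin{align*}
(\mathrm{div}\,\varphi)_{\bar l}+g_{k\bar l}(\bar\partial^{*}\varphi)^{k} = g^{\bar k i}\nabla_{i}A_{\bar k\bar l} = (\bar\partial^{*}A)_{\bar l},
\end{align*}
where $A_{\bar a\bar b}:=\tilde\varphi_{\bar a\bar b}-\tilde\varphi_{\bar b\bar a}$ is the antisymmetric part. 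Up to a factor of $\sqrt{-1}$, the scalar $(0,2)$-form $A$ coincides with $\iota_{\varphi}\omega$, so the proposition reduces to showing $\bar\partial^{*}(\iota_{\varphi}\omega)=0$ on the K\"ahler-Einstein manifold for the integrable Beltrami differential $\varphi$.

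For this last step I would apply the K\"ahler identity $\bar\partial^{*}=-\sqrt{-1}[\Lambda,\partial]$ on scalar forms. Since $\iota_{\varphi}\omega\in A^{0,2}(M)$ and $\Lambda$ lowers bidegree by $(1,1)$, we have $\Lambda(\iota_{\varphi}\omega)=0$, so $\bar\partial^{*}(\iota_{\varphi}\omega) = -\sqrt{-1}\,\Lambda\partial(\iota_{\varphi}\omega)$. I would then expand $\partial(\iota_{\varphi}\omega)$ via the generalized Cartan formulas of Section~2.1, substitute $\bar\partial\varphi=\tfrac{1}{2}[\varphi,\varphi]$ from integrability, and invoke the Einstein equation $\partial_{i}\partial_{\bar j}\log\det g = g_{i\bar j}$ to produce the required cancellations.

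The main obstacle is this final cancellation: one must carefully match signs and index contractions so that the Nijenhuis contribution $[\varphi,\varphi]\lrcorner\omega$ is annihilated precisely by the Ricci curvature of the Einstein metric. Without integrability a nonzero Nijenhuis residue persists, and without the specific Einstein constant $Ric(\omega)=-\omega$ the curvature correction would fail to vanish; so both hypotheses enter in an essential way.
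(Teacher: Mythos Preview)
First, note that the paper does not supply its own proof of this proposition; it is quoted from \cite{Sun} without argument, so there is no in-paper proof to compare against directly.

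Your reduction up through the pointwise identity
\[
(\mathrm{div}\,\varphi)_{\bar l}+g_{k\bar l}(\bar\partial^{*}\varphi)^{k}=g^{\bar k i}\nabla_{i}A_{\bar k\bar l}
\]
is correct, and identifying the right-hand side with $\bar\partial^{*}(\iota_{\varphi}\omega)$ is a clean restatement of the problem. The gap lies in your proposed final step. The quantity $\bar\partial^{*}(\iota_{\varphi}\omega)=-\sqrt{-1}\,\Lambda\partial(\iota_{\varphi}\omega)$ is \emph{linear} in $\varphi$ and involves only the holomorphic first derivatives $\partial_{m}\varphi^{k}_{\bar l}$ together with the first metric derivatives $\partial_{m}g_{i\bar j}$. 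Neither of the two hypotheses you plan to invoke can act on such an expression: the integrability condition $\bar\partial\varphi=\tfrac12[\varphi,\varphi]$ is quadratic in $\varphi$ and constrains $\bar\partial$-derivatives of $\varphi$, not $\partial$-derivatives; and the Einstein equation $\partial_{i}\bar\partial_{j}\log\det g=g_{i\bar j}$ is a second-order condition on the metric. Concretely, if you carry out the normal-coordinate computation of $\Lambda\partial(\iota_{\varphi}\omega)$ you will find it reproduces, up to an overall constant, exactly the combination $(\mathrm{div}\,\varphi)+g_{\cdot\bar\cdot}(\bar\partial^{*}\varphi)^{\cdot}$ on the left of your identity. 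So the ``final cancellation'' you describe is circular rather than a genuine reduction; the Nijenhuis term $[\varphi,\varphi]\lrcorner\omega$ never appears in $\Lambda\partial(\iota_{\varphi}\omega)$, and no Ricci curvature arises at this order.

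A correct argument must bring in the K\"ahler--Einstein hypothesis through a different mechanism---for instance, a Weitzenb\"ock-type identity on $A^{0,1}(M,T^{1,0}M)$, or the observation that when $\mathrm{Ric}(\omega)=-\omega$ the metric contraction $T^{1,0}M\to\Omega^{0,1}_{M}$ intertwines the relevant Laplacians, so that the Kuranishi gauge and the divergence-free gauge coincide. Your outline does not yet contain such an ingredient.
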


Now we consider the deformation of complex structures of
K\"ahler-Einstein manifolds of general type. Let
\begin{align*}
\pi: \mathcal{M}\rightarrow B
\end{align*}
be a holomorphic family of K\"ahler-Einstein manifolds of general
type. Here $B=B_\epsilon\subset \mathbb{C}$ is the open disk of
radius $\epsilon$. Let $t$ be the holomorphic coordinate on $B$. For
$t\in B$, we let $M_t=\pi^{-1}(t)$ be the fiber with the complex
structure induced by the integrable Beltrami differential
$\varphi(t)\in A^{0,1}(M,T^{1,0}M)$, which satisfies
\begin{equation*}
\left\{ \begin{aligned}
         &\bar{\partial} \varphi(t) =\frac{1}{2}[\varphi(t),\varphi(t)]\\
                  &\bar{\partial}^*\varphi(t) =0.
                          \end{aligned} \right.
                          \end{equation*}
where $\p, \bp^*$ are the operators on $M_0$ and $\bp^*$ is defined
with respect to the K\"ahler-Einstein metric $g_0$. We can choose
$\epsilon$ small enough, such that $\|\varphi(t)\|_\infty<1$.

Let $\mathcal{L}_{M_0}=K_{M_0}^{\otimes (m-1)}$ be the holomorphic
line bundle over $M_0$, and the corresponding Hermitian metric be given
by $h_0=(\det g_0)^{-{(m-1)}}$. Let $\nabla=\nabla'+\bp$ be the
Chern connection of $(\mathcal{L}_{M_0},h_0)$. We have that
$\sqrt{-1}\Theta=-2(m-1)Ric(\omega_0)$ by formula
(\ref{curvature-linebundle}).  Recall the operator
$T^{\nabla'}=\bp^*G\nabla'$ we introduced.
\begin{corollary} \label{theoremdeformation}
Given any holomorphic pluricanonical form $\sigma_0\in
A^{n,0}(M_0,\mathcal{L}_{M_0})$, we have that
\begin{align*}
\sigma(t)=\rho_t((I+T^{\nabla'}\varphi(t))^{-1}\sigma_0)
\end{align*}
is a holomorphic pluricanonical form in $A^{n,0}(M_t,\mathcal{L}_{M_t})$
with $\sigma(0)=\sigma_0$, where $\rho_t=\rho_{\varphi(t)}$.
\end{corollary}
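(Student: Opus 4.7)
The plan is to obtain this corollary as an essentially immediate application of Theorem \ref{theoremglobal}, since the K\"ahler--Einstein hypothesis is precisely what is needed to verify the two structural assumptions of that theorem. First I would unpack the setup on the central fiber $(M_0, \omega_0)$: because $M_0$ is K\"ahler--Einstein of general type, $\mathrm{Ric}(\omega_0) = -\omega_0$, so by the curvature computation (\ref{curvature-linebundle}) the induced Hermitian line bundle $(\mathcal{L}_{M_0}, h_0) = (K_{M_0}^{\otimes(m-1)}, (\det g_0)^{-(m-1)})$ has Chern curvature
\begin{equation*}
\sqrt{-1}\,\Theta \;=\; -2(m-1)\mathrm{Ric}(\omega_0) \;=\; 2(m-1)\omega_0.
\end{equation*}
Since $m \geq 2$, the constant $\rho := 2(m-1) > 0$, so $(\mathcal{L}_{M_0}, h_0)$ is a positive line bundle of the exact type required by Theorem \ref{theoremglobal}.

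Next I would verify the two conditions on the Beltrami differential $\varphi(t)$. By hypothesis, $\varphi(t)$ is integrable and satisfies the Kuranishi gauge $\bar{\partial}^*\varphi(t) = 0$ on $(M_0, \omega_0)$. Proposition \ref{gaugeequiv} (which uses the K\"ahler--Einstein condition $\mathrm{Ric}(\omega_0) = -\omega_0$) then yields the divergence-free condition $\mathrm{div}\,\varphi(t) = 0$ that Theorem \ref{theoremglobal} requires. The $L_\infty$ smallness condition $\|\varphi(t)\|_\infty < 1$ holds automatically after shrinking $\epsilon$, which is permitted since $\varphi(0) = 0$ and $\varphi(t)$ depends continuously on $t$.

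With both hypotheses confirmed, Theorem \ref{theoremglobal} applies on the pair $(M_0, \omega_0)$ with Beltrami differential $\varphi(t)$, giving that
\begin{equation*}
\sigma(t) \;=\; \rho_{\varphi(t)}\bigl((I + T^{\nabla'}\varphi(t))^{-1}\sigma_0\bigr)
\end{equation*}
is a holomorphic pluricanonical form in $A^{n,0}(M_{\varphi(t)}, \mathcal{L}_{M_{\varphi(t)}}) = A^{n,0}(M_t, \mathcal{L}_{M_t})$. Finally, at $t = 0$ we have $\varphi(0) = 0$, so $T^{\nabla'}\varphi(0) = 0$ and $\rho_0 = \rho_{\varphi(0)} = e^{i_0} = \mathrm{id}$, yielding $\sigma(0) = \sigma_0$ as claimed.

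There is essentially no obstacle here, since all the analytic work is absorbed into Theorem \ref{theoremglobal} (which in turn rests on the bundle-valued quasi-isometry of Proposition \ref{Theorem-bundlevalue-quasi-isometry} and the key identity of Lemma \ref{keylemma}) and into Proposition \ref{gaugeequiv}. The only point that merits explicit mention is the translation $\bar{\partial}^*\varphi(t) = 0 \Rightarrow \mathrm{div}\,\varphi(t) = 0$, which depends on the K\"ahler--Einstein condition and would fail for a general K\"ahler family; this is the reason the result is stated specifically for K\"ahler--Einstein manifolds of general type rather than in the broader setting of Theorem \ref{theoremglobal}.
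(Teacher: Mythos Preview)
Your proof is correct and follows exactly the same approach as the paper: verify the positivity hypothesis of Theorem \ref{theoremglobal} via the K\"ahler--Einstein condition and formula (\ref{curvature-linebundle}), convert the Kuranishi gauge $\bar\partial^*\varphi(t)=0$ into $\mathrm{div}\,\varphi(t)=0$ using Proposition \ref{gaugeequiv}, and then apply Theorem \ref{theoremglobal}. The paper's proof is simply terser, omitting the explicit check of $\sigma(0)=\sigma_0$ and the remark on shrinking $\epsilon$, but the logical content is identical.
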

\begin{proof}
Since $(M_0,\omega_0)$ is K\"ahler-Einstein of general type, i.e.
$Ric(\omega_0)=-\omega_0$. Hence $\sqrt{-1}\Theta=2(m-1)\omega_0.$
Thus $(\mathcal{L}_{M_0},h_0)$ is a line bundle which satisfies the
conditions in Theorem \ref{theoremglobal}. Then Corollary
\ref{theoremdeformation} followed by Theorem \ref{theoremglobal} and
Proposition \ref{gaugeequiv}.
\end{proof}

By using Corollary \ref{theoremdeformation}, one can  write down the
curvature formula of the induced $L^2$ metric on the generalized
Hodge bundle over the base $B$ with fiber $H^0(M_t, K_{M_t}^{\otimes
m})$ as shown in \cite{Sun}.

\section{$L^2$-Hodge theory and quasi-isometry} \label{Section-L2}
We will first review abstract Hodge theory, then we derive from
\cite{Gromov} that $L^2$-Hodge decomposition theory holds on the
universal cover of a K\"ahler hyperbolic manifold. Then we show the
quasi-isometry formula in $L^2$-Hodge theory, and discuss the
relationship between $L^2$-Hodge theory and the $L^2$-estimate of
H\"ormander.

\subsection{Abstract Hodge theory} For the basics of the $L^2$-Hodge theory,
we refer the reader to Chapter 2 in \cite{Ohsawa} or Appendix C in
\cite{Ballmann}. Let $H_1, H_2$ and $H_3$ be three Hilbert spaces.
Let $T: H_1 \rightarrow H_2$ and $S: H_2 \rightarrow H_3$ be densely
defined closed operators. We assume
\begin{eqnarray*}
\text{Im}(T) \subset \text{Dom}(S)
\end{eqnarray*}
and
\begin{eqnarray*}
STx =0 ~~~~ \text{for} ~~~~ x \in \text{Dom}(T).
\end{eqnarray*}
\noindent Then their Hilbert space adjoint operators $S^*: H_3
\rightarrow H_2$ and $T^*: H_2 \rightarrow H_1$ satisfy the same
conditions, namely,
\begin{eqnarray*}
\text{Im}(S^*) \subset \text{Dom}(T^*),
\end{eqnarray*}
and
\begin{eqnarray*}
T^* S^* x=0 ~~~~ \text{for}~~~~ x \in \text{Dom}(S^*).
\end{eqnarray*}

\noindent If we define $\Delta: H_2 \rightarrow H_2$ by
\begin{eqnarray*}
&& \text{Dom}(\Delta) = \{ x \in \text{Dom}(S) \cap \text{Dom}(T^*); T^* x \in \text{Dom}(T), Sx \in \text{Dom}(S^*)\},\\
&&\Delta s = S^* S x + T T^* x ~~~~\text{for}~~~~x \in
\text{Dom}(\Delta).
\end{eqnarray*}

\noindent Then we have the following existence theorem for the Green
operator of the operator $\Delta$, as given in \cite{KK87}. We
summarize it here for reader's convenience.

\begin{proposition}\label{abstractHodge}
Let $S, T$ and $\Delta$ be as above, and suppose that both
$\text{Im}(S)$ and $\text{Im}(T)$ are closed. Denoting
$\Ker(\Delta)$ by $\mathcal{H}$, we have the folllowing:
\begin{enumerate}
\item $\Delta$ is self-adjoint, that is $\Delta= \Delta^*$ holds.
\item $\mathcal{H} = \Ker(\Delta) = \Ker(T^*) \cap \Ker(S)$ and $\mathcal{H}^{\bot}= \text{Im}(\Delta)$.
\item Denoting by $P_{\mathcal{H}}$ the projection operator: $H_2 \rightarrow \mathcal{H}$,
the Green operator $$G=(\Delta|_{\mathcal{H}^{\bot}})^{-1}(I -
P_{\mathcal{H}})$$ is well-defined and bounded.
\end{enumerate}
\end{proposition}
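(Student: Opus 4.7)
The strategy is to use the hypothesis $ST=0$ together with Banach's closed range theorem to obtain a three-term orthogonal decomposition of $H_2$, from which the three assertions follow without much additional work.

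First I would compute, for $x\in\mathrm{Dom}(\Delta)$,
$$\langle \Delta x,x\rangle \;=\;\langle S^*Sx,x\rangle+\langle TT^*x,x\rangle\;=\;\|Sx\|^2+\|T^*x\|^2,$$
so $\Delta x=0$ is equivalent to $Sx=0$ and $T^*x=0$. This gives the kernel characterization $\mathrm{Ker}(\Delta)=\mathrm{Ker}(S)\cap\mathrm{Ker}(T^*)=\mathcal{H}$ in item (2), and polarizing the same identity shows that $\Delta$ is symmetric on its domain.

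Next I would establish the Hodge-type decomposition
$$H_2\;=\;\mathcal{H}\;\oplus\;\mathrm{Im}(T)\;\oplus\;\mathrm{Im}(S^*).$$
The hypothesis $ST=0$ yields $\mathrm{Im}(T)\subseteq\mathrm{Ker}(S)$, and passing to adjoints gives $\mathrm{Im}(S^*)\subseteq\mathrm{Ker}(T^*)$. Since $\mathrm{Ker}(T^*)=\mathrm{Im}(T)^\perp$ for any densely defined closed operator, this forces $\mathrm{Im}(T)\perp\mathrm{Im}(S^*)$. Banach's closed range theorem, combined with the hypothesis that $\mathrm{Im}(T)$ and $\mathrm{Im}(S)$ are closed, implies that $\mathrm{Im}(T^*)$ and $\mathrm{Im}(S^*)$ are closed as well and that $\mathrm{Im}(T^*)=\mathrm{Ker}(T)^\perp$, $\mathrm{Im}(S^*)=\mathrm{Ker}(S)^\perp$. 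Assembling these identities gives the displayed decomposition, so in particular $\mathcal{H}^\perp=\mathrm{Im}(T)\oplus\mathrm{Im}(S^*)$.

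The heart of the proof is to show $\mathrm{Im}(\Delta)=\mathcal{H}^\perp$, from which (1) and (3) follow quickly. The inclusion $\mathrm{Im}(\Delta)\subseteq\mathcal{H}^\perp$ is immediate. For the reverse, given $y=y_1+y_2$ with $y_1\in\mathrm{Im}(T)$ and $y_2\in\mathrm{Im}(S^*)$, I would write $y_1=Tu$ with $u\in\mathrm{Ker}(T)^\perp=\mathrm{Im}(T^*)$, choose $z_1\in\mathrm{Ker}(T^*)^\perp\cap\mathrm{Dom}(T^*)=\mathrm{Im}(T)\cap\mathrm{Dom}(T^*)$ with $T^*z_1=u$, and note that $z_1\in\mathrm{Im}(T)\subseteq\mathrm{Ker}(S)$ automatically handles the $S$-side domain conditions and yields $\Delta z_1=TT^*z_1=y_1$. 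A symmetric construction produces $z_2$ with $\Delta z_2=y_2$, and $x=z_1+z_2$ solves $\Delta x=y$. Injectivity of $\Delta$ on $\mathrm{Dom}(\Delta)\cap\mathcal{H}^\perp$ follows from (1), giving a bijection $\Delta\colon\mathrm{Dom}(\Delta)\cap\mathcal{H}^\perp\to\mathcal{H}^\perp$. Self-adjointness then follows from symmetry together with the standard criterion that a nonnegative symmetric operator with $\mathrm{Im}(\Delta+I)=H_2$ is self-adjoint (surjectivity of $\Delta+I$ comes from the above construction applied on $\mathcal{H}^\perp$, combined with $(\Delta+I)|_\mathcal{H}=I$). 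Finally, $G=(\Delta|_{\mathcal{H}^\perp})^{-1}(I-P_\mathcal{H})$ is well defined, and its boundedness follows from the closed graph theorem applied to the closed self-adjoint operator $\Delta$.

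The main obstacle I expect is the careful verification of the domain conditions in the solvability step, namely that the lifted element $z_1$ simultaneously lies in $\mathrm{Dom}(T^*)$ with $T^*z_1\in\mathrm{Dom}(T)$, and in $\mathrm{Dom}(S)$ with $Sz_1\in\mathrm{Dom}(S^*)$; keeping $z_1$ inside $\mathrm{Im}(T)\subseteq\mathrm{Ker}(S)$ is what makes the $S$-side conditions free, and this is precisely where the orthogonal decomposition and the closed range theorem must be used in tandem. Once this is in hand, the remaining assertions are formal.
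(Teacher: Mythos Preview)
The paper does not prove this proposition at all; it is quoted from \cite{KK87} and stated without proof. Your outline is the standard argument one finds in such references, and the three main pillars --- the kernel identity via $\langle\Delta x,x\rangle=\|Sx\|^2+\|T^*x\|^2$, the three-term decomposition $H_2=\mathcal{H}\oplus\mathrm{Im}(T)\oplus\mathrm{Im}(S^*)$ from the closed range theorem, and the explicit solvability step producing $z_1,z_2$ --- are all correct and carefully handled, including the domain checks you flag at the end.

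There is, however, a small circularity in the last two sentences. You deduce boundedness of $G$ from the closed graph theorem ``applied to the closed self-adjoint operator $\Delta$'', and you deduce self-adjointness from surjectivity of $\Delta+I$; but neither closedness nor self-adjointness of $\Delta$ has been established at that point, and the surjectivity of $\Delta+I$ on $\mathcal{H}^\perp$ does not follow from ``the above construction'' (that construction solves $\Delta z=y$, not $(\Delta+I)z=y$). The clean fix is to extract the quantitative bound already implicit in your argument: closed range gives constants with $\|Tu\|\ge C_1\|u\|$ on $\mathrm{Ker}(T)^\perp$ and $\|T^*v\|\ge C_2\|v\|$ on $\mathrm{Im}(T)$, so on $\mathrm{Im}(T)\cap\mathrm{Dom}(\Delta)$ one has $\|\Delta x\|=\|TT^*x\|\ge C_1C_2\|x\|$, and symmetrically on $\mathrm{Im}(S^*)$. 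Since you have already checked that the orthogonal pieces of any $x\in\mathrm{Dom}(\Delta)\cap\mathcal{H}^\perp$ again lie in $\mathrm{Dom}(\Delta)$, this yields $\|\Delta x\|\ge c\|x\|$ on $\mathcal{H}^\perp$, hence $G$ is bounded directly. Then $G$ is bounded symmetric, hence self-adjoint, and $\Delta$ is the orthogonal sum of $0$ on $\mathcal{H}$ and the inverse of a bounded injective self-adjoint operator on $\mathcal{H}^\perp$, which is self-adjoint. With this reordering the argument is complete.
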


\subsection{K\"ahler hyperbolicity and $L^2$-Hodge theory}
Let $M$ be a compact K\"ahler manifold with a K\"ahler form
$\omega$. Let $\pi:\, \tilde{M}\to M$ be the universal cover of $M$.
Then $M$ is called K\"ahler hyperbolic as defined in \cite{Gromov},
if
$$\pi^*\omega =d\alpha$$ with $L_\infty$-norm $\|\alpha\|_\infty$ finite under the metric $\pi^*\omega$ on $\tilde{M}$.

We are interested in the $L^2$-cohomology on $\tilde{M}$ with the
metric $\tilde{\omega}=\pi^*\omega$.

\begin{lemma}\label{L21} There exists $L^2$-Hodge theory on $\tilde{M}$ with the complete K\"ahler metric $\tilde{\omega}= \pi^*\omega$.
More precisely complete Hodge decompositions holds on $\tilde{M}$ with
K\"ahler form $\tilde{\omega}$.
\end{lemma}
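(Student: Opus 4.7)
The plan is to reduce the claim to a closed-range property for $\bar\partial$ and $d$ on $L^2$-forms over $\tilde M$, and then to establish that property via the spectral-gap estimate of Gromov which uses the existence of a bounded primitive for $\tilde\omega$.

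First I would set up the Hilbert-space framework. Since $\tilde\omega = \pi^*\omega$ is a complete K\"ahler metric, the maximal closed extensions of $d$, $\bar\partial$, $\bar\partial^*$, $\ldots$ agree with their minimal ones on $L^2$, and the K\"ahler identity $\square_{\bar\partial} = \frac12\Delta_d$ persists on the common core of smooth compactly supported forms. The abstract framework of Proposition \ref{abstractHodge} then applies with $T=S=\bar\partial$ (respectively $T=S=d$), and the existence of a bounded Green operator $G$, together with the orthogonal decomposition
\[L^2\Lambda^{p,q}(\tilde M) = \mathcal{H}^{p,q}\oplus\overline{\mathrm{Im}(\bar\partial)}\oplus\overline{\mathrm{Im}(\bar\partial^*)},\]
reduces to the statement that $\bar\partial$ and $\bar\partial^*$ have closed range. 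Once closed range is known, the usual identities of Proposition \ref{Prop-Hodge}, namely $\bar\partial G = G\bar\partial$, $\bar\partial^*G = G\bar\partial^*$, $HG=GH=0$, are recovered by direct computation exactly as in the compact case.

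To obtain closed range I would follow Gromov and establish a uniform coercive estimate
\[\|u\|^2 \leq C\bigl(\|du\|^2 + \|d^*u\|^2\bigr)\]
for every smooth compactly supported form $u$ of degree $k$ that is orthogonal to the harmonic space. The algebraic input is the Lefschetz commutator: since $\tilde\omega = d\alpha$ with $\|\alpha\|_\infty<\infty$, one has $L_{\tilde\omega} u = d(\alpha\wedge u) - \alpha\wedge du$ on compactly supported forms, so wedging with $\tilde\omega$ is a commutator of $d$ with the bounded operator $\alpha\wedge\cdot$. Combining this with the K\"ahler identities — so that the iterated Lefschetz $L_{\tilde\omega}^{n-k}$, which has a primitive-decomposition inverse on forms of degree $k\neq n$, is expressible through $d$, $d^*$ and the bounded $\alpha\wedge\cdot$ — produces the desired inequality after integration by parts. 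The inequality yields a strict spectral gap of $\Delta_d$ on the orthogonal complement of $\mathcal{H}^k$, hence closed range for $d$, $d^*$, and therefore for $\bar\partial$ and $\bar\partial^*$. Plugging this into Proposition \ref{abstractHodge} produces the bounded Green operator and the $L^2$-Hodge decomposition.

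The main obstacle is the middle degree $k = n$. Away from the middle degree, the Lefschetz map $L_{\tilde\omega}^{n-k}$ is invertible on primitive forms and the primitive trick directly dominates $\|u\|$ by $\|du\|+\|d^*u\|$; at $k=n$ the Lefschetz map is no longer injective on its target and one must argue that the non-harmonic part of $L^2\Lambda^n(\tilde M)$ still satisfies a coercive estimate. This is precisely the point where K\"ahler hyperbolicity (rather than mere completeness of $\tilde\omega$) is used in an essential way: the boundedness of the primitive $\alpha$ forces the harmonic space in middle degree to be the only obstruction to invertibility, and the gap estimate survives on its orthogonal complement. In the application of this paper, $\tilde M$ is the Poincar\'e disk, a setting in which the middle-degree subtlety can be handled directly by explicit spectral computation, so this step is the only genuinely technical part of the argument.
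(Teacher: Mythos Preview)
Your approach is essentially the same as the paper's: both reduce the Hodge decomposition to a closed-range property and obtain it from Gromov's spectral-gap estimate, then invoke the abstract Hodge framework of Proposition~\ref{abstractHodge}. The paper simply cites Theorem~1.4.A of \cite{Gromov} for the coercive inequality rather than sketching its Lefschetz-commutator proof, so your discussion of the middle-degree subtlety goes beyond what the paper includes, but the logical skeleton is identical.
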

\begin{proof} This follows from the estimates in Theorem 1.4.A in \cite{Gromov}. Indeed, let $\triangle$ denote one of the
three Laplacians
$$\Box_{\bp} = \bp\bp^*+\bp^*\bp, \ \Box_{\p} = \p\p^*+\p^*\p,\, \mathrm{and}\  \triangle_d = dd^* +d^*d.$$
 Then the estimates in Theorem 1.4.A in \cite{Gromov} gives us that,
for any $L^2$-form $\psi$ that is orthogonal to the harmonic forms,
we have
$$\langle\psi, \triangle\psi\rangle\geq\lambda_0^2\langle\psi,\psi\rangle$$
where $\lambda_0$ is a strictly positive constant which only depends
on the dimension of $M$ and the bound on $\alpha$.

Since $\tilde{\omega}$ is a complete metric, as discussed in
Appendix C in \cite{Ballmann}, we know that
$$\mathrm{ker}\, \Box_{\bp}= \mathrm{ker}\,\bp\cap \mathrm{ker}\,\bp^*,$$
similarly for the operators $\p$ and $d$.
 By Theorem 2.1 and Theorem 2.2 in Chapter 2.1.2 of  \cite{Ohsawa}, we know that the images of the operators $\bp$, $\p$ and $d$ are closed.
  Therefore from Proposition \ref{abstractHodge}, we know that the abstract Hodge theory holds for these operators.

 On the other hand, since $\tilde{\omega}$ is K\"ahler, we have the equalities for the Laplacians.
  $$\Box_{\bp}= \Box_{\p} =\frac{1}{2} \triangle_d$$
 which implies that complete Hodge decomposition theory holds for the $L^2$ cohomology on $\tilde{X}$ with the metric
 $\tilde{\omega}=\pi^*\omega$.
 \end{proof}
 Let $H$ denote the orthogonal projection from the $L^2$-completion of smooth $(p,q)$-forms on $\tilde{M}$, $L^{p,q}_2(\tilde{M}, \tilde{\omega})$,
 to the harmonic space $\mathbb{H}=\ker\,\Box_{\bp}$. As a corollary of the abstract Hodge decomposition in Proposition \ref{abstractHodge}, we have

 \begin{corollary}\label{hodge} On the universal cover $\tilde{M}$ of the K\"ahler hyperbolic manifold $M$ with
 complete K\"ahler metric $\tilde{\omega}$, there exists a bounded operator $G$ on $L^{p,q}_2(\tilde{M}, \tilde{\omega})$,
 called the Green operator such that
 \begin{equation}
\square_{\overline{\partial}} G = G \square_{\overline{\partial}}=
Id -H,
 HG=G H=0,
\end{equation}
\end{corollary}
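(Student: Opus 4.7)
The plan is to deduce the corollary as a direct specialization of the abstract Hodge framework in Proposition \ref{abstractHodge}, applied to the $\bar\partial$-complex on $\tilde M$, using the inputs already collected in Lemma \ref{L21}.

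First, I would set up the abstract data. Fix bidegree $(p,q)$ and let
$H_1=L^{p,q-1}_2(\tilde M,\tilde\omega),\ H_2=L^{p,q}_2(\tilde M,\tilde\omega),\ H_3=L^{p,q+1}_2(\tilde M,\tilde\omega)$,
with $T=\bar\partial\colon H_1\to H_2$ and $S=\bar\partial\colon H_2\to H_3$ taken as maximal closed extensions. These are densely defined closed operators, $\mathrm{Im}(T)\subset\mathrm{Dom}(S)$, and $ST=\bar\partial^2=0$ on $\mathrm{Dom}(T)$. Their adjoints $T^*=\bar\partial^*$ and $S^*=\bar\partial^*$ then satisfy the dual conditions, and the operator $\Delta$ attached to this triple coincides with the maximal closed extension of $\Box_{\bar\partial}=\bar\partial\bar\partial^*+\bar\partial^*\bar\partial$.

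Next, I would verify the two hypotheses of Proposition \ref{abstractHodge}: closedness of $\mathrm{Im}(\bar\partial)$ at each step, and the characterization of $\Ker\Delta$. Both are already contained in the proof of Lemma \ref{L21}: completeness of $\tilde\omega$ gives $\Ker\Box_{\bar\partial}=\Ker\bar\partial\cap\Ker\bar\partial^*$, and the Gromov spectral estimate $\langle\psi,\Box_{\bar\partial}\psi\rangle\ge\lambda_0^2\|\psi\|^2$ on the orthogonal complement of $\mathbb H^{p,q}=\Ker\Box_{\bar\partial}$ implies that the image of $\bar\partial$ (and of $\bar\partial^*$) is closed, via the standard argument in Theorems 2.1--2.2 of Chapter 2.1.2 of \cite{Ohsawa} cited in Lemma \ref{L21}.

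With these hypotheses in hand, Proposition \ref{abstractHodge} yields self-adjointness of $\Box_{\bar\partial}$, the orthogonal decomposition
$L^{p,q}_2(\tilde M,\tilde\omega)=\mathbb H^{p,q}\oplus\mathrm{Im}(\Box_{\bar\partial})$,
and a well-defined bounded operator
$G=(\Box_{\bar\partial}|_{(\mathbb H^{p,q})^{\perp}})^{-1}(I-H)$,
where $H$ denotes the orthogonal projection onto $\mathbb H^{p,q}$. The identities $\Box_{\bar\partial}G=G\Box_{\bar\partial}=I-H$ follow directly from this definition, and $HG=GH=0$ holds because $G$ takes values in $(\mathbb H^{p,q})^{\perp}$ while $H$ kills this subspace. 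Boundedness, which is asserted but easily sharpened, follows from the spectral gap: $\|G\|\le\lambda_0^{-2}$.

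The only genuinely nontrivial ingredient is closedness of $\mathrm{Im}(\bar\partial)$, and that is precisely where the K\"ahler hyperbolic hypothesis enters through the uniform positive lower bound on $\Box_{\bar\partial}$ off the harmonic space; everything else is bookkeeping with Proposition \ref{abstractHodge}. I would expect no other obstacle, since Lemma \ref{L21} has already done the analytic work and the corollary is essentially a repackaging of its conclusion in the form needed for the operator $T=\bar\partial^*G\partial$ in subsequent sections.
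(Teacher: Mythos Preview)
Your proposal is correct and follows exactly the approach the paper intends: the paper presents this corollary without a separate proof, introducing it simply as ``a corollary of the abstract Hodge decomposition in Proposition \ref{abstractHodge}'', and your argument supplies precisely those details---setting up the $\bar\partial$-complex as the abstract data $(T,S)$, invoking Lemma \ref{L21} for the closed-range hypothesis, and reading off the Green operator from Proposition \ref{abstractHodge}(3).
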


Therefore we have the following quasi-isometry formula in
$L^2$-Hodge theory. Its proof is completely the same as in the case
for compact K\"ahler manifold as given in Section
\ref{Section-Hodgetheory}.
\begin{corollary}\label{quasiisometry}
For $g\in \text{Dom}(\p) \subset L^{p,q}_2(\tilde{M},
\tilde{\omega})$, we have that
\begin{align}
\|\bar{\partial}^* G\partial g\|^2\leq \|g\|^2.\notag
\end{align}
\end{corollary}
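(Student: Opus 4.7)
The plan is to carry out verbatim the chain of identities from the proof of Theorem \ref{theorem-quasi-isometry}, now interpreted in the $L^2$-Hodge framework on $(\tilde{M}, \tilde{\omega})$. All the ingredients are in place: Corollary \ref{hodge} provides a bounded Green operator $G$ on $L^{p,q}_2(\tilde{M}, \tilde{\omega})$ with $\square_{\bar{\partial}} G = G \square_{\bar{\partial}} = I - H$ and $HG = GH = 0$; because $\tilde{\omega}$ is a complete K\"ahler metric, the K\"ahler identity $\square_{\bar{\partial}} = \square_\partial$ remains valid, and the commutation relations $G\bar{\partial} = \bar{\partial} G$, $G\bar{\partial}^* = \bar{\partial}^* G$, together with their $\partial$-analogues, hold on the appropriate domains; and $G$ is non-negative and self-adjoint by its construction as $(\Delta|_{\mathcal{H}^{\bot}})^{-1}(I - P_{\mathcal{H}})$ in Proposition \ref{abstractHodge}.

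Given these inputs, I would first move a $\bar{\partial}^*$ across the inner product to obtain $\|\bar{\partial}^* G \partial g\|^2 = \langle \bar{\partial} \bar{\partial}^* G \partial g,\, G \partial g\rangle$; this step is legitimate because $G\partial g \in \mathrm{Dom}(\square_{\bar{\partial}})$. Next I would expand $\bar{\partial}\bar{\partial}^* = \square_{\bar{\partial}} - \bar{\partial}^*\bar{\partial}$, apply $\square_{\bar{\partial}} G = I - H$, use $HG = 0$, and bring $\bar{\partial}$ back across the pairing to get $\|\bar{\partial}^* G \partial g\|^2 = \langle \partial g,\, G \partial g\rangle - \|\bar{\partial} G \partial g\|^2$. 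Invoking the commutation of $G$ with $\partial$ and $\partial^*$ together with the K\"ahler identity $\square_\partial = \square_{\bar{\partial}}$, I would rewrite
\[
\langle \partial g,\, G \partial g\rangle = \langle g,\, \square_{\bar{\partial}} G g\rangle - \langle \partial^* g,\, G \partial^* g\rangle = \|g\|^2 - \|Hg\|^2 - \langle \partial^* g,\, G \partial^* g\rangle,
\]
and conclude $\|\bar{\partial}^* G \partial g\|^2 \leq \|g\|^2$ using only the non-negativity of $G$.

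The one place where the non-compact setting demands attention is the justification of these integration-by-parts and commutation steps: on $\tilde{M}$ one must verify that each element genuinely lies in the domain of the relevant adjoint operator, since smooth forms with compact support are no longer automatic. I expect this to be the only real obstacle. It is handled by Lemma \ref{L21} and the completeness of $\tilde{\omega}$: the operators $\bar{\partial}$ and $\partial$ are densely defined closed operators with closed range, so Proposition \ref{abstractHodge} applies and $G$ carries $L^{p,q}_2(\tilde{M}, \tilde{\omega})$ into $\mathrm{Dom}(\square_{\bar{\partial}}) \subset \mathrm{Dom}(\bar{\partial}) \cap \mathrm{Dom}(\bar{\partial}^*)$. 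Once these domain conditions are verified, no cutoff or exhaustion argument is required, and the chain of equalities is formally identical to the one used in the proof of Theorem \ref{theorem-quasi-isometry}.
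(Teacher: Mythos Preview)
Your proposal is correct and follows essentially the same approach as the paper: the paper simply states that the proof is ``completely the same as in the case for compact K\"ahler manifold as given in Section \ref{Section-Hodgetheory},'' i.e., the chain of identities from Theorem \ref{theorem-quasi-isometry}. You have reproduced that chain and, if anything, been more explicit than the paper about the domain and commutation issues in the non-compact $L^2$ setting.
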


We consider the operator $$T=\bp^* G\p$$ in $L^2$ Hodge theory. Then
Corollary \ref{quasiisometry} tells us that $T$ is an operator of
norm less than or equal to $1$,  in the Hilbert space of
$L^2$-forms. So we have the following

\bcorollary\label{norm<1} On the complete K\"ahler manifold
$(\tilde{M},\tilde{\omega})$ discussed above, let $\phi$ be a
Beltrami differential acting on the Hilbert space of $L^2$-forms by
contraction such that its $L_\infty$-norm $||\phi||_\infty<1$, then
the operator $I+T\phi$ is invertible. \ecorollary

\subsection{Relation to $L^2$-estimates}
We give a brief discussion of the results in this section to the
$L^2$-estimates of H\"omander. We believe that this point of view is
 more geometric and will be useful in studying the optimal constants
 in $L^2$-estimate problems. This subsection is independent of the
 rest of this article.

 We consider a complete K\"ahler manifold
 $M$ with a holomorphic vector bundle $E$. First note that the same argument of this section, applied to a
bundle version of $L^2$-Hodge theory, shows that
$$\|\bar{\partial}^*G g\|^2= \langle \bar{\partial}^*Gg,\bar{\partial}^*G g\rangle
=\langle\bar{\partial}\bar{\partial}^*G g, G g\rangle\leq \langle
Gg, g\rangle.$$ See \cite{LRY} for the case of compact K\"ahler
manifold. Here we have used the identity
$$\langle\bar{\partial}\bar{\partial}^*G g, G g\rangle
= \langle \Box_{\bp}G g, g\rangle-\langle \bp^*\bp G g, g\rangle =
\langle G g, g\rangle-\langle H g,Hg\rangle - \langle G\bp g, G\bp
g\rangle$$ to derive the above inequality. From this we see that if
$\bp g=0$ and $H g=0$, then $$f= \bp^* Gg$$ solves the equation $\bp
f =g$ with $L^2$-estimate.  From these we can see the interesting relation to the
$L^2$-estimate of H\"ormander in solving the $\bp$-equation $\bp f =
g$.

Furthermore the curvature condition on the vector bundle  $E$ in
H\"ormander's $L^2$-estimate and the Bochner-Kodaira-Nakano identity
imply that
$$\langle \Box_{\bp}\, g, g\rangle \geq \langle Ag, g\rangle$$ with a constant $A>0$  which implies that, if the metric is complete,  $L^2$-Hodge theory holds by the same argument as the proof of Lemma \ref{L21}, and in the meantime gives us the needed $L^2$-estimate
$$\langle G g, g\rangle \leq \langle A^{-1} g, g\rangle.$$
We refer the reader to \cite{Dem} for details about H\"omander's
$L^2$-estimates.



\section{Solving the Beltrami equations}\label{Beltrami}
In this section we will apply $L^2$-Hodge theory on the unit disc to
solve the classical Beltrami equations for quasi-conformal maps as
discussed in \cite{Ahlfors}, \cite{Bojarski} and \cite{Gut}. The
problem of solving the Beltrami equations has a long history in
geometry and analysis, and it has important applications in complex
and analytic geometry.  Many methods have been developed for solving
such equations. For some of its many important applications, see
\cite{Bers}. We will show that Hodge theory method gives a much
simpler way to solve these equations. Our method can be viewed as a
more geometric way to treat these equations.

 Recall that the Beltrami equation is to solve for a function
$f:\ D \to \mathbb{C}$ where $D$ is a unit disc in the complex
plane, such that
$$f_{\bar{z}} = \mu_0 f_{{z}}.$$ Here $z$ is the variable in $D$
and $\mu_0$ is some function on $D$ with $\mathrm{sup}\,|\mu_0|<1$.
Let $\mu = \mu_0 \frac{\p}{\p z} \otimes d\bar{z}$, we can rewrite
the Beltrami equation in our familiar form,
$$\bp f  = \mu\p f.$$
We will assume that $\mu$ is measurable, or equivalently $\mu_0$ is a
measurable function.

Let us consider the unit disc $D$ with the standard Poincar\'e
metric $\omega_P$ of curvature $-1$, which clearly satisfies the
condition in Lemma \ref{L21}. Therefore,  $L^2$-Hodge theory holds
on $(D,\omega_P)$.

 First note that the
$L_\infty$-norm of $\mu$ with the Poincar\'e metric is actually the
same as its $L_\infty$-norm with the Euclidean metric on $D$,
therefore is the same as $\mathrm{sup}\,|\mu_0|$ where the supremum
is taken on the unit disc $D$. Also note that the $L^2$-norm of a
holomorphic one form on $D$ is independent of the Hermitian metric
on $D$. These are crucial for our applications of $L^2$-Hodge theory
on $D$. We have the following result for measurable $\mu$ or
$\mu_0$.
\begin{proposition} Assume that the $L_\infty$-norm of $\mu$, $||\mu||_{\infty}<1$.
Then given any holomorphic one form $h_0$ on $D$, the equation
 \begin{equation}\label{Bel}
 \bp h =- \p \mu h
 \end{equation}
 has a solution
\begin{align} \label{Belsolution}
h =(I + T\mu)^{-1} h_0.
\end{align}
\end{proposition}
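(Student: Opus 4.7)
The approach mirrors the proof of Theorem \ref{solution} in Section \ref{Section-extensionforms}, with the compact K\"ahler setting replaced by the $L^{2}$-Hodge theory on the Poincar\'e disk $(D,\omega_{P})$ developed in Section \ref{Section-L2}. The plan is first to construct $h$ as an $L^{2}$ $(1,0)$-form via Corollary \ref{norm<1}: since $\|\mu\|_{\infty}=\sup|\mu_{0}|<1$ and the operator $T=\bp^{*}G\partial$ has norm $\le 1$ on $L^{2}$-forms (Corollary \ref{quasiisometry}), the operator $I+T\mu$ is invertible on the Hilbert space of $L^{2}$-forms. Setting $h:=(I+T\mu)^{-1}h_{0}$ therefore produces a well-defined $L^{2}$ $(1,0)$-form satisfying the integral equation
\[
h \;=\; h_{0} \,-\, \bp^{*}G\,\partial(\mu\lrcorner h).
\]

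What remains is to verify that this $h$ actually solves the differential equation $\bp h = -\partial(\mu h)$. Define $\Phi:=\bp h+\partial(\mu h)$. Applying $\bp$ to the integral equation and using $\bp h_{0}=0$ (holomorphicity of $h_{0}$), together with the Hodge identity $\bp\bp^{*}G=\square_{\bp}G-\bp^{*}\bp G=(I-H)-\bp^{*}G\bp$ (from the fact that $\bp$ commutes with $G$), I obtain
\[
\Phi \;=\; H\partial(\mu h) \,+\, \bp^{*}G\,\bp\partial(\mu h) \;=\; H\partial(\mu h) \,-\, \bp^{*}G\,\partial\bp(\mu h).
\]
The second term vanishes for free because $\dim_{\C}D=1$: the form $\mu h$ is of type $(0,1)$, so $\bp(\mu h)$ would be a $(0,2)$-form, and there are no nonzero $(0,2)$-forms on a one-complex-dimensional manifold.

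The main (and essentially only) subtle point is showing $H\partial(\mu h)=0$, i.e.\ that the $L^{2}$-harmonic projection annihilates the $(1,1)$-form $\partial(\mu h)$. I would argue this via the K\"ahler identity $\square_{\bp}=\square_{\partial}$, which persists in the $L^{2}$ framework on the complete K\"ahler manifold $(D,\omega_{P})$ set up in Section \ref{Section-L2}. This identity forces every $L^{2}$-harmonic form to be annihilated by $\partial^{*}$, hence $\mathrm{Im}(\partial)\perp\ker\square_{\bp}$, which gives $H\partial(\mu h)=0$ since $\partial(\mu h)\in\mathrm{Im}(\partial)$. (Equivalently, Gromov's vanishing theorem for $L^{2}$-harmonic forms on the universal cover of a K\"ahler hyperbolic manifold implies that $L^{2}$-harmonic $(1,1)$-forms on $(D,\omega_{P})$ vanish outright, so $H\equiv 0$ on $(1,1)$-forms.) Either way $\Phi=0$, and the proposition follows. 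The technical heart of the argument is thus not computational but conceptual: making sure the K\"ahler-type orthogonality relations from the compact theory carry over to the $L^{2}$-Hodge theory on the noncompact Poincar\'e disk.
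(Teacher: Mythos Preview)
Your argument is correct and coincides with the paper's ``direct method'': both set $h=(I+T\mu)^{-1}h_0$, use the one-dimensionality of $D$ to kill the would-be $(0,2)$ (equivalently $(1,2)$) term, and use $H\partial=0$ to finish. If anything you are more careful than the paper, which writes $(I-H)\partial\mu h=\partial\mu h$ without comment, whereas you justify $H\partial(\mu h)=0$ via the K\"ahler identity $\square_{\bp}=\square_{\partial}$ (so $L^{2}$-harmonic forms lie in $\ker\partial^{*}$).
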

\bproof

The proof of Proposition \ref{Bel} can be given by using Corollary
\ref{norm<1} in the same way as in the proof of Proposition
\ref{prop1-section} as shown in Section
\ref{Section-extensionforms}. Here we will give another direct
method to show that $(\ref{Belsolution})$ is the solution to
equation (\ref{Bel}).

Plug $h= h_0+h_1$ into the equation (\ref{Bel}), then
$$\bp h_1 = -\p \mu  h_0-\p \mu h_1.$$ Here $\mu$ as above is a simplified notation for the contraction operator $\mu\lrcorner$. Applying operator $\bp^*G$ and Hodge theory to both
sides, we get
$$h_1 -Hh_1 = -T\mu h_0-T\mu h_1.$$

We consider the equation by assuming $Hh_1=0$,
$$(I+T\mu)h_1 = -T\mu h_0.$$
By Corollary \ref{norm<1}, we know that when the $L_\infty$-norm of
$\mu$, $||\mu||_\infty< 1$, the operator $(I+T\mu)$ is invertible.
Let
$$h_1 = -(I+T\mu)^{-1} T\mu h_0.$$Then it is an easy exercise to check directly that
$h= h_0 +h_1$ is the same as formula (\ref{Belsolution}) which
gives a solution of Equation (\ref{Bel}) which is a closed formula.

Indeed, since $h=h_0+h_1$ is a $(1,0)$-form, and $ (I+T\mu)h_1
=-T\mu h_0$ which is $$h_1 = -T\mu (h_0+h_1)=-T\mu h.$$ We have
$$\bp T\mu h= \bp \bp^*G\p \mu h= \Box_{\bp} G \p \mu h=(I-H) \p \mu h=\p \mu h.$$
Then
$$\bp h_1 = -\p \mu (h_0+h_1)= -\p\mu h_0 -\p\mu h_1$$ which can be rewritten as
$$\bp h = -\p \mu h.$$
 \eproof

\btheorem  Assume that $||\mu||_\infty=\mathrm{sup}\, |\mu_0|<1$, if
$\mu_0$ is of regularity $C^k$,  then the Beltrami equation
$$\bp f  = \mu\p f$$ has a solution $w$ of regularity $C^{k+1}$.
\etheorem
\begin{proof}
By Proposition \ref{Bel}, we can find a $(1,0)$-form $h$ of
regularity $C^k$ on $D$, which satisfies the equation
\begin{align*}
\bar{\partial}h=-\partial \mu h,
\end{align*}
by noting that the Green operator maps a $C^k$ form to a $C^{k+2}$
form.

It follows that
\begin{align*}
d(e^{i_{\mu}}h)=0.
\end{align*}

According to Poincar\'e lemma, there is a function $w$ of regularity
$C^{k+1}$ on $D$, such that
\begin{align*}
e^{i_{\mu}}h=dw=\bar{\partial} w+\partial w.
\end{align*}
Since
\begin{align*}
e^{i_{\mu}}h=h+\mu h,
\end{align*}
by considering the types, we obtain
\begin{align*}
h=\partial w \ \text{and}\  \mu  h=\bp w.
\end{align*}
Therefore
\begin{align*}
\bp w=\mu  \p w.
\end{align*}
\end{proof}

\section{Generalizations and open questions}\label{general}

Although we only consider holomorphic $(n,0)$-forms in this paper,
our method also works for a general $(p,q)$-form $\sigma_0$ with
$d\sigma_0=0$. In this case, the equation for an extension is
$$d\sigma  -\sL^{1,0}_{\phi}\sigma=0$$ as follows from the formula in Corollary \ref{commuted1}.
Our method reduces the equation to the system of equations,

\begin{eqnarray}\label{induction}
\begin{cases}
\p \sigma=0,\\
\bp \sigma  =-\p(\phi\lrcorner \sigma),
\end{cases}
\end{eqnarray}
and $\rho(\sigma)=e^{i_\phi} \sigma$ gives an extension of
$\sigma_0$ as $d$ closed forms on $M_\varphi$. The closed formula we
derive for the extensions of holomorphic forms applies equally well
to $(p,q)$-forms. From this one can give a direct proof of the
invariance of Hodge numbers for deformations of compact K\"ahler
manifolds, by showing that any $d$-closed $(p,q)$-form always has an
extension by using Hodge theory. From this one may also derive a
simpler proof of the stability of the deformation of K\"ahler
manifolds. One may compare this approach with \cite{RWZ}. A related challenging problem is
to prove the invariance of plurigenera for compact K\"ahler
manifolds \cite{Siu} by using the method in Section
\ref{Section-pluricanonical}, we leave the further discussion of
this topic for another paper.

For the case of complete K\"ahler manifold with Poincar\'e type
metric on a quasi-projective manifold, the results of \cite{CKS87}
and \cite{KK87} tells us that $L^2$ Hodge decomposition theory
holds, therefore we also have similar closed explicit formulas for extensions
of $(p,q)$-forms similar to the formulas for compact K\"ahler
manifold. In particular for Riemann surface with punctures and
Poincar\'e metric of constant negative curvature $-1$, our method
gives closed formulas for extensions and global sections of
holomorphic one forms on the Teichm\"uller or Torelli space of
punctured Riemann surfaces.

Since the Green operator is an integral operator, we may also apply
the Hodge theory or $L^2$-Hodge theory method as discussed in this
paper to study various integral formulas and extension formulas in
complex analysis, which will supply a unifying geometric treatment
of some classical formulas. This will be another interesting topic
we hope to study in the future.

\end{document}